\newtheorem{theorem}[equation]{Theorem}
\newtheorem{lemma}[equation]{Lemma}
\newtheorem{proposition}[equation]{Proposition}
\newtheorem{corollary}[equation]{Corollary}
\newtheorem{conjecture}[equation]{Conjecture}
\newtheorem{definition}[equation]{Definition}
\newtheorem{theorem-n}{Theorem}
\newtheorem{claim-n}[theorem-n]{Claim}
\newtheorem{lemma-n}[theorem-n]{Lemma}
\newtheorem{proposition-n}[theorem-n]{Proposition}
\theoremstyle{definition}
\newtheorem{definition-n}[theorem-n]{Definition}
\newtheorem{example}[equation]{Example}
\theoremstyle{remark}
\newtheorem{remark}[equation]{Remark}
\newtheorem{remark-n}[theorem-n]{Remark}
\numberwithin{equation}{subsection}
\newcommand{\FF}{\mathbb{F}}
\newcommand{\ZZ}{\mathbb{Z}}
\newcommand{\QQ}{\mathbb{Q}}
\newcommand{\RR}{\mathbb{R}}
\newcommand{\LL}{\mathbb{L}}
\newcommand{\TT}{\mathbb{T}}
\newcommand{\CC}{\mathbb{C}}
\newcommand{\NN}{\mathbb{N}}
\newcommand{\Fq}{\mathbb{F}_{q}}
\newcommand{\Fp}{\mathbb{F}_{p}}
\newcommand{\ba}{\mathbf{a}}
\newcommand{\bb}{\mathbf{b}}
\newcommand{\bff}{\mathbf{f}}
\newcommand{\bg}{\mathbf{g}}
\newcommand{\bn}{\mathbf{n}}
\newcommand{\bv}{\mathbf{v}}
\newcommand{\bone}{\mathbf{1}}
\newcommand{\cB}{\mathcal{B}}
\newcommand{\cE}{\mathcal{E}}
\newcommand{\cH}{\mathcal{H}}
\newcommand{\cI}{\mathcal{I}}
\newcommand{\cP}{\mathcal{P}}
\newcommand{\cL}{\mathcal{L}}
\newcommand{\cZ}{\mathcal{Z}}
\newcommand{\IT}{\mathcal{I}^{\mathrm{T}}}
\newcommand{\ITw}{\mathcal{I}^{\mathrm{T}}_{w}}
\newcommand{\IND}{\mathcal{I}^{\mathrm{ND}}}
\newcommand{\INDz}{\mathcal{I}^{\mathrm{ND}_{0}}}
\newcommand{\INDw}{\mathcal{I}^{\mathrm{ND}}_{w}}
\newcommand{\INDzw}{\mathcal{I}^{\mathrm{ND}_{0}}_{w}}
\newcommand{\sI}{\mathscr{I}}
\newcommand{\sL}{\mathscr{L}}
\newcommand{\sLL}{\mathscr{L}^{\Li}}
\newcommand{\sLZ}{\mathscr{L}^{\zeta}}
\newcommand{\sLB}{\mathscr{L}^{\bullet}}
\newcommand{\sDB}{\mathscr{D}^{\bullet}}
\newcommand{\sz}{*^{\zeta}}
\newcommand{\sLi}{*^{\Li}}
\newcommand{\SB}{S^{\bullet}}
\DeclareMathAlphabet{\matheur}{U}{eur}{m}{n}
\newcommand{\eR}{\matheur{R}}
\newcommand{\fs}{\mathfrak{s}}
\newcommand{\fn}{\mathfrak{n}}
\newcommand{\fu}{\mathfrak{u}}
\newcommand{\sB}{\mathscr{B}}
\newcommand{\sC}{\mathscr{C}}
\newcommand{\sR}{\mathscr{R}}
\newcommand{\sU}{\mathscr{U}}
\newcommand{\sX}{\mathscr{X}}
\newcommand{\sUB}{\mathscr{U}^{\bullet}}
\newcommand{\sBC}{\mathscr{BC}}
\newcommand{\rT}{\mathrm{T}}
\DeclareMathOperator{\Ker}{Ker} \DeclareMathOperator{\GL}{GL}
\DeclareMathOperator{\Mat}{Mat}
\DeclareMathOperator{\Id}{Id} 
 \DeclareMathOperator{\wt}{wt}
\DeclareMathOperator{\Li}{Li}
\DeclareMathOperator{\dep}{dep}
\DeclareMathOperator{\Span}{Span}
\DeclareMathOperator{\Frac}{Frac}
\DeclareMathOperator{\Supp}{Supp}
\DeclareMathOperator{\Init}{Init}
\newcommand{\ok}{\overline{k}}
\newcommand{\tr}{\mathrm{tr}}
\newcommand{\tpi}{\widetilde{\pi}}
\newcommand{\power}[2]{{#1 [\![ #2 ]\!]}}
\newcommand{\laurent}[2]{{#1 (\!( #2 )\!)}}
\definecolor{ForestGreen}{rgb}{0.0, 0.5, 0.0}
\newcommand{\xequal}[2][]{\ext@arrow 0055{\equalfill@}{#1}{#2}}
\def\equalfill@{\arrowfill@\Relbar\Relbar\Relbar}
\title [On Thakur's basis conjecture for  multiple zeta values]{On Thakur's basis conjecture for multiple zeta values in positive characteristic}
\author{Chieh-Yu Chang}
\address{Department of Mathematics, National Tsing Hua University, Hsinchu City 30042, Taiwan
  R.O.C.}
\email{cychang@math.nthu.edu.tw}
\author{Yen-Tsung Chen}
\address{Department of Mathematics, National Tsing Hua University, Hsinchu City 30042, Taiwan
  R.O.C.}
\email{ytchen.math@gmail.com}
\author{Yoshinori Mishiba}
\address{Department of Mathematical Sciences, University of the Ryukyus, 1 Senbaru, Nishihara-cho, Okinawa 903-0213,  Japan}
\email{mishiba@sci.u-ryukyu.ac.jp}
\thanks{The firs and second authors are partially supported by MOST Grant 107-2628-M-007-002-MY4. The third author was supported by JSPS KAKENHI Grant Number JP18K13398.}
\keywords{Multiple zeta values, Carlitz multiple polylogarithms, Thakur's basis conjecture, Todd's dimension conjecture}
\subjclass[2010]{Primary 11R58, 11J93}
\date{\today}
\begin{document}

\maketitle

\begin{abstract}
In this paper, we study multiple zeta values (abbreviated as MZV's) over function fields in positive characteristic. Our main result is to prove Thakur's basis conjecture, which plays  the analogue of Hoffman's basis conjecture for real MZV's. As a consequence, we derive Todd's dimension conjecture, which is the analogue of Zagier's dimension conjecture for classical real MZV's. 
\end{abstract}

\section{Introduction}

\subsection{Classical conjectures}  In this paper, we study  multiple zeta values (abbreviated as MZV's) over function fields in positive characteristic introduced by Thakur~\cite{T04}. Our motivation arises from Zagier's dimension conjecture and Hoffman's basis conjecture for classical real MZV's.  

The special value of the Riemann $\zeta$-function at positive integer $s\geq 2$ is the following series
\[\zeta(s):=\sum_{n=1}^{\infty} \frac{1}{n^{s}}\in \RR^{\times}.  \]
Classical real MZV's are generalizations of the special $\zeta$-values above. It was initiated by Euler on double zeta values and fully generalized by Zagier~\cite{Za94} in the 1990s. An {\it{admissible index}} is an $r$-tuple of positive integer $\fs=(s_{1},\ldots,s_{r})\in \ZZ_{>0}^{r}$ with $s_{1}\geq 2$.  The real MZV at $\fs$ is defined by the following multiple series
\[
\zeta(\fs):=\sum_{n_{1}> \cdots > n_{r}\geq 1}  \frac{1 }{n_{1}^{s_{1}} \cdots n_{r}^{s_{r}} } \in \RR^{\times}. \]
We call $\wt(\fs):=\sum_{i=1}^{r}s_{i}$ and $\dep(\fs):=r$ the weight and  depth of the presentation $\zeta(\fs)$ respectively.  Over the past decades, the study of real MZV's has attracted many researchers' attention as MZV's have many interesting and important connections with various topics. For example, MZV's occur as periods of mixed Tate motives  by Terasoma~\cite{Te02}, Goncharov~\cite{Gon02} and Deligne-Goncharov~\cite{DG05}, and MZV's of depth two have close connection with modular forms by Gangl-Kaneko-Zagier~\cite{GKZ06} etc. For more details and relevant references, we refer the reader to the  books~\cite{An04, Zh16, BGF19}. 

By the theory of regularized double shuffle relations~\cite{R02, IKZ06}, there are rich $\QQ$-linear relations among the same weight MZV's. One core problem on this topic is Zagier's following dimension conjecture: 

\begin{conjecture}[Zagier's dimension conjecture]\label{Con:Zagier} For an integer $w\geq 2$, we let $\mathfrak{Z}_w$ be the $\QQ$-vector space spanned by real  MZV's of weight $w$. We put  $d_{0}:=1$, $d_{1}:=0$, $d_{2}:=1$ and $d_{w}:=d_{w-2}+d_{w-3}$ for integers $w\geq 3$. Then for each integer $w\geq 2$, we have 
\[ {\rm{dim}}_{\QQ}\mathfrak{Z}_{w}=d_{w} . \]
\end{conjecture}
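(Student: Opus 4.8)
The plan is to prove Conjecture~\ref{Con:Zagier} by lifting it to a statement about \emph{motivic} MZV's, where the Tannakian formalism forces the right numerology, and then to supply the unavoidable transcendence input that remains. First I would, following Goncharov and Deligne--Goncharov \cite{Gon02, DG05}, pass to the pro-unipotent fundamental groupoid of $\PP^1_{\QQ}\setminus\{0,1,\infty\}$ with the two standard tangential base points; this underlies a pro-object of the Tannakian category $\mathrm{MT}(\ZZ)$ of mixed Tate motives over $\ZZ$, and its de Rham matrix coefficients, filtered by the motivic weight, generate a graded $\QQ$-algebra $\mathfrak{Z}^{\mathrm{mot}}=\bigoplus_{w\ge 0}\mathfrak{Z}^{\mathrm{mot}}_w$ together with motivic elements $\zeta^{\mathrm{mot}}(\fs)$ for each admissible index $\fs$. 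The Betti--de Rham comparison gives a surjective $\QQ$-linear period map $\mathrm{per}\colon \mathfrak{Z}^{\mathrm{mot}}_w \to \mathfrak{Z}_w$ with $\mathrm{per}(\zeta^{\mathrm{mot}}(\fs))=\zeta(\fs)$, so it is enough to prove (a) $\dim_\QQ \mathfrak{Z}^{\mathrm{mot}}_w=d_w$ and (b) $\mathrm{per}$ is injective.

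For (a), the motivic Galois group of $\mathrm{MT}(\ZZ)$ is an extension of $\Gm$ by a pro-unipotent group $U$, and by Borel's computation of $K_{2n-1}(\ZZ)\otimes\QQ$ (rank $1$ for odd $n\ge 3$, $0$ otherwise) together with the known Beilinson--Soul\'e vanishing for $\ZZ$ one gets $\dim_\QQ \Ext^1_{\mathrm{MT}(\ZZ)}(\QQ(0),\QQ(n))=1$ for odd $n\ge 3$, $=0$ otherwise, and $\Ext^{\ge 2}=0$; hence $\mathrm{Lie}(U)$ is free on one generator in each odd degree $\ge 3$. The Poincar\'e--Birkhoff--Witt count of Deligne--Goncharov and Goncharov \cite{DG05, Gon02} then bounds the Hilbert series of $\mathfrak{Z}^{\mathrm{mot}}$ termwise by $(1-t^2-t^3)^{-1}$, that is $\dim_\QQ \mathfrak{Z}^{\mathrm{mot}}_w\le d_w$; composing with the surjection $\mathrm{per}$ already yields the \emph{unconditional} upper bound $\dim_\QQ\mathfrak{Z}_w\le d_w$ of Terasoma \cite{Te02} and Deligne--Goncharov. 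To upgrade this to equality one invokes Brown's theorem that the $\zeta^{\mathrm{mot}}(s_1,\dots,s_r)$ with all $s_i\in\{2,3\}$ and weight $w$ --- of which there are exactly $d_w$ --- span $\mathfrak{Z}^{\mathrm{mot}}_w$; being $d_w$ generators of a space of dimension $\le d_w$, they form a basis, so $\dim_\QQ\mathfrak{Z}^{\mathrm{mot}}_w=d_w$. That spanning statement is proved by induction on the weight via the motivic coaction and uses no transcendence.

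It remains to prove (b), equivalently the lower bound $\dim_\QQ\mathfrak{Z}_w\ge d_w$, and \emph{this is the step where I expect to be stuck}. Injectivity of $\mathrm{per}$ says exactly that the period pairing on $\mathrm{MT}(\ZZ)$ is perfect --- a special case of Grothendieck's period conjecture --- and it implies, among much else, the $\QQ$-algebraic independence of $\pi,\zeta(3),\zeta(5),\zeta(7),\dots$; at present not a single one of $\zeta(5),\zeta(7),\dots$ is even known to be irrational, and no existing transcendence method (Baker--W\"ustholz, Siegel--Shidlovskii, the theory of $E$- and $G$-functions) comes close. The only conceivable alternative is to show that the regularized double shuffle relations \cite{R02, IKZ06} generate \emph{all} $\QQ$-linear relations among weight-$w$ MZV's and that the ensuing purely combinatorial dimension equals $d_w$ --- but this is itself a pair of open problems and, being about genuine identities among real numbers, carries the same transcendence content. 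It is precisely the availability of the positive-characteristic analogue of the missing input --- the linear independence criterion of Anderson--Brownawell--Papanikolas together with Papanikolas' Tannakian theory of $t$-motives, which make the relevant motivic Galois groups explicitly computable --- that allows Todd's dimension conjecture, the function-field counterpart of Conjecture~\ref{Con:Zagier}, to be established unconditionally; that is the content of the present paper.
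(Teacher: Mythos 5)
The statement you were asked to prove is Conjecture~\ref{Con:Zagier} itself, which the paper does not prove and indeed cannot: it is stated purely as classical motivation for the function-field results, and it remains an open problem. So there is no proof in the paper to compare yours against. Your writeup is an accurate survey of the state of the art rather than a proof, and you correctly say so yourself.

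Your reduction is the standard and correct one: the motivic upper bound $\dim_\QQ\mathfrak{Z}^{\mathrm{mot}}_w\le d_w$ via the structure of the motivic Galois group of $\mathrm{MT}(\ZZ)$ (Borel's $K$-theory computation giving a free Lie algebra with one generator in each odd degree $\ge 3$), Brown's theorem that the Hoffman motivic elements span and hence form a basis, and the surjectivity of the period map giving $\dim_\QQ\mathfrak{Z}_w\le d_w$ unconditionally. The genuine gap is exactly where you locate it: injectivity of the period map, equivalently the lower bound $\dim_\QQ\mathfrak{Z}_w\ge d_w$, is a special case of Grothendieck's period conjecture and is far beyond current transcendence technology (even the irrationality of $\zeta(5)$ is unknown). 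Your closing remark is also the right way to understand the present paper: in positive characteristic the missing transcendence input is supplied by the ABP criterion (Theorem~\ref{T:ABP} here), which is why Theorem~\ref{T:Main Thm} establishes the analogues (Conjectures~\ref{Todd's Conj} and~\ref{Thakur's Conj}) unconditionally, while the classical Conjecture~\ref{Con:Zagier} stays open.
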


The best known result towards Zagier's dimension conjecture until now has been the {\it{upper bound}} result proved by Terasoma~\cite{Te02} and Goncharov~\cite{Gon02} independently. Namely, they showed that ${\rm{dim}}_{\QQ}\mathfrak{Z}_{w}\leq d_{w}$ for all integers $w\geq 2$. Due to numerical computation, Hoffman~\cite{Ho97} proposed the following conjectural basis for $\mathfrak{Z}_{w}$ for each $w\geq 2$.

\begin{conjecture}[Hoffman's basis conjecture]\label{Con:Hoffman} For an integer $w\geq 2$, we let $\mathcal{I}_{w}^{\rm{H}}$ be the set of admissible indices $\fs=(s_{1},\ldots,s_{r})$ with $s_{i}\in \left\{2,3 \right\}$ satisfying $\wt(\fs)=w$. Then the following set 
\[\mathcal{B}_{w}^{\rm{H}}:=\left\{ \zeta(\fs)|\fs\in \mathcal{I}_{w}^{\rm{H}}  \right\}\]
is a basis of the $\QQ$-vector space $\mathfrak{Z}_{w}$.
\end{conjecture}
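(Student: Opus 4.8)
The plan is to factor Hoffman's conjecture into an elementary counting input, a \emph{spanning} statement, and a \emph{linear independence} statement, routing the latter two through the category of mixed Tate motives over $\ZZ$ so that the only genuinely missing ingredient ends up isolated in one place.

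\textbf{Step 1: reduce to linear independence.} First I would record the combinatorics: every composition of $w$ into parts lying in $\{2,3\}$ is automatically admissible (each part is $\geq 2$), and conditioning on the last part shows that the number $c_w$ of such compositions satisfies $c_w = c_{w-2} + c_{w-3}$ for $w \geq 3$ with $c_0 = 1$, $c_1 = 0$, $c_2 = 1$; this is the defining recursion for $d_w$, so $|\mathcal{I}_w^{\rm H}| = c_w = d_w$ and $\mathcal{B}_w^{\rm H}$ has exactly $d_w$ elements. Combined with the Goncharov--Terasoma inequality $\dim_\QQ \mathfrak{Z}_w \leq d_w$ recalled above, the conjecture becomes equivalent to the single assertion that $\mathcal{B}_w^{\rm H}$ is $\QQ$-linearly independent (a linearly independent set of $d_w$ elements inside a space of dimension $\leq d_w$ is automatically a basis); once the spanning statement of Step 2 is available this is in turn equivalent to the lower bound $\dim_\QQ \mathfrak{Z}_w \geq d_w$.

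\textbf{Step 2: the motivic spanning statement.} Let $\cH = \bigoplus_{w \geq 0} \cH_w$ be the graded $\QQ$-algebra of motivic MZV's attached to mixed Tate motives over $\ZZ$, with motivic lifts $\zeta^{\mathrm{m}}(\fs) \in \cH_{\wt(\fs)}$, and let $\mathrm{per}\colon \cH \twoheadrightarrow \RR$ be the period homomorphism $\zeta^{\mathrm{m}}(\fs) \mapsto \zeta(\fs)$, whose image on $\cH_w$ is $\mathfrak{Z}_w$. By Brown's theorem on mixed Tate motives over $\ZZ$, the classes $\zeta^{\mathrm{m}}(\fs)$ with $\fs \in \mathcal{I}_w^{\rm H}$ form a \emph{basis} of $\cH_w$; applying the surjection $\mathrm{per}$ shows that $\mathcal{B}_w^{\rm H}$ spans $\mathfrak{Z}_w$ (which by itself re-proves $\dim_\QQ \mathfrak{Z}_w \leq d_w$). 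Thus at the motivic level Hoffman's conjecture is a theorem, and what separates it from the classical statement is precisely whether $\mathrm{per}|_{\cH_w}$ is injective.

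\textbf{Step 3: linear independence --- the main obstacle.} By Steps 1 and 2 the conjecture is equivalent to the injectivity of $\mathrm{per}|_{\cH_w}\colon \cH_w \to \RR$ in every weight, i.e.\ to the exact-dimension form of Zagier's conjecture, which is an instance of Grothendieck's period conjecture for the mixed Tate motives generated by MZV's; this is where the proof cannot presently be closed. The two avenues I would pursue are: (i) promote Brown's combinatorial model for $\cH$ --- the motivic coaction together with the (noncanonical) isomorphism with a shuffle algebra on generators $f_3, f_5, f_7, \dots$ --- to a statement about genuine real numbers, which would require new transcendence information about periods of mixed Tate motives; or (ii) develop over $\QQ$ a linear independence criterion analogous to the Anderson--Brownawell--Papanikolas criterion for $t$-motives, it being exactly such a criterion that makes Thakur's positive-characteristic analogue of this conjecture provable (the subject of the present paper), while it is its absence over $\QQ$ that leaves the classical statement out of reach. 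In short, Steps 1 and 2 are unconditional, and the entire difficulty is concentrated in the linear-independence half of Step 3.
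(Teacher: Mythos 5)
You have not proved the statement, and indeed no proof is possible to compare against: in the paper this is Conjecture~\ref{Con:Hoffman}, recorded purely as motivation for the function-field results, and it remains open classically. Your Steps 1 and 2 are correct and are exactly the state of the art that the paper itself recalls: the count $|\mathcal{I}_{w}^{\rm H}|=d_{w}$ via the recursion $c_{w}=c_{w-2}+c_{w-3}$ is right, Brown's theorem does give that the motivic lifts $\zeta^{\mathrm{m}}(\fs)$, $\fs\in\mathcal{I}_{w}^{\rm H}$, form a basis of the weight-$w$ piece of the motivic MZV algebra, and applying the period map yields that $\mathcal{B}_{w}^{\rm H}$ spans $\mathfrak{Z}_{w}$ and recovers the Terasoma--Goncharov bound $\dim_{\QQ}\mathfrak{Z}_{w}\leq d_{w}$. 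But the conjecture is precisely the assertion that these $d_{w}$ real numbers are $\QQ$-linearly independent, i.e.\ that the period map is injective in each weight, and your Step 3 explicitly concedes that this is not established; it is an instance of the period conjecture for mixed Tate motives over $\ZZ$, for which no analogue of the ABP criterion (Theorem~\ref{T:ABP}) exists over $\QQ$. So the genuine gap is the entire linear-independence half of the statement, which you have correctly isolated but not closed.

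Your comparison with the positive-characteristic situation is apt and matches the logic of the present paper: it is exactly the availability of the ABP criterion, applied in Sections~\ref{Sec:FDE} and~\ref{Sec:Linear Indep} to the system \eqref{eq-Frob-w}, that lets the authors prove the linear-independence half of Thakur's analogue (Theorem~\ref{theorem-IND-basis}) and hence Theorem~\ref{T:Main Thm}, while the classical Conjecture~\ref{Con:Hoffman} stays conjectural. Had the task been to prove Thakur's conjecture, your outline would be the right skeleton; for Hoffman's conjecture itself, what you have written is an accurate survey of known reductions, not a proof.
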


We mention that Hoffman's basis conjecture implies Zagier's dimension conjecture. In~\cite{Br12}, Brown proved  Hoffman's basis conjecture for {\it{motivic}} MZV's. As there is a surjective map from motivic MZV's to real MZV's, Brown's theorem implies that Hoffman's conjectural basis would be a generating set for $\mathfrak{Z}_{w}$, and as a consequence the upper bound result of Terasoma and Goncharov would be derived. Our main results in this paper are to prove the analogues of Conjecture~\ref{Con:Zagier} and Conjecture~\ref{Con:Hoffman} in the function fields setting. 

\subsection{The main results}
In the positive characteristic setting, we let $A:=\FF_{q}[\theta]$ the polynomial ring in the variable $\theta$ over a finite field $\FF_{q}$ of $q$ elements, where $q$ is a power of a prime number $p$. We let $k$ be the field of fractions of $A$, and $|\cdot|_{\infty}$ be the normalized absolute value on $k$ at the infinite place $\infty$ of $k$ for which $|\theta|_{\infty}=q$.  Let $k_{\infty}:=\laurent{\FF_{q}}{1/\theta}$ be the completion of $k$ with respect to $|\cdot|_{\infty}$. We then fix an algebraic closure $\overline{k_{\infty}}$ of $k_{\infty}$ and still denote by $|\cdot|_{\infty}$ the extended absolute value on $\overline{k_{\infty}}$. We let $\CC_{\infty}$ be the  completion of $\overline{k_{\infty}}$ with respect to $|\cdot|_{\infty}$. Finally, we let $\ok$ be the algebraic closure of $k$ inside $\CC_{\infty}.$

Let $A_{+}$ be the set of monic polynomials of $A$, which plays an analogous role to the set of positive integers now in the function field setting. In~\cite{T04}, Thakur introduced the following positive characteristic MZV's: for any $r$-tuple of positive integers $\fs=(s_{1},\ldots,s_{r})\in \ZZ_{>0}^{r}$, 
\begin{equation}\label{E:MZV's}
    \zeta_{A}(\fs):=\sum_{a_{1},\cdots, a_{r}\in A_{+}}  \frac{1 }{a_{1}^{s_{1}} \cdots a_{r}^{s_{r}} } \in k_{\infty}
\end{equation}
with the restriction that $|a_{1}|_{\infty}>\cdots >|a_{r}|_{\infty}$. As our absolute value $|\cdot|_{\infty}$ is non-archimedean, the series $\zeta_{A}(\fs)$ converges in $k_{\infty}$. However, Thakur~\cite{T09} showed that such series are in fact non-vanishing. The weight and the  depth of the presentation $\zeta_{A}(\fs)$ are defined to be  $\wt(\fs):=\sum_{i=1}^{r}s_{i}$ and $\dep(\fs):=r$  respectively. When $\dep(\fs)=1$, these values are called Carlitz zeta values as initiated by Carlitz in~\cite{Ca35}.

As from now on we focus on MZV's in positive characteristic, in what follows MZV's will be Thakur's ($\infty$-adic) MZV's. In~\cite{T10}, Thakur established a product on MZV's, which is called $q$-shuffle relation in this paper. Namely, the product of two MZV's can be expressed as an $\FF_{p}$-linear combination of MZV's whose weights are the same.  It follows that the $k$-vector space $\cZ$ spanned by all MZV's form an algebra. 

Note that $\ok$-algebraic relations among MZV's are $\ok$-linear relations among monomials of MZV's, which can be expressed as $\ok$-linear relations among MZV's by $q$-shuffle relations mentioned above. In~\cite{C14}, the first named author of the present paper proved that all $\ok$-linear relations come from $k$-linear relations among MZV's of the same weight. Therefore, determination of the dimension of the $k$-vector space of MZV's of weight $w$ for $w\in \ZZ_{>0}$ is the central problem in this topic. Via numerical computation, Todd~\cite{To18} provided the following analogue of Zagier's dimension conjecture. 
 
\begin{conjecture}[Todd's dimension conjecture]\label{Todd's Conj} For any positive integer $w$, let $\cZ_{w}$ be the $k$-vector space spanned by all  MZV's of weight $w$. Define
\[
d_{w}' =
\begin{cases}
2^{w-1} & \textnormal {if $1\leq w < q$}, \\
2^{q-1}-1 & \textnormal{if $w=q$},\\
 \sum_{i=1}^{q} d'_{w-i} & \textnormal{if $w >q$}.
\end{cases}
\]
Then one has
\[ \dim_{k} \cZ_{w}=d_{w}'.\]
\end{conjecture}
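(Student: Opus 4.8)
The plan is to prove this conjecture as a corollary of Thakur's basis conjecture (the main theorem of the paper). Recall that Thakur's basis conjecture should assert that for each $w$ the set $\{\zeta_A(\fs) \mid \fs \in \mathcal{T}_w\}$ is a $k$-basis of $\cZ_w$, where $\mathcal{T}_w$ is the explicit set of admissible indices $\fs=(s_1,\ldots,s_r)$ with $1 \le s_i \le q$ (and with the first entry subject to the admissibility condition $s_1 \ge 2$, matching the classical Hoffman picture where one forbids a leading $1$). Granting this, the dimension $\dim_k \cZ_w$ equals the cardinality $|\mathcal{T}_w|$, so the entire problem reduces to a purely combinatorial count: showing that the number of tuples $(s_1,\ldots,s_r)$ of arbitrary length $r \ge 1$ with each $s_i \in \{1,\ldots,q\}$, leading entry $s_1 \ne 1$, and $\sum s_i = w$, satisfies the recursion defining $d_w'$.

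The main step is therefore to set up and solve this generating-function count. First I would let $c_w$ denote the number of \emph{all} compositions of $w$ into parts from $\{1,\ldots,q\}$ (no restriction on the leading part); its generating function is $\sum_{w\ge 0} c_w x^w = 1/(1 - (x + x^2 + \cdots + x^q))$, which immediately gives $c_w = \sum_{i=1}^q c_{w-i}$ for $w \ge 1$ with $c_0 = 1$. Then the admissible count $d_w' = |\mathcal{T}_w|$ is obtained by removing those compositions that start with a part equal to $1$: such a composition is a $1$ followed by an arbitrary composition of $w-1$, so $d_w' = c_w - c_{w-1}$ for $w \ge 1$ (with the convention $c_{-1}=0$, giving $d_1' = c_1 - c_0 = 1 - 1 = 0$... so one must instead track that the length-one composition $(1)$ is excluded but contributes nothing to any $\cZ_w$ anyway; I would double-check the edge cases against the stated values $d_1' = 1, d_2' = 2$, which suggests the correct bookkeeping is $d_w' = c_w$ for the \emph{weighted} count where $w \ge 2$ uses admissible tuples but small $w$ is handled separately — I will reconcile the indexing carefully so that the three cases in Todd's formula fall out). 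The key verification is then that $c_w - c_{w-1}$ (or the correctly-indexed variant) satisfies $2^{w-1}$ for $1 \le w < q$, equals $2^{q-1} - 1$ at $w = q$, and obeys $\sum_{i=1}^q d'_{w-i}$ for $w > q$; the first follows because for $w < q$ every composition into parts $\le q$ is simply \emph{every} composition of $w$ (there are $2^{w-1}$ of them), the boundary value at $w=q$ is the single exceptional case where the part of size $q$ first becomes relevant and where we lose the forbidden leading-$1$ count of exactly $1$, and the recursion for large $w$ is inherited directly from the recursion for $c_w$.

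The genuinely hard input — the place where all the representation-theoretic and transcendence-theoretic machinery of the paper lives — is of course Thakur's basis conjecture itself, i.e. establishing both linear independence of $\{\zeta_A(\fs) : \fs \in \mathcal{T}_w\}$ (via Anderson–Thakur / $t$-motivic methods and a positive-characteristic analogue of Brown's motivic argument) and the fact that these values span $\cZ_w$ (via the $q$-shuffle relations producing enough reductions to rewrite an arbitrary $\zeta_A(\fs)$ in terms of the restricted-index ones). Once that theorem is in hand, the passage to Todd's conjecture is elementary: no obstacle remains beyond the bookkeeping of the generating-function identity above. I would present the corollary's proof as: (i) invoke the basis theorem to get $\dim_k \cZ_w = |\mathcal{T}_w|$; (ii) identify $|\mathcal{T}_w|$ with the composition count; (iii) derive the recursion and boundary values by the elementary generating-function manipulation, matching them term-by-term with the definition of $d_w'$. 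The only care needed is in the small-weight regime $w \le q$, where the admissibility constraint $s_1 \ge 2$ interacts with the part-size bound in a way that produces the two non-recursive cases in Todd's formula, and I would verify those directly by listing.
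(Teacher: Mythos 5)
Your overall strategy is exactly the paper's: prove Thakur's basis conjecture and then reduce Todd's conjecture to the purely combinatorial identity $\dim_k\cZ_w=|\ITw|=d_w'$ (the paper asserts this cardinality statement right after Conjecture~\ref{Thakur's Conj} without writing out the count). However, the combinatorial half of your argument rests on a misidentification of Thakur's conjectural basis, and this is not a bookkeeping issue that can be ``reconciled'' later --- it changes the count. The set in Conjecture~\ref{Thakur's Conj} is
\[
\ITw=\{(s_1,\dots,s_r)\in\cI_w \mid s_i\le q \ (1\le i\le r-1),\ s_r<q\},
\]
i.e.\ the strict constraint singles out the \emph{last} entry, and there is no admissibility condition $s_1\ge 2$: in the non-archimedean setting every $\zeta_A(\fs)$ converges, so nothing analogous to the classical leading-$1$ restriction appears. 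With your set (parts in $\{1,\dots,q\}$ and $s_1\ge 2$) the count is $c_w-c_{w-1}$, which already fails at $w=2$ for $q\ge 3$: Todd predicts $d_2'=2$, realized by $(2)$ and $(1,1)$ in $\IT_2$, whereas your set contains only $(2)$. The symptom you noticed ($d_1'$ coming out as $0$ instead of $1$) is a manifestation of having the wrong set, not an edge case of the indexing.

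With the correct set your generating-function computation goes through cleanly and is the intended verification. Let $c_w$ be the number of compositions of $w$ into parts from $\{1,\dots,q\}$, so $c_0=1$, $c_j=0$ for $j<0$, and $c_w=\sum_{i=1}^{q}c_{w-i}$ for $w\ge 1$. Deleting a terminal part equal to $q$ sets up a bijection between the excluded compositions and all compositions of $w-q$, hence $|\ITw|=c_w-c_{w-q}$. For $1\le w<q$ every composition of $w$ qualifies and $|\ITw|=c_w=2^{w-1}$; for $w=q$ one gets $2^{q-1}-c_0=2^{q-1}-1$; and for $w>q$ subtracting the recursions for $c_w$ and $c_{w-q}$ gives $d_w'=\sum_{i=1}^{q}d_{w-i}'$. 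The rest of your outline --- that the only hard input is the basis theorem itself, with spanning obtained from the $q$-shuffle reductions (the operators $\sU^{\bullet}$) and independence from the ABP-criterion --- correctly matches the paper.
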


In analogy with Hoffman's basis conjecture, Thakur~\cite{T17} proposed the following conjecture. 
\begin{conjecture}[Thakur's basis conjecture]\label{Thakur's Conj}
Let $w$ be a positive integer, and let $\ITw$ be the 
set consisting of all $\fs= (s_{1}, \ldots, s_{r} )\in \ZZ_{>0}^{r}$ (varying positive integers $r$) for which 
\begin{itemize}
    \item $\wt(\fs)=w$;
    \item $s_{i} \leq q$ for all $1 \leq  i \leq r - 1$; 
    \item $s_{r}<q$. 
    \end{itemize}
Then the following set
\[\mathcal{B}_{w}^{\rm{T}}:=\left\{\zeta_{A}(\fs)| \fs\in \ITw \right\} \]
is a basis of the $k$-vector space $\cZ_{w}$.
\end{conjecture}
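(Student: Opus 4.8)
The plan is to prove Thakur's basis conjecture by establishing two halves separately: the \emph{spanning} statement, that $\mathcal{B}_w^{\mathrm{T}}$ generates $\cZ_w$ over $k$, and the \emph{linear independence} statement, that $\#\ITw = d_w'$ elements of $\mathcal{B}_w^{\mathrm{T}}$ are $k$-linearly independent. For the spanning half, the natural tool is Thakur's $q$-shuffle relations together with the sum-shuffle/stuffle type identities: one should set up an explicit ``straightening'' or reduction algorithm which, given any $\zeta_A(\fs)$ with some $s_i > q$ (for $i<r$) or $s_r \geq q$, rewrites it as a $k$-linear combination of $\zeta_A(\ft)$ of the same weight with strictly smaller complexity in a suitable well-founded ordering (e.g.\ lexicographic on $(\text{number of bad entries}, \text{size of the largest bad entry}, \dots)$). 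The key inputs here are (i) the explicit $q$-shuffle formula for a product $\zeta_A(s)\zeta_A(\fs')$ expressing it in terms of MZV's, and (ii) a ``Carlitz-type'' identity handling the boundary case $s_i = q$ or reducing a single large argument. One must verify the algorithm terminates and that the resulting indices all lie in $\ITw$.

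For the independence half, the strategy follows the Anderson--Brownawell--Papanikolas (ABP) / Chang philosophy that $\ok$-linear relations among MZV's come from explicit period relations of $t$-motives, combined with the fact cited in the excerpt (from \cite{C14}) that all $\ok$-linear relations among MZV's are generated by $k$-linear relations among MZV's of the same weight. Concretely, one attaches to the MZV's of weight $w$ a single ``big'' pre-$t$-motive (or dual $t$-motive) whose period matrix records the family $\{\zeta_A(\fs) : \wt(\fs)=w\}$, and one computes the dimension of its motivic Galois group, or more directly shows that the relevant entries of the period matrix satisfy no nontrivial $\ok$-linear relation beyond those forced by $q$-shuffle. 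The cleanest route is probably to show that the specific sub-collection $\{\zeta_A(\fs):\fs\in\ITw\}$ is linearly independent by exhibiting, via the ABP criterion, that any putative relation pulls back to a relation among explicitly constructed rigid analytic trivializations which can be separated by their orders of vanishing / leading terms at $t=\theta, \theta^q, \theta^{q^2},\dots$. An induction on $w$ using the recursion $d_w' = \sum_{i=1}^q d_{w-i}'$ should organize this: relations among weight-$w$ indices in $\ITw$ are reduced, by looking at the first coordinate $s_1 \in \{1,\dots,q\}$, to relations among weight-$(w-s_1)$ indices, which vanish by the inductive hypothesis provided one controls the ``cross terms'' between different values of $s_1$.

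Combining the two halves gives $\dim_k \cZ_w = \#\ITw$, and a separate combinatorial computation shows $\#\ITw = d_w'$: indices counted by $\ITw$ are compositions of $w$ into parts of size $\le q$ with last part $<q$, and a direct transfer-matrix / generating-function argument yields exactly the recursion in Conjecture~\ref{Todd's Conj} with the stated initial values $2^{w-1}$ for $w<q$ and $2^{q-1}-1$ for $w=q$. Thus Todd's dimension conjecture (Conjecture~\ref{Todd's Conj}) follows as an immediate corollary once the basis conjecture is proved.

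I expect the main obstacle to be the linear independence half, specifically the bookkeeping in the ABP/period-matrix argument: one needs a uniform construction of the dual $t$-motives attached to \emph{all} admissible $\fs$ of weight $w$ simultaneously (not just a fixed one), a precise understanding of how $q$-shuffle relations manifest as morphisms between these $t$-motives, and then a proof that the span of the period entries has dimension \emph{exactly} $\#\ITw$ --- i.e.\ that the $\ITw$-indexed values are not merely a spanning set but are genuinely independent. The delicate point is ruling out ``accidental'' $k$-linear relations among the $\zeta_A(\fs)$, $\fs\in\ITw$, that are not visible from $q$-shuffle; this requires either a sharp lower bound on the transcendence degree coming from the structure of the associated unipotent group, or an explicit valuation-theoretic separation of the corresponding rigid analytic functions. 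The boundary behavior at $w=q$ (where one basis element is ``lost'', reflected in $2^{q-1}-1$ rather than $2^{q-1}$, because $\zeta_A(q)$ is $k$-rational up to the Carlitz period phenomenon, or rather because $\zeta_A(\fs)$ with $s_r=q$ is excluded) will need particular care and is likely where the cleanest-looking induction breaks and must be patched by hand.
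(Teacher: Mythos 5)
Your spanning half is essentially the paper's approach: the authors build an explicit rewriting operator $\sU^{\zeta}$ (and a parallel $\sU^{\Li}$) out of Thakur's $q$-shuffle relations and the identity $\zeta_{A}(q)=L_{1}\zeta_{A}(1,q-1)$, and prove termination by exactly the kind of well-founded lexicographic ordering you describe. The combinatorial count $|\ITw|=d_{w}'$ is also as you say. So the first half and the final counting step are fine.

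The genuine gap is in the independence half. First, the paper does \emph{not} prove linear independence of $\{\zeta_{A}(\fs):\fs\in\ITw\}$ directly. The decisive move is to pass to Carlitz multiple polylogarithm values $\Li_{\fs}(\bone)$ and to prove independence of the \emph{different} set $\{\Li_{\fs}(\bone):\fs\in\INDw\}$, where $\INDw$ consists of indices with no entry divisible by $q$; one then concludes for $\mathcal{B}_{w}^{\rm T}$ only via the bijection $|\INDw|=|\ITw|$ together with the generating statement. This switch matters because for $\fs\in\ITw$ one has $\zeta_{A}(\fs)=\Li_{\fs}(\bone)$, and the ABP/Frobenius-difference machinery (the deformation series $\cL(\fs)$ and the system of equations they satisfy) is adapted to CMPL's, not to MZV's with entries $>q$. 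Second, your inductive scheme ``peel off the first coordinate $s_{1}\in\{1,\dots,q\}$ and control cross terms'' is not the mechanism that works. What the ABP criterion actually yields, applied to a putative relation $P$, is that for every left prefix $\fs$ of an index in $\Supp(P)$ the tail sum $\sum_{\fs'}a_{(\fs,\fs')}\Li_{\fs'}(\bone)$ lies in $k\cdot\tpi^{\,w-\wt(\fs)}$; combined with the induction hypothesis this forces $\Supp(P)$ into the much smaller set $\INDzw$ of indices with $s_{2},\dots,s_{r}$ divisible by $q-1$ (the ``simultaneously Eulerian'' phenomenon). Your ``separation by orders of vanishing at $t=\theta^{q^{j}}$'' is the standard ABP step, but it cannot by itself rule out relations, because when $(q-1)\mid w$ genuine Eulerian relations are lurking.

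The missing quantitative core is the computation of the dimension of the solution space $\sX_{w}$ of the resulting system of Frobenius difference equations over $\FF_{q}(t)$: the paper shows $\dim\sX_{w}=0$ if $(q-1)\nmid w$ and $=1$ if $(q-1)\mid w$, by a delicate explicit determinant computation modulo $T=t-t^{q}$. Nothing in your proposal supplies this, and in the case $(q-1)\mid w$ even this is not enough: one must additionally exhibit an element $P_{w}$ supported on $\INDw$ with $\Li_{P_{w}}(\bone)=\tpi^{w}\neq 0$ and use one-dimensionality of $\sX_{w}$ to force any relation to be a scalar multiple of $P_{w}$, hence zero. Your diagnosis that the $w=q$ boundary is where ``the induction breaks'' misses this: the real dichotomy throughout all weights is $(q-1)\mid w$ versus $(q-1)\nmid w$, and it is handled uniformly by the dimension formula for $\sX_{w}$, not by a patch at $w=q$.
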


We mention that for every positive integer $w$, the cardinality of $\IT_w$ is equal to $d_{w}'$ given in Conjecture~\ref{Todd's Conj}. It follows that as in the classical case, Thakur's basis conjecture implies Todd's dimension conjecture. The main result of this paper is to prove Thakur's basis conjecture.

\begin{theorem}\label{T:Main Thm}
For any positive integer $w$,  Conjecture~\ref{Thakur's Conj} is true. As a consequence, Conjecture~\ref{Todd's Conj} is also true. 
 \end{theorem}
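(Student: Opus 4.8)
The plan is to prove the two statements of Thakur's basis conjecture separately: first that $\mathcal{B}_w^{\rm T}$ spans $\cZ_w$ (the generating part), and then that it is linearly independent over $k$ (the independence part). For the generating part, I would set up a reduction algorithm on the $\FF_p$-span of MZV's of weight $w$ that, given an arbitrary admissible index $\fs$, rewrites $\zeta_A(\fs)$ as a $k$-linear combination of $\zeta_A(\fs')$ with $\fs' \in \ITw$. The engine for this is Thakur's $q$-shuffle relations together with the sum-shuffle / ``stuffle''-type identities available in the function-field setting; one uses these to reduce any entry $s_i > q$ (for $i < r$) or $s_r \geq q$. The natural measure of progress is a monomial order on indices —- something like ordering first by the number of ``bad'' entries, then lexicographically -— and one checks that each application of a shuffle relation strictly decreases this measure while staying within weight $w$. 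Since $\cZ_w$ is spanned by all $\zeta_A(\fs)$ of weight $w$, terminating this algorithm shows $\mathcal{B}_w^{\rm T}$ spans.

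For the independence part -— which I expect to be the main obstacle -— the strategy is the standard one in this area: realize each $\zeta_A(\fs)$ as a period (a specific entry of a period matrix) of a suitable $t$-motive, pass to the associated Tannakian/Galois group, and use a transcendence/linear-independence criterion (in the style of the Anderson–Brownawell–Papanikolas theory and Chang's linear independence theorem quoted in the excerpt) to convert a hypothetical $k$-linear relation $\sum_{\fs \in \ITw} c_\fs \zeta_A(\fs) = 0$ into a constraint on the motivic Galois group of the corresponding mixed Carlitz–Tate object. The key input is that MZV's coincide, up to lower-weight corrections, with Carlitz multiple polylogarithm (CMPL) values at algebraic points, and that for indices in $\ITw$ the associated CMPL's are ``rigid'' enough that their $t$-motive has a maximal unipotent radical. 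Concretely I would build, for the finite family $\{\fs \in \ITw\}$, a single $t$-motive whose period matrix contains all the relevant $\zeta_A(\fs)$, compute (or bound below) the dimension of its Galois group by exhibiting enough independent coordinates in the unipotent part, and deduce that no nontrivial $k$-linear relation among the $\zeta_A(\fs)$, $\fs \in \ITw$, can exist.

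The bridge between the two halves is a dimension count: the generating part gives $\dim_k \cZ_w \leq \#\ITw = d_w'$, the independence part gives $\dim_k \cZ_w \geq \#\ITw$, and together they force equality, which is exactly Todd's dimension conjecture and simultaneously shows $\mathcal{B}_w^{\rm T}$ is a basis. The delicate points I anticipate are, on the spanning side, proving that the reduction algorithm actually terminates -— i.e. finding the right well-ordering and verifying that every ``bad'' entry can be removed by a shuffle relation without creating a worse one -— and, on the independence side, the construction of the relevant $t$-motive and the lower bound on its Galois group, since one must rule out ``hidden'' relations coming from the Carlitz period $\tilde\pi$ and from coincidences among CMPL values at different tuples. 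I would handle the latter by an inductive argument on the weight $w$, using the already-established cases of smaller weight to control the lower-weight error terms relating $\zeta_A(\fs)$ to CMPL values, and by organizing the indices in $\ITw$ according to their last entry $s_r < q$ so that the corresponding blocks of the period matrix are visibly independent.
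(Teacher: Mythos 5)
Your overall architecture (a spanning half, an independence half, and a counting argument $\dim_k\cZ_w\le|\ITw|$ together with independence forcing equality) matches the paper's, and your spanning half is in the spirit of what is actually done: the paper formalizes the Todd--Ngo Dac reduction as explicit operators $\sU^{\bullet}$ on a formal space of indices and proves termination (Theorem~\ref{theorem-algo}), much as you propose with a well-ordering on indices. The genuine gap is in your independence half. Saying that the $t$-motive attached to the family $\{\zeta_A(\fs)\mid \fs\in\ITw\}$ has ``maximal unipotent radical'' and that one can ``exhibit enough independent coordinates'' is a restatement of the theorem, not an argument: no mechanism is offered for producing the lower bound on the Galois group, and the one candidate obstruction you mention --- relations coming from $\tpi$ --- is exactly where the difficulty lives. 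The paper's ABP-based analysis shows that when $(q-1)\mid w$ the relevant space of candidate relations is \emph{not} zero: the solution space $\sX_w$ of the Frobenius difference system \eqref{eq-Frob-w} is one-dimensional (Theorem~\ref{theorem-dimension}, itself a delicate explicit matrix computation modulo $T=t-t^q$), reflecting the Eulerian relation $\tpi^w\in\cZ_w$. Killing this last potential relation requires a further trick: producing, via the generation result, an explicit $P_w$ supported on $\INDw$ with $\Li_{P_w}(\bone)=\tpi^w$, showing the hypothetical relation must be proportional to $P_w$ inside the one-dimensional $\sX_w$, and then evaluating to get a contradiction. None of this is visible in your sketch, and a blanket ``maximal unipotent radical'' claim would be false as stated without quantifying the $\tpi$-contribution.

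A second, related point: the paper does not prove independence for $\ITw$ directly. It proves that $\{\Li_{\fs}(\bone)\mid\fs\in\INDw\}$ (indices with no part divisible by $q$) is linearly independent, by induction on $w$, using the key Lemma~\ref{lemma-rational}: ABP plus the induction hypothesis forces the support of any relation into the much smaller set $\INDzw$ (with $s_2,\dots,s_r$ divisible by $q-1$, the ``simultaneously Eulerian'' constraint), and identifies the coefficients with a solution of \eqref{eq-Frob-w}. The transfer to Thakur's basis is then purely by counting, $|\INDw|=|\ITw|$. Your plan to work with $\ITw$ itself and to organize its indices by last entry so that ``blocks are visibly independent'' does not interface with the difference-equation machinery in this way, and I see no reason the blocks would decouple; the passage through $\INDw$ and $\INDzw$ is not cosmetic but is what makes the solution-space dimension computable. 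So while your high-level plan is reasonable, the independence argument as proposed has a real missing idea rather than a routine verification.
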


As we have determined the dimension of $\cZ_{w}$ for each $w\in \NN$, it is a natural question about how to describe all the $k$-linear relations among the MZV's of  weight $w$. We establish a concrete and simple mechanism in Theorem~\ref{T: DetermineLR} that~\eqref{E:Main Relations} account for all the $k$-linear relations. Note that Theorem~\ref{T: DetermineLR} basically verifies the $\sB^{*}$-version of~\cite[Conjecture~5.1]{To18}.

\begin{remark}
In~\cite{ND21}, Ngo Dac showed that for each $w\geq 1$, $\mathcal{B}_{w}^{\rm{T}}$ is a generating set for the $k$-vector space $\cZ_{w}$. As a consequence of Ngo Dac's result, one has the {\it{upper bound result}}:
\[ \dim_{w}\cZ_{w}\leq d_{w}',\ \forall w\geq 1. \]
Theoretically, to prove Thakur's basis conjecture it suffices to show that his conjectural basis is linearly independent over $k$. However, when developing our approaches in a unified framework, we reprove Ngo Dac's result mentioned above in Corollary~\ref{Cor:generating set}, which also provides a generating set for the $k$-vector space spanned by the special values $\Li_{\fs}(\bone)$ for all indices $\fs$ with $\wt(\fs)=w$ and which is described in the next section.
\end{remark}

\begin{remark}

As mentioned above, by~\cite{C14} $k$-linear independence of MZV's implies $\ok$-linear independence. It follows that for each positive integer $w$, $\mathcal{B}_{w}^{\rm{T}}$ is also a basis for the $\ok$-vector space spanned by the MZV's of weight $w$.
\end{remark}

Given a finite place $v$ of $k$, we mention that $v$-adic MZV's $\zeta_{A}(\fs)_{v}$ were introduced in~\cite{CM21} for $\fs\in \ZZ_{>0}^{r}$. By~\cite[Thm.~1.2.2]{CM21}, there is a natural $k$-linear map  from ($\infty$-adic) MZV's to $v$-adic MZV's, and it was further shown that the map is indeed an algebra homomorphism with kernel containing the ideal generated by $\zeta_{A}(q-1)$ since $\zeta_{A}(q - 1)_{v} = 0$ by \cite{Go79}. As a consequence of Theorem~\ref{T:Main Thm}, we have the following upper bound result for $v$-adic MZV's.
\begin{corollary}
Let $v$ be a finite place of $k$. For a positive integer $w$, we let $\cZ_{v,w}$ be the $\ok$-vector space space by $v$-adic MZV's of weight $w$ defined in~\cite{CM21}. Then we have \[ \dim_{k} \cZ_{v,w}\leq d_{w}'-d_{w-(q-1)}', \]
where $d_{0}':=1$ and $d_{n}':=0$ for $n< 0$. 

\end{corollary}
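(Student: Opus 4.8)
The plan is to realize $\cZ_{v,w}$ as an explicit quotient of $\cZ_{w}$ and to bound the dimension of the kernel from below, using Thakur's non-vanishing theorem together with Theorem~\ref{T:Main Thm}.

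By~\cite[Thm.~1.2.2]{CM21}, the assignment $\zeta_{A}(\fs)\mapsto\zeta_{A}(\fs)_{v}$ extends to a surjective $k$-algebra homomorphism $\iota_{v}$ from $\cZ$ onto the $k$-algebra generated by the $v$-adic MZV's. Since $\iota_{v}$ carries the spanning set $\{\zeta_{A}(\fs): \wt(\fs)=w\}$ of $\cZ_{w}$ onto the spanning set $\{\zeta_{A}(\fs)_{v}: \wt(\fs)=w\}$ of $\cZ_{v,w}$, its restriction defines a surjective $k$-linear map $\iota_{v,w}\colon \cZ_{w}\twoheadrightarrow\cZ_{v,w}$; moreover $\cZ_{w}$ is finite-dimensional by Theorem~\ref{T:Main Thm} (this finiteness already follows from Ngo Dac's theorem~\cite{ND21}). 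The rank-nullity formula then gives
\[
\dim_{k}\cZ_{v,w}=\dim_{k}\cZ_{w}-\dim_{k}\ker\iota_{v,w}=d_{w}'-\dim_{k}\ker\iota_{v,w},
\]
so the corollary reduces to the lower bound $\dim_{k}\ker\iota_{v,w}\geq d_{w-(q-1)}'$. (If one insists on the $\ok$-span as written in the statement, apply the exact functor $\ok\otimes_{k}(-)$ to this $k$-linear surjection; the resulting bound is the same.)

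For the lower bound, recall (as noted just before the statement) that $\ker\iota_{v}$ contains the ideal $\zeta_{A}(q-1)\cZ$, since $\zeta_{A}(q-1)_{v}=0$ by~\cite{Go79}. For any index $\fs$ with $\wt(\fs)=w-(q-1)$, Thakur's $q$-shuffle relation~\cite{T10} expresses the product $\zeta_{A}(q-1)\zeta_{A}(\fs)$ as an $\FF_{p}$-linear combination of MZV's of weight $w$; hence $\zeta_{A}(q-1)\cdot\cZ_{w-(q-1)}$ is a $k$-subspace of $\cZ_{w}$ that is contained in $\ker\iota_{v,w}$. Since $\zeta_{A}(q-1)$ is a nonzero element of the field $k_{\infty}$ by Thakur's non-vanishing theorem~\cite{T09} and $\cZ\subseteq k_{\infty}$, multiplication by $\zeta_{A}(q-1)$ is injective on $\cZ$, so
\[
\dim_{k}\ker\iota_{v,w}\geq\dim_{k}\bigl(\zeta_{A}(q-1)\cdot\cZ_{w-(q-1)}\bigr)=\dim_{k}\cZ_{w-(q-1)}=d_{w-(q-1)}',
\]
the last equality being Theorem~\ref{T:Main Thm} with the conventions $d_{0}'=1$ and $d_{n}'=0$ for $n<0$. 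Combining the two displays yields $\dim_{k}\cZ_{v,w}\leq d_{w}'-d_{w-(q-1)}'$.

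I do not anticipate a genuine obstacle: the corollary is a formal consequence of Theorem~\ref{T:Main Thm} and the structural inputs of~\cite{CM21},~\cite{T09},~\cite{T10}. The points to check are that $\iota_{v}$ restricts to a well-defined surjection in each weight (immediate from the definitions of $\iota_{v}$ and of $\cZ_{v,w}$), that $\zeta_{A}(q-1)\cdot\cZ_{w-(q-1)}$ is genuinely a subspace of $\cZ_{w}$ of the full dimension $d_{w-(q-1)}'$ (where one uses that $\cZ$ embeds into a field and that $\zeta_{A}(q-1)\neq 0$), and a quick verification of the boundary cases $w\leq q-1$ against the stated conventions for $d_{n}'$.
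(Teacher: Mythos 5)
Your proof is correct and follows exactly the argument the paper intends: the paper states this corollary without proof, but the preceding discussion (the $k$-linear weight-preserving surjection from \cite[Thm.~1.2.2]{CM21} whose kernel contains the ideal generated by $\zeta_{A}(q-1)$, together with $\zeta_{A}(q-1)\neq 0$ and Theorem~\ref{T:Main Thm}) is precisely what you have assembled. Your extra care with the boundary cases and the $k$- versus $\ok$-span issue is sound.
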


\subsection{Strategy of proofs}

The key ingredient of our proof of Theorem~\ref{T:Main Thm} is to switch the study of MZV's to that of the special values of Carlitz multiple polylogarithms (abbreviated as CMPL's) $\Li_{\fs}$ for $\fs\in \ZZ_{>0}^{r}$ defined by the first named author in~\cite{C14}. For the definition of $\Li_{\fs}$, see~\eqref{E:CMPL}. Note that CMPL's are higher depth generalization of the Carlitz polylogarithms initiated by Anderson-Thakur~\cite{AT90}. Based on the interpolation formula of Anderson-Thakur~\cite{AT90, AT09}, one knows from~\cite{C14} that $\zeta_{A}(\fs)$ can be expressed as a $k$-linear combinations of CMPL's at some integral points, and in the particular case for $\fs \in \ITw$, one has the simple identity \eqref{E:sL=zeta} that
\[\zeta_{A}(\fs) =\Li_{\fs}(\bone), \]
where $\bone := (1,\ldots,1) \in \ZZ_{>0}^{\dep(\fs)}$.   

Given a positive integer $w$, we let $\INDw$ be the set consisting of all tuples of positive integers $\fs = (s_{1}, \ldots, s_{r})$ with $\wt(\fs)=w$ and $q \nmid s_{i}$ for all $1 \leq i \leq r$, and note that $\INDw$ was used and studied in~\cite{ND21}. The overall arguments in the proof of Theorem~\ref{T:Main Thm} are divided into the following two parts:

\begin{itemize}
    \item [(I)] We show  in Corollary~\ref{Cor:generating set} that the two sets $ \mathcal{B}_{w}^{\Li,\rm{T}}:=\left\{\Li_{\fs}| \fs\in \ITw \right\}$ and $\mathcal{B}_{w}^{T}$ are generating sets of the $k$-vector space $\cZ_{w}$ for every positive integer $w$ (the latter one was known by Ngo Dac~\cite{ND21}), and further show in Theorem~\ref{theorem-generator} that the set $\left\{ \Li_{\fs}(\bone)| \fs\in \INDw  \right\}$ is a generating set for $\cZ_{w}$.
    
    \item [(II)] We prove in Theorem~\ref{theorem-IND-basis} that  $\left\{ \Li_{\fs}(\bone)| \fs\in \INDw  \right\}$ is a linearly independent set over $k$. Since $|\ITw|=|\INDw|$ (see~Proposition~\ref{Pop:|ITw|=|INDw|}), it follows that $\mathcal{B}_{w}^{T}$ is a $k$-basis of $\cZ_{w}$.  
\end{itemize}

Regarding the part (I) above, we try to abstract and unify our methods to have a wide scope. We consider a formal $k$-space $\cH$ generated by indices, on which the harmonic product $*^{\Li}$ and $q$-shuffle product $*^{\zeta}$ are defined. We also consider the power/truncated sums maps arising from $\Li_{\fs}(\bone)$ and $\zeta(\fs)$. We then define the maps $\sLL, \sLZ \colon \cH\rightarrow k_{\infty}$ as the CMPL and MZV-realizations. These  realizations preserve the harmonic product and $q$-shuffle product respectively, illustrating the stuffle relations~\cite{C14} for CMPL's and the $q$-shuffle relations~\cite{T10, Ch15} for MZV's in a formal framework.  See Theorem~\ref{P:product of sLbullet}.

Then we adopt the ideas and methods rooted in~\cite{To18, ND21} to achieve Theorem~\ref{theorem-algo}, which enables us to show the results mentioned in (I). Since some essential arguments are from those in~\cite{ND21}, we leave the detailed proofs in the appendix. However, under such abstraction the maps $\sUB$ given in Definition~\ref{def-sU} are explicit. In particular, Theorem~\ref{theorem-algo}~(2) allows us to provide a concrete and effective way to express $\Li_{\fs}(\bone)$ (resp.~$\zeta_{A}(\fs)$) as  linear combinations in terms of $\mathcal{B}_{w}^{\Li,\rm{T}}$ (resp.~$\mathcal{B}_{w}^{T}$) simultaneously. This is more concrete and explicit than those developed in~\cite{ND21}, and enables us to establish a simple mechanism in Theorem~\ref{T: DetermineLR} that generates all the $k$-linear relations among MZV's of the same weight (cf.~the $\sB^{*}$-version of~\cite[Conjecture~5.1]{To18}).

Concerning the second part (II), the primary tool that we use is the Anderson-Brownawell-Papanikolas criterion~\cite[Thm.~3.1.1]{ABP04} (abbreviated as ABP-criterion), which has very strong applications in transcendence theory in positive characteristic. We prove (II) by induction on $w$. We start with a linear equation 
 \begin{equation}\label{E:IntrodtionLE}
 \sum_{\fs\in \INDw }\alpha_{\fs}(\theta) \Li_{\fs}(\bone)=0,\ \hbox{ for } \alpha_{\fs}=\alpha_{\fs}(t)\in \FF_{q}(t), \ \forall \fs\in \INDw, 
 \end{equation} 
and aim to show that $\alpha_{\fs}=0$ for all $\fs\in \INDw$. We outline our arguments as below.

\begin{enumerate}
    \item[(II-1)] Using the period interpretation~\cite[(3.4.5)]{C14} for special values of CMPL's (inspired by~\cite{AT09} for MZV's), we can construct a system of difference equations $\widetilde{\psi}^{(-1)}=\widetilde{\Phi} \widetilde{\psi}$ fitting into the conditions of ABP-criterion for which~\eqref{E:IntrodtionLE} can be expressed as a $k$-linear identity among the entries of $\widetilde{\psi}(\theta)$. We then apply the arguments  in the proof of~\cite[Thm.~2.5.2]{CPY19}, eventually we obtain the big system of Frobenius difference equations~\eqref{eq-Frob-w}, which is essentially from~\cite[(3.1.3)]{CPY19}.
\item [(II-2)] Denote by $\sX_{w}$ the $\FF_{q}(t)$-vector space of the solution space of~\eqref{eq-Frob-w}. We then show in Theorem~\ref{theorem-dimension} that $\dim_{\FF_{q}(t)}\sX_{w}$ is either $1$ if $(q-1) \mid w$, or $0$ if $(q-1) \nmid w$. 
    \item [(II-3)] Using the trick of~\cite{C14, CPY19} together with the induction hypothesis, we establish Lemma~\ref{lemma-rational}. By the first part of Lemma~\ref{lemma-rational}, we see that if $\alpha_{\fs} \neq 0$, then  $\fs$ must be in $\INDzw$, which is given in~\eqref{E:INDzw}. Then we apply Lemma~\ref{lemma-rational}~(2) to conclude that there exists $(\varepsilon_{\fs})\in \sX_w$ for which     $\alpha_{\fs}=\varepsilon_{\fs}$ for all $\fs \in \INDzw$.
     Note that the description of indices in $\INDzw$ arises from the simultaneously Eulerian phenomenon in~\cite[Cor.~4.2.3]{CPY19}, which was first witnessed by Lara Rodr\'iguez and Thakur in~\cite{LRT14}.
    \item [(II-4)] When $(q - 1) \nmid w$, we use the fact of this case that $\sX_{w}=0$ to conclude that $\varepsilon_{\fs}=0$ for every $\fs$, and hence $\alpha_{\fs}(\theta)=0$ for every $\fs$. When $(q-1) \mid w$, we apply Theorem~\ref{theorem-generator} together with the fact of this case that $\dim_{\FF_{q}(t)}\sX_{w}=1$. Having one trick further in the proof of Theorem~\ref{theorem-IND-basis} shows that $\alpha_{\fs}(\theta)=0$ for every $\fs$.
\end{enumerate}
\begin{remark}
The above is the overall strategy of our proof. However, to save some length without affecting logical arguments, we avoid many details in II-(1). Instead, we directly go to~\eqref{E:IntrodtionLE} in this paper.
\end{remark}

\subsection{Organization of the paper}
    In Sec.~\ref{Sec: Indices}, we introduce the abstract $k$-vector space $\cH$, on which  the harmonic product $*^{\Li}$ and $q$-shuffle product $*^{\zeta}$ are defined. The purpose of Sec.~\ref{Sec: Indices} is  to derive the product formulae, stated as Proposition~\ref{P:product of sLbullet}, for the CMPL and MZV realizations $\sLL$ and $\sLZ$.  In Sec.~\ref{Sec: Generators} we follow \cite{To18, ND21} to transfer their $\sB, \sC, \sBC$ maps into our formal framework in a concrete way. The primary result of this section is to establish Theorem~\ref{theorem-generator}, which is an application of Theorem~\ref{theorem-algo}, whose detailed proof is given in the appendix. 
    
   We study the specific system of Frobenius difference equations~\eqref{eq-Frob-w} mentioned in the (II) above in Sec.~\ref{Sec:FDE}. We carefully analyze the solution space $\sX_{w}$ of  \eqref{eq-Frob-w}, and determine its dimension in Theorem~\ref{theorem-dimension}.   In Sec.~\ref{Sec:Linear Indep}, we establish the key Lemma~\ref{lemma-rational}, which is used to prove the $k$-linear independence of $\left\{ \Li_{\fs}(\bone)| \fs\in \INDw  \right\}$ in Theorem~\ref{theorem-IND-basis}. With these results at hand, we prove Theorem~\ref{T:Main Thm} in Sec.~\ref{Sec: Proof of Main Thm} as a short conclusion. Finally, combining Theorems~\ref{theorem-algo} and \ref{T:Main Thm} we give a proof of Theorem~\ref{T: DetermineLR} in Sec.~\ref{Sub:Generating set of linear relations}. As mentioned above, the appendix consists of a detailed proof of Theorem~\ref{theorem-algo}.
   
 \begin{remark}
 When this paper was nearly in its final version, we announced our results to Thakur and soon after Ngo Dac sent his paper \cite{IKLNDP22} to Thakur on the same day announcing that he and his coauthors show the same results for alternating MZV's. Our paper was finished a few days later than  theirs. The key strategy of their proofs is in the same direction as ours. They switch the study of alternating MZV's to the $k$-vector space spanned by the following special values of CMPL's:
 \[\left\{ \Li_{\fs} (\gamma_{1},\ldots,\gamma_{r} )| \fs\in \ITw,\  (\gamma_{1},\ldots,\gamma_{r} )\in (\FF_{q^{q-1}}^{\times})^{r}\right\} .\] Note that any value in the set above is equal to an alternating MZV at $\fs$ up to an algebraic multiple (cf.~\cite[Prop.~2.12]{CH21}). However, by the first author's result~\cite[Thm.~5.4.3]{C14} showing that CMPL's at algebraic points form a $\ok$-graded algebra defined over $k$, one can remove the algebraic factor without affecting the study for the $k$-vector space of alternating MZV's. We find that some directions of  our ideas and theirs are similar and the primary methods rooted in~\cite{ABP04, C14, To18, CPY19, ND21} are the same, but presentations and detailed arguments are  different.  
 
 \end{remark}

\section{Two products on $\cH$}

\subsection{Indices}\label{Sec: Indices}

By an index, we mean the empty set $\emptyset$ or an $r$-tuple of positive integers $\fs=(s_{1},\ldots,s_{r})$. In the former case, its depth and weight are defined to be $\dep(\emptyset)=0$ and $\wt(\emptyset)=0$. The depth and weight of the latter case are defined to be 
$\dep(\fs)=r$ and $\wt(\fs)=\sum_{i=1}^{r}s_{i}$ respectively. We denote by $\cI := \bigsqcup_{r \geq 0} \ZZ_{\geq 1}^{r}$ the set of indices, where $\ZZ_{\geq 0}^{0}$ is referred to the empty set $\emptyset$.

Throughout this paper, we adapt the following notations. 

\begin{align*}
\cI_{w} &:= \{ \fn \in \cI \ | \ \wt(\fn) = w \} \ \ (w \geq 0), \\
\cI_{> 0} &:= \{ \fn \in \cI \ | \ \wt(\fn) > 0 \} = \cI \setminus \{ \emptyset \}, \\
\ITw &:= \{ (s_{1}, \ldots, s_{r} ) \in \cI_{w} \ | \ s_{i} \leq q \ (1 \leq \forall i \leq r - 1) \ \textrm{and} \ s_{r} < q \} \ \ (w > 0), \\
\INDw &:= \{ (s_{1}, \ldots, s_{r} ) \in \cI_{w} \ | \ q \nmid s_{i} \ (1 \leq \forall i \leq r) \} \ \ (w > 0), \\
\IT_{0} &:= \IND_{0} := \{ \emptyset \}.
\end{align*}
We mention that $\ITw$ refers to the set of indexes arising from Thakur's basis, and $\INDw$ is the set studied in~\cite{ND21}. For any subset $S$ of $\cI$, we denote by $|S|$ the cardinality of $S$ when no confusions arise.

\begin{proposition}\label{Pop:|ITw|=|INDw|}
For each weight $w\geq 0$, we have a bijection between $\INDw$ and $\ITw$, and hence $|\INDw|=|\ITw|$.
\end{proposition}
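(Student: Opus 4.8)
The statement asserts a bijection between $\INDw$ (indices of weight $w$ with no part divisible by $q$) and $\ITw$ (indices of weight $w$ with all parts $\leq q$ and last part $<q$). My plan is to construct an explicit weight-preserving bijection by a digit-carrying / regrouping procedure, working one part at a time. The key observation is that there is a tension between two kinds of "forbidden'' values: in $\INDw$ one forbids multiples of $q$, while in $\ITw$ one forbids parts $>q$ (except one is allowed to use the value $q$ itself, in all but the last slot). So the map should take a large part $s_i$ (possibly $\equiv 0 \bmod q$) and split off copies of $q$ from it, and conversely the inverse should merge a run of $q$'s into the following part.

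\textbf{Construction of the map $\ITw \to \INDw$.} Given $\fs = (s_1,\dots,s_r) \in \ITw$, scan from the left. Whenever we see a maximal block of consecutive entries equal to $q$, say $s_{i} = s_{i+1} = \cdots = s_{i+\ell-1} = q$ followed by an entry $s_{i+\ell} < q$ (such an entry must exist and be $<q$, by the defining condition on $\ITw$; in particular the last entry $s_r<q$ is never inside such a block in a way that leaves it dangling), replace the block $(q,\dots,q,s_{i+\ell})$ of length $\ell+1$ by the single entry $\ell q + s_{i+\ell}$. Since $1 \leq s_{i+\ell} \leq q-1$, this new entry is not divisible by $q$, and all entries of $\fs$ that are $<q$ and not preceded immediately by a $q$ are already not divisible by $q$ (they lie in $\{1,\dots,q-1\}$). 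One checks this produces an element of $\INDw$: weight is preserved because $\ell q + s_{i+\ell} = \underbrace{q + \cdots + q}_{\ell} + s_{i+\ell}$, and no resulting part is a multiple of $q$.

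\textbf{Construction of the inverse $\INDw \to \ITw$.} Given $\fn = (n_1,\dots,n_m) \in \INDw$, replace each part $n_j$ by its "base-$q$ unfolding into the $\ITw$-shape'': write $n_j = a_j q + b_j$ with $a_j = \lfloor n_j/q \rfloor \geq 0$ and $b_j = n_j - a_j q \in \{1,\dots,q-1\}$ (this is possible precisely because $q \nmid n_j$, so $b_j \neq 0$ and $b_j < q$), and replace $n_j$ by the length-$(a_j+1)$ block $(\underbrace{q,\dots,q}_{a_j}, b_j)$. Concatenating these blocks for $j = 1,\dots,m$ gives a tuple all of whose entries are $\leq q$, and whose last entry is $b_m < q$; hence the result lies in $\ITw$, again with weight preserved. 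I would then verify that the two constructions are mutually inverse: the unfolding of $\ell q + s_{i+\ell}$ is exactly $(\underbrace{q,\dots,q}_{\ell}, s_{i+\ell})$ since $s_{i+\ell}\in\{1,\dots,q-1\}$, and conversely folding recovers the original large part. The empty-index case $w=0$ is handled by the convention $\IT_0 = \IND_0 = \{\emptyset\}$. The equality $|\INDw| = |\ITw|$ is then immediate for $w \geq 1$ (and trivial for $w = 0$).

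\textbf{Main obstacle.} There is no deep obstacle; the content is entirely bookkeeping. The one point that needs genuine care is the boundary/last-coordinate condition: one must make sure that the folding map never produces a tuple ending in $q$ (it does not, because a maximal block of $q$'s in an element of $\ITw$ is always followed by a strictly smaller entry, using $s_r < q$), and that the unfolding map's output always ends in $b_m$ with $1 \leq b_m \leq q-1$ (guaranteed by $q \nmid n_m$). A secondary subtlety is that the folding must be applied to \emph{maximal} blocks of $q$'s so that distinct blocks do not interact; making the scan left-to-right and maximal removes any ambiguity. Once these are pinned down, checking "$\wt$ preserved'' and "the two maps compose to the identity in both orders'' is routine and I would only sketch it.
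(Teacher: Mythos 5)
Your proposal is correct and is essentially the same as the paper's proof: the paper exhibits exactly the correspondence $(m_{1}q+n_{1},\ldots,m_{r}q+n_{r})\leftrightarrow(q^{\{m_{1}\}},n_{1},\ldots,q^{\{m_{r}\}},n_{r})$ with $m_{i}\geq 0$ and $1\leq n_{i}\leq q-1$, which is your base-$q$ unfolding/folding. Your write-up merely adds the routine verifications (well-definedness at the last coordinate, mutual inverseness) that the paper leaves implicit.
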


\begin{proof}
The desired result follows from the following correspondence
\begin{align*}
(m_{1} q + n_{1}, \ldots, m_{r} q + n_{r}) \longleftrightarrow (q^{\{ m_{1} \}}, n_{1}, \ldots, q^{\{ m_{r} \}}, n_{r}) \ \ \ (m_{i} \geq 0, \ 1 \leq n_{i} \leq q - 1),
\end{align*}
where $q^{\{m\}}$ denotes the sequence $(q, \ldots, q) \in \ZZ_{\geq 1}^{m}$. It is understood that in the case of $m=0$, $q^{\left\{0 \right\}}$ is referred to the empty index $\emptyset$.
\end{proof}

Given $\ell$ indices $\fs_{1},\ldots,\fs_{\ell} \in \cI$ with  $\fs_{i} = (s_{i1}, \ldots, s_{ir_{i}})$ ($1 \leq i \leq \ell$), we define
\begin{align}\label{E:s1,..,sl}
(\fs_{1}, \ldots \fs_{\ell}) := (s_{11}, \ldots, s_{1r_{1}}, s_{21}, \ldots, s_{2r_{2}}, \ldots, s_{\ell 1}, \ldots, s_{\ell r_{\ell}})
\end{align} to be the index obtained by putting the given indices consecutively.
For each non-empty index $\fs = (s_{1}, \ldots, s_{r}) \in \cI_{> 0}$, we define
\begin{equation}
\fs_{+} := (s_{1}, \ldots, s_{r - 1}) \ \textrm{and} \ \fs_{-} := (s_{2}, \ldots, s_{r}).
\end{equation}
In the depth one case, we note that $\fs_{+}=\emptyset$ and $\fs_{-}=\emptyset$.

Let $\cH = \bigoplus_{w \geq 0} \cH_{w}$ be the $k$-vector space with basis $\cI$ graded by weight and let $\cH_{> 0} := \bigoplus_{w > 0} \cH_{w}$.
For each index $\fs = (s_{1}, \ldots, s_{r}) \in \cI$, the corresponding generator in $\cH$ is denoted by $[\fs] = [s_{1}, \ldots, s_{r}]$ or $\fs$.
For each $P = \sum_{\fs \in \cI} a_{\fs} [\fs] \in \cH$, the support of $P$ is defined by
\begin{align*}
\Supp(P) := \{ \fs \in \cI \ | \ a_{\fs} \neq 0 \}.
\end{align*}

\begin{definition}\label{Def:multi-linear[,]}
Given a positive integer $\ell$, we define the following $k$-multilinear map

\[
[-,-, \ldots,-]:\cH^{\oplus \ell}\rightarrow \cH  \]
as follows. For any indices  $\fs_{1},\ldots,\fs_{\ell}\in \cI$, let $(\fs_{1},\ldots,\fs_{\ell})$ be given in~\eqref{E:s1,..,sl}. We define
\[ [\fs_{1},\ldots,\fs_{\ell}]:= [(\fs_{1},\ldots,\fs_{\ell})]\in \cH.
\] 
\end{definition}

\begin{remark}\label{Rem: [s,0]=0}
As the map above is multilinear, for any $\fs\in\cI$ we particularly have the following identity 
\[ [\fs,0]=0\in \cH \]
which will be used in the appendix.  As $\emptyset$ is also an index by our definition, we mention that 

\[[\fs,\emptyset] \neq [\fs,0]=0\in \cH. \]
\end{remark}

\subsection{The maps $\sLL$ and $\sLZ$}
Recall that the Carlitz logarithm is given by
\[\log_{C}(z):=\sum_{d\geq 0} \frac{z^{q^{d}}}{L_{d}} \in \power{k}{z} ,\]
where $L_{0} := 1$ and $L_{d} := (\theta - \theta^{q}) \cdots (\theta - \theta^{q^{d}})$ for $d \geq 1$ (see~\cite{Go96, T04}), and the $n$th Carlitz polylogarithm defined by Anderson-Thakur~\cite{AT90} is the following power series
\[ \Li_{n}(z):= \sum_{d \geq 0}\frac{z^{q^{d}}}{L_{d}^{n}}\in \power{k}{z}. \]
For any index $\fs=(s_{1},\ldots,s_{r})\in \cI_{>0}$ of positive depth, the $\fs$th Carlitz multiple polylogarithm (abbreviated as CMPL) is given by the following series (see~\cite{C14})

\begin{equation}\label{E:CMPL}
 \Li_{\fs}(z_{1},\ldots,z_{r}):=\sum_{d_{1} > \cdots > d_{r} \geq 0} \dfrac{z_{1}^{q^{d_1}} \cdots z_{r}^{q^{d_r}}}{L_{d_{1}}^{s_{1}} \cdots L_{d_{r}}^{s_{r}}}\in \power{k}{z_{1},\ldots,z_{r}} . 
\end{equation}

For each $s \in \ZZ_{\geq 1}$ and $d \in \ZZ_{\geq 0}$, we set
\begin{align*}
S^{\Li}_{d}(s) := \dfrac{1}{L_{d}^{s}}\in k
\ \ \textrm{and} \ \
S^{\zeta}_{d}(s) := \sum_{a \in A_{+, d}} \dfrac{1}{a^{d}}\in k,
\end{align*}
where $A_{+, d}$ is the set of monic polynomials in $A$ of degree $d$.
We then define $k$-linear maps $\sLB_{d}$, $\sLB_{< d}$ and $\sLB$ on $\cH$.

\begin{definition}\label{Def:sLB}
Let $\bullet \in \{ \Li, \zeta \}$ and $d \in \ZZ$.
\begin{enumerate}
\item
The map $\sLB_{d} \colon \cH \to k$ is the $k$-linear map defined by
\begin{align}\label{E:sLBd}
\sLB_{d}(\fs) &:= \left\{ \begin{array}{cl} {\displaystyle \sum_{d = d_{1} > \cdots > d_{r} \geq 0}} \SB_{d_{1}}(s_{1}) \cdots \SB_{d_{r}}(s_{r}) & (\fs = (s_{1}, \ldots, s_{r}) \in \cI_{> 0} \ \textrm{and} \ d \geq \dep(\fs) - 1) \\ 1 & (\fs = \emptyset \ \textrm{and} \ d = 0) \\ 0 & (\textrm{otherwise}) \end{array} \right.
\end{align}

\item
The map $\sLB_{< d} \colon \cH \to k$ is the $k$-linear map defined by
\begin{align}\label{E:sLB<d}
\sLB_{< d}(\fs) &:= \left\{ \begin{array}{cl} {\displaystyle \sum_{d > d_{1} > \cdots > d_{r} \geq 0}} \SB_{d_{1}}(s_{1}) \cdots \SB_{d_{r}}(s_{r}) & (\fs = (s_{1}, \ldots, s_{r}) \in \cI_{> 0} \ \textrm{and} \ d \geq \dep(\fs)) \\ 1 & (\fs = \emptyset \ \textrm{and} \ d \geq 1) \\ 0 & (\textrm{otherwise}) \end{array} \right.
\end{align}

\item
The map $\sLB \colon \cH \to k_{\infty}$ is the $k$-linear map defined by
\begin{align}\label{E:sLB}
\sLB(\fs) &:= \left\{ \begin{array}{cl} {\displaystyle \sum_{d_{1} > \cdots > d_{r} \geq 0}} \SB_{d_{1}}(s_{1}) \cdots \SB_{d_{r}}(s_{r}) & (\fs = (s_{1}, \ldots, s_{r}) \in \cI_{> 0}) \\ 1 & (\fs = \emptyset) \\ 0 & (\textrm{otherwise}). \end{array} \right.
\end{align}
\end{enumerate}
\end{definition}
It follows that for each $P \in \cH$, $\fs \in \cI$, $s \in \ZZ_{\geq 1}$ and $d \in \ZZ$, we have
\begin{align*}
\sLB_{d}(P) &= \sLB_{< d + 1}(P) - \sLB_{< d}(P), \\
\sLB_{< d}(P) &= \sum_{d' < d} \sLB_{d'}(P) = \sum_{0 \leq d' < d} \sLB_{d'}(P), \\
\sLB(P) &= \sum_{d \in \ZZ} \sLB_{d}(P) = \lim_{d \to \infty} \sLB_{< d}(P) \in k_{\infty}, \\
\sLB_{d}(s) \sLB_{< d}(P) &= \sLB_{d}([s, P]), \\
\sLL(\fs) &= \Li_{\fs}(\bone), \\
\sLZ(\fs) &= \zeta_{A}(\fs),
\end{align*}
where $\bf{1}$ is simply referred to $(1,\ldots,1)\in \ZZ_{>0}^{\dep \fs}$ when it is clear from the context without confusion, and we set $\Li_{\emptyset}(\bone) := \zeta_{A}(\emptyset) := 1$.

\begin{remark} \label{rmk-C=A}
For $1 \leq s \leq q$ and $d \geq 0$, 
we have the following equality due to Carlitz (see also \cite[Thm.~5.9.1]{T04})
\begin{align*}
\dfrac{1}{L_{d}^{s}} = \sum_{a \in A_{+, d}} \dfrac{1}{a^{s}}.
\end{align*}
Therefore, for each $\fs = (s_{1}, \ldots, s_{r})$ with $1 \leq s_{i} \leq q$ and $d \in \ZZ$, we have
\begin{align*}
\sLL_{d}(\fs) = \sLZ_{d}(\fs), \ \ \ \sLL_{< d}(\fs) = \sLZ_{< d}(\fs), 
\end{align*}
and
\begin{equation}\label{E:sL=zeta}
\Li_{\fs}(\bone)=\sLL(\fs) = \sLZ(\fs)=\zeta_{A}(\fs). 
\end{equation}

\end{remark}

\subsection{Product formulae}

The harmonic product on $\cH$ is denoted by $*$ or $\sLi$.
Thus $*$ is a $k$-bilinear map $\cH \times \cH \to \cH$ such that
\begin{align*}
&[\emptyset] * P = P * [\emptyset] = P, \\
&[\fs] * [\fn] = [s_{1}, \fs_{-} * \fn] + [n_{1}, \fs * \fn_{-}] + [s_{1} + n_{1}, \fs_{-} * \fn_{-}]
\end{align*}
for each $P \in \cH$, $\fs = (s_{1}, \fs_{-}), \fn = (n_{1}, \fn_{-}) \in I_{> 0}$.
For each $\fs, \fn \in I_{> 0}$, we put
\begin{equation}\label{E:DLi}
D^{\Li}_{\fs, \fn} := 0 \in \cH.
\end{equation}

For each $s, n \geq 1$, H.-J. Chen showed in \cite{Ch15} that
\begin{equation}\label{E:Chen}
\sLZ_{d}(s) \sLZ_{d}(n) = \sLZ_{d}(s + n) + \sum_{j = 1}^{s + n - 1} \Delta_{s, n}^{[j]} \sLZ_{d}(s + n - j, j),
\end{equation}
where we set
\begin{align*}
\Delta_{s, n}^{[j]} := \left\{ \begin{array}{ll} {\displaystyle (- 1)^{s - 1} \binom{j - 1}{s - 1} + (- 1)^{n - 1} \binom{j - 1}{n - 1}} & \textrm{if $(q - 1) \mid k$ and $1 \leq j < s + n$} \\ 0 & \textrm{otherwise} \end{array} \right..
\end{align*}
The $q$-shuffle product on $\cH$ is denoted by $\sz$.
Thus $\sz$ is a $k$-bilinear map $\cH \times \cH \to \cH$ such that
\begin{align*}
&[\emptyset] *^{\zeta} P = P *^{\zeta} [\emptyset] = P, \\
&[\fs] *^{\zeta} [\fn] = [s_{1}, \fs_{-} *^{\zeta} \fn] + [n_{1}, \fs *^{\zeta} \fn_{-}] + [s_{1} + n_{1}, \fs_{-} *^{\zeta} \fn_{-}] + D^{\zeta}_{\fs, \fn}
\end{align*}
for each $P \in \cH$, $ \fs = (s_{1}, \fs_{-}),\fn = (n_{1}, \fn_{-}) \in \cI_{> 0}$, where we set
\begin{equation}\label{E:DZetaS}
D^{\zeta}_{\fs, \fn} := \sum_{j = 1}^{s_{1} + n_{1} - 1} \Delta_{s_{1}, n_{1}}^{[j]} [s_{1} + n_{1} - j, (j) \sz (\fs_{-} *^{\zeta} \fn_{-})].
\end{equation}
By the induction on $w + w'$, we can show that $P *^{\bullet} Q \in \cH_{w + w'}$ for each $w, w' \in \ZZ_{\geq 0}$, $P \in \cH_{w}$, $Q \in \cH_{w'}$ and $\bullet \in \{ \Li, \zeta \}$. In particular, when $w, w' \in \ZZ_{\geq 1}$ we have $D^{\zeta}_{\fs, \fn} \in \cH_{w + w'}$ for each $\fs \in \cI_{w}$ and $\fn \in \cI_{w'}$.

\begin{remark} \label{remark-D-vanish}
When $s + n \leq q$, we observe that $\Delta_{s, n}^{[j]} = 0$ for all $j$.
Thus for $\fs = (s_{1}, \ldots), \fn = (n_{1}, \ldots) \in \cI_{> 0}$, if $s_{1} + n_{1} \leq q$ then we have $D^{\zeta}_{\fs, \fn} = 0$.
\end{remark}

The following product formulae are crucial when proving Theorem~\ref{theorem-algo}, whose detailed proof is given in the appendix. 
\begin{proposition}\label{P:product of sLbullet} Let $\bullet\in \left\{\Li, \zeta \right\}$. Given any $P,Q\in \cH$ and $\fs = (s_{1}, \fs_{-}), \fn = (n_{1}, \fn_{-}) \in \cI_{> 0}$, we have the following identities for each $d\in \ZZ$.
\begin{enumerate}
\item $\sLB(P) \sLB(Q) = \sLB(P *^{\bullet} Q)$.

\item $
\sLB_{< d}(P) \sLB_{< d}(Q) = \sLB_{< d}(P *^{\bullet} Q)$. 
\item
\begin{align*}
\sLB_{d}(\fs) \sLB_{d}(\fn) &= \sLB_{d}([s_{1} + n_{1}, \fs_{-} *^{\bullet} \fn_{-}]) + \sLB_{d}(D^{\bullet}_{\fs, \fn}) \\
&= \sLB_{d}(\fs *^{\bullet} \fn) - \sLB_{d}([s_{1}, \fs_{-} *^{\bullet} \fn]) - \sLB_{d}([n_{1}, \fs *^{\bullet} \fn_{-}]).
\end{align*}
\end{enumerate}

\end{proposition}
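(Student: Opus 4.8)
The plan is to prove all three identities simultaneously by induction on the total weight $w + w'$, where $P \in \cH_w$ and $Q \in \cH_{w'}$; by $k$-bilinearity of every map and product involved, it suffices to treat the case where $P = [\fs]$ and $Q = [\fn]$ are single generators (with the base case $\fs = \emptyset$ or $\fn = \emptyset$ being immediate from the normalization conventions, e.g. $\sLB_{<d}(\emptyset) = 1$ for $d \geq 1$ and $0$ otherwise, and $[\emptyset] *^\bullet P = P$). I would first establish (3), then derive (2) from (3) by summing over degrees, and finally derive (1) from (2) by taking the limit $d \to \infty$; alternatively (2) can be proved directly by the same inductive bookkeeping. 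The main combinatorial input is the degree-$d$ ``splitting'' of a product of truncated sums: writing $\sLB_{<d}(P)\sLB_{<d}(Q)$ and peeling off the top degree $d_1$ of each factor.

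For step (3), the key is the identity $\SB_d(s)\,\SB_d(n) = \SB_d(s+n) + \sum_{j} \Delta^{[j]}_{s,n}\,\SB_{d-?}(\ldots)$ — more precisely, in the $\Li$ case one has $S^{\Li}_d(s) S^{\Li}_d(n) = L_d^{-(s+n)} = S^{\Li}_d(s+n)$ directly (so $D^{\Li} = 0$, matching~\eqref{E:DLi}), while in the $\zeta$ case one uses H.-J. Chen's formula~\eqref{E:Chen}, $\sLZ_d(s)\sLZ_d(n) = \sLZ_d(s+n) + \sum_{j=1}^{s+n-1}\Delta^{[j]}_{s,n}\sLZ_d(s+n-j,j)$. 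Expanding $\sLB_d(\fs)\sLB_d(\fn)$ as a product of nested sums, the outermost indices $d_1, d_1'$ are both forced to equal $d$; one then writes $\SB_d(s_1)\SB_d(n_1)$ using the above, factors out the common head, and recognizes the remaining nested sum over $d > d_2 > \cdots$ and $d > d_2' > \cdots$ as $\sLB_{<d}(\fs_-)\,\sLB_{<d}(\fn_-)$, which equals $\sLB_{<d}(\fs_- *^\bullet \fn_-)$ by the induction hypothesis (part (2) in lower weight). Multiplying back by $\SB_d(s_1 + n_1)$ reconstitutes $\sLB_d([s_1+n_1, \fs_- *^\bullet \fn_-])$, and the $\Delta$-terms reconstitute $\sLB_d(D^\bullet_{\fs,\fn})$ using the definition~\eqref{E:DZetaS} together with the identity $\SB_d(m)\sLB_{<d}(R) = \sLB_d([m, R])$; this gives the first line of (3). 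The second line of (3) is then pure formal manipulation: it is exactly the defining recursion $[\fs]*^\bullet[\fn] = [s_1, \fs_- *^\bullet \fn] + [n_1, \fs *^\bullet \fn_-] + [s_1+n_1, \fs_- *^\bullet \fn_-] + D^\bullet_{\fs,\fn}$ with $\sLB_d$ applied to both sides.

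For step (2), assuming (3) in weight $\leq w + w'$, one expands
\[
\sLB_{<d}(\fs)\,\sLB_{<d}(\fn) = \Bigl(\sum_{d_1 < d} \sLB_{d_1}(\fs)\Bigr)\Bigl(\sum_{d_1' < d}\sLB_{d_1'}(\fn)\Bigr)
\]
and splits the double sum according to whether $d_1 > d_1'$, $d_1 < d_1'$, or $d_1 = d_1'$. The first region gives $\sum_{d_1 < d}\sLB_{d_1}(s_1)\sLB_{<d_1}(\fs_-)\sLB_{<d_1}(\fn) = \sLB_{<d}([s_1, \fs_- *^\bullet \fn])$ after applying the induction hypothesis (part (2) in lower weight) to $\sLB_{<d_1}(\fs_-)\sLB_{<d_1}(\fn)$; symmetrically for the second region; and the diagonal $d_1 = d_1'$ region is handled by (3). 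Summing the three contributions reproduces $\sLB_{<d}$ applied to the four-term recursion for $[\fs]*^\bullet[\fn]$, which is the claim. Finally (1) follows from (2) by letting $d \to \infty$, using that $\sLB(R) = \lim_{d\to\infty}\sLB_{<d}(R)$ in $k_\infty$ and that $*^\bullet$ preserves weight so everything stays in a fixed graded piece.

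The main obstacle I anticipate is the careful bookkeeping in step (3) for the $\zeta$-case: one must verify that the $\Delta^{[j]}_{s_1,n_1}$-terms produced by Chen's formula~\eqref{E:Chen}, after being multiplied by the inductively-simplified tail $\sLB_{<d}(\fs_- *^\zeta \fn_-)$, assemble precisely into $\sLB_d(D^\zeta_{\fs,\fn})$ as defined in~\eqref{E:DZetaS} — in particular that the nested structure $[s_1+n_1-j, (j) *^\zeta (\fs_- *^\zeta \fn_-)]$ matches, which requires knowing $\SB_d(j)\,\sLB_{<d}(\fs_- *^\zeta \fn_-) = \sLB_d([j, \fs_- *^\zeta \fn_-])$ and that $(j) *^\zeta (\fs_- *^\zeta \fn_-)$ is computed correctly when $j$ itself may be $\geq q$ (so that further $D^\zeta$ corrections could in principle appear — but they are already absorbed into the recursive definition of $*^\zeta$). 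Everything else is a routine, if somewhat lengthy, induction; there is no analytic subtlety since all sums are finite at each fixed degree and convergence in $k_\infty$ is automatic from the non-archimedean absolute value. Since the excerpt defers the full proof to the appendix, I would present this as the skeleton and relegate the complete degree-by-degree verification there.
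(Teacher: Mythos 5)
Your proposal is correct and follows essentially the same route as the paper's proof: the same three-way splitting of $\sLB_{<d}(\fs)\sLB_{<d}(\fn)$ according to the top degrees, the same use of Chen's formula~\eqref{E:Chen} and of the identity $\SB_{d}(m)\sLB_{<d}(R)=\sLB_{d}([m,R])$ to assemble $D^{\bullet}_{\fs,\fn}$, and the same passage to the limit $d\to\infty$ for (1). The only (inessential) difference is bookkeeping: the paper runs the induction on $d$ (deriving (3) from (2) at the same $d$ and using both at $d_{1}<d$), whereas you induct on the total weight, proving (3) first from lower-weight instances of (2); both schemes are sound and the computations are identical.
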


\begin{proof}
We first mention that (3) for $d$ follows from the second one for the same $d$.
Indeed, for each $\fs = (s_{1}, \ldots), \fn = (n_{1}, \ldots) \in \cI_{> 0}$ we have
\begin{align*}
\sLL_{d}(\fs) \sLL_{d}(\fn)
&= \sLL_{d}(s_{1}) \sLL_{d}(n_{1}) \sLL_{< d}(\fs_{-}) \sLL_{< d}(\fn_{-}) \\
&= \sLL_{d}(s_{1} + n_{1}) \sLL_{< d}(\fs_{-} *^{\Li} \fn_{-}) \\
&= \sLL_{d}([s_{1} + n_{1}, \fs_{-} *^{\Li} \fn_{-}]) + \sLL_{d}(D^{\Li}_{\fs, \fn}),
 \end{align*}
where the second equality comes from (2) and the definition of $\sLL_{d}$, and the third identity comes from \eqref{E:DLi}. Similarly we have
\begin{align*}
\sLZ_{d}(\fs) \sLZ_{d}(\fn)
&= \sLZ_{d}(s_{1}) \sLZ_{d}(n_{1}) \sLZ_{< d}(\fs_{-}) \sLZ_{< d}(\fn_{-}) \\
&= \left( \sLZ_{d}(s_{1} + n_{1}) + \sum_{j} \Delta_{s_{1}, n_{1}}^{j} \sLZ_{d}(s_{1} + n_{1} - j) \sLZ_{< d}(j) \right) \sLZ_{< d}(\fs_{-} *^{\zeta} \fn_{-}) \\
&= \sLZ_{d}([s_{1} + n_{1}, \fs_{-} *^{\zeta} \fn_{-}]) + \sum_{j} \Delta_{s_{1}, n_{1}}^{[j]} \sLZ_{d}(s_{1} + n_{1} - j) \sLZ_{< d}((j) *^{\zeta} (\fs_{-} *^{\zeta} \fn_{-}) ) \\
&= \sLZ_{d}([s_{1} + n_{1}, \fs_{-} *^{\zeta} \fn_{-}]) + \sLZ(D^{\zeta}_{\fs, \fn}),
 \end{align*}
where the second equality comes from~\eqref{E:Chen} and (2), the third identity comes from (2) and~\eqref{E:DZetaS}.

To prove the formula~(2), we first mention that when $d \leq 0$, the formula holds as both sides of the identity are zero. We then prove the  formula~(2) by induction on $d$. Now, let $d$ be any positive integer.
By bi-linearity, we may assume that $P = \fs, Q = \fn \in \cI_{> 0}$.
Then we have
\begin{align*}
&\sLB_{< d}(\fs) \sLB_{< d}(\fn) \\
&= \sum_{d_{1} < d} \sLB_{d_{1}}(s_{1}) \sLB_{< d_{1}}(\fs_{-}) \sLB_{< d_{1}}(\fn)
+ \sum_{d_{1} < d} \sLB_{d_{1}}(n_{1}) \sLB_{< d_{1}}(\fs) \sLB_{< d_{1}}(\fn_{-})
+ \sum_{d_{1} < d} \sLB_{d_{1}}(\fs) \sLB_{d_{1}}(\fn) \\
&= \sum_{d_{1} < d} \sLB_{d_{1}}(s_{1}) \sLB_{< d_{1}}(\fs_{-} *^{\bullet} \fn)
+ \sum_{d_{1} < d} \sLB_{d_{1}}(n_{1}) \sLB_{< d_{1}}(\fs *^{\bullet} \fn_{-}) \\
& \ \ \ + \sum_{d_{1} < d} \sLB_{d_{1}}([s_{1} + n_{1}, \fs_{-} *^{\bullet} \fn_{-}] + D^{\bullet}_{\fs, \fn}) \\
&= \sLB_{< d}(\fs *^{\bullet} \fn),
\end{align*}
where the second equality comes from the induction hypothesis as well as (3) (as (2) holds for $d_1<d$ by induction hypothesis).
Finally, the formula~(1) follows from (2) by taking the limit $d \to \infty$.
\end{proof}

\section{Generators}\label{Sec: Generators}
The purpose of this section is to establish Theorem~\ref{theorem-algo}, which allows us to obtain the desired generating sets for $\cZ_w$. To achieve it, we need to set up the box-plus operator on $\cH^{\oplus 2}$ as well as the $k$-linear maps $\sUB$ on $\cH$.

\subsection{The box-plus operator}
\begin{definition}\label{Def:boxplus}

Let $\boxplus \colon \cH^{\oplus 2} \to \cH$ be the $k$-linear map defined by
\begin{align*}
\emptyset \boxplus P = P \boxplus \emptyset := 0
\ \ \textrm{and} \ \
\fs \boxplus \fn := (\fs_{+}, s_{r} + n_{1}, \fn_{-})
\end{align*}
for $P \in \cH$ and $\fs = (\fs_{+}, s_{r}), \fn = (n_{1}, \fn_{-}) \in \cI_{> 0}$. 
\end{definition}

\begin{remark}
To avoid confusion, we mention that for each $n \geq 1$ and $P = \sum_{\fs \in \cI} a_{\fs} [\fs] \in \cH$, 
\begin{align*}
[n, P] & = \sum_{\fs = (s_{1}, \ldots, s_{r}) \in \cI} a_{\fs} [n, s_{1}, \ldots, s_{r}] \in \cH_{> 0}, \\
(n) \boxplus P & = \sum_{\fs = (s_{1}, \ldots, s_{r}) \in \cI_{> 0}} a_{\fs} [n + s_{1}, s_{2}, \ldots, s_{r}] \in \cH_{> 0} \ \ \ (\neq [n] + P).
\end{align*}
Furthermore,  in the depth one case for $\fs=(s)$ and $\fn=(n)$, we have $\fs_{+}=\emptyset=\fn_{-}$ and hence in this case \[\fs \boxplus \fn=[s+n].\]
\end{remark}

We define a $k$-linear endomorphism $\sU^{\bullet} \colon \cH \to \cH$ as follows.
Let $\fs \in \cI$. 
We write 
\begin{equation}\label{E:s^T}
\fs = (s_{1}, \ldots ) = (\fs^{\rT}, q^{\{ m \}}, \fs')
\end{equation}
with $\fs^{\rT} \in \IT$, $m \geq 0$, and $\fs' = (s'_{1}, \ldots)$ with $s'_{1} > q$ or $\fs' = \emptyset$. 
When $\fs' \neq \emptyset$, we set
\begin{equation}\label{E:s''}
\fs''  := (s'_{1} - q, \fs'_{-}).
\end{equation}

\begin{example}
    Let $\fs=(q-1,q,q,q+1,1)$. Then $\fs^{\rT}=(q-1)$, $m=2$, and $\fs'=(q+1,1)$. Since $\fs'\neq\emptyset$, we also have $\fs''=(1,1)$.
\end{example}

\subsection{Generating sets}

We set 
\begin{equation}\label{E:alpha}
\alpha^{\bullet}_{q}(P) := [1, (q - 1) *^{\bullet} P]
\end{equation}
for each $P \in \cH$. For $m=0$, $\alpha_{q}^{\bullet, 0}$ is defined to be the identity map on $\cH$, and for $m\in \ZZ_{>0}$, $\alpha^{\bullet,m}_{q}$  is defined to be the $m$th iteration of $\alpha^{\bullet}_{q}$.

\begin{definition} \label{def-sU}
For $\bullet\in \left\{  \Li, \zeta \right\}$, we define the $k$-linear map $\sUB:\cH\rightarrow \cH$ given by
\begin{align*}
\sUB(\fs) := \left\{ \begin{array}{@{}ll}
- [\fs^{\rT}, q^{\{m + 1\}}, \fs''] + L_{1}^{m + 1} [\fs^{\rT}, \alpha_{q}^{\bullet, m + 1}(\fs'')] & \\
\hspace{5.0em} + L_{1}^{m + 1} (\fs^{\rT} \boxplus \alpha_{q}^{\bullet, m + 1}(\fs'')) - [\fs^{\rT}, q^{\{m\}}, D^{\bullet}_{q, \fs''}] & (\fs' \neq \emptyset) \\[0.5em]
L_{1}^{m} [\fs^{\rT}, \alpha_{q}^{\bullet, m}(\emptyset)] + L_{1}^{m} (\fs^{\rT} \boxplus \alpha_{q}^{\bullet, m}(\emptyset)) & (\fs' = \emptyset)
\end{array} \right.
\end{align*}
\end{definition}
\begin{remark}\label{Rem:Us=s}
From the definition, one sees that $\sU^{\bullet}(\cH_{w}) \subset \cH_{w}$. We also mention that when $\fs\in \IT$, then $\sUB(\fs)=\fs$ since in this case, we have $m=0$ and $\fs'=\emptyset$.

\end{remark}

\begin{theorem} \label{theorem-algo}
Let  $\bullet\in \left\{ \Li, \zeta \right\}$. For each $P \in \cH$, the following statements hold.
\begin{enumerate}
\item $\sLB(\sUB(P)) = \sLB(P)$.
\item There exists an explicit integer $e \geq 0$ such that $\Supp((\sU^{\bullet})^{e}(P)) \subset \IT$, where $(\sU^{\bullet})^{0}$ is defined to be the identity map and for $e\in\ZZ_{>0}$, $(\sU^{\bullet})^{e}$ is defined to be the $e$th iteration of $\sU^{\bullet}$.
\end{enumerate}
\end{theorem}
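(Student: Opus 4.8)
\textbf{Proof plan for Theorem~\ref{theorem-algo}.}

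The plan is to establish (1) directly from the product formulae of Proposition~\ref{P:product of sLbullet}, and (2) by introducing a suitable monovariant (a ``size'' or complexity measure on indices) that strictly decreases under $\sU^{\bullet}$ until the support lands in $\IT$.

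For part (1), by $k$-linearity it suffices to check $\sLB(\sUB(\fs)) = \sLB(\fs)$ for each index $\fs \in \cI$, and by Remark~\ref{Rem:Us=s} we may assume $\fs \notin \IT$, so that $\fs = (\fs^{\rT}, q^{\{m\}}, \fs')$ with either $\fs' \neq \emptyset$ or $m \geq 1$. The key computational identity is to evaluate $\sLB$ on the generator $[\fs^{\rT}, q^{\{m\}}, s'_1, \fs'_-]$ by peeling off the block $q^{\{m\}}$ and the leading term $s'_1 = (s'_1 - q) + q$ of $\fs'$. Concretely, one uses the relation $\sLB_d(s)\,\sLB_{<d}(P) = \sLB_d([s,P])$ together with Carlitz's identity $1/L_d^q = \sLZ_d(q)$ (valid for $\bullet=\zeta$ too, since $q \leq q$) to rewrite each factor of $q$ in the block, and then applies Proposition~\ref{P:product of sLbullet}(3) to the pair $(q), \fs''$ to produce the terms $[\fs^{\rT},\alpha_q^{\bullet,m+1}(\fs'')]$, $\fs^{\rT}\boxplus\alpha_q^{\bullet,m+1}(\fs'')$, and the correction $[\fs^{\rT}, q^{\{m\}}, D^{\bullet}_{q,\fs''}]$, weighted by the appropriate power of $L_1$ (which appears because iterating $\alpha_q^{\bullet}$ introduces a factor $L_1$ at each step via $1/L_d^q$ versus the normalization in $\SB_d(q)$). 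Matching these against the definition of $\sUB(\fs)$ in Definition~\ref{def-sU} gives $\sLB(\sUB(\fs)) = \sLB(\fs)$. The case $\fs' = \emptyset$ is the same computation without the $D^{\bullet}$ term and without the subtracted generator. This is a bookkeeping argument; the one genuinely delicate point is tracking the powers of $L_1$ correctly, and ensuring the identity holds at the level of the partial sums $\sLB_{<d}$ (not merely in the limit), which is needed so that the argument is uniform in $\bullet$.

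For part (2), the plan is to assign to each index $\fs = (\fs^{\rT}, q^{\{m\}}, \fs')$ a complexity $\Size(\fs) \in \ZZ_{\geq 0}$ — for instance a lexicographic-type quantity built from the weight of $\fs'$ (or the number of entries exceeding $q$, suitably weighted), which vanishes exactly when $\fs \in \IT$ — and to verify from Definition~\ref{def-sU} that every index in $\Supp(\sU^{\bullet}(\fs))$ has strictly smaller complexity than $\fs$ whenever $\fs \notin \IT$. Inspecting the four types of terms on the right-hand side of $\sUB(\fs)$: the term $[\fs^{\rT}, q^{\{m+1\}}, \fs'']$ replaces the leading entry $s'_1 > q$ of $\fs'$ by $s'_1 - q$, strictly reducing the ``excess'' $\sum_{s_i > q}(s_i - q)$; the terms $[\fs^{\rT},\alpha_q^{\bullet,m+1}(\fs'')]$ and $\fs^{\rT}\boxplus\alpha_q^{\bullet,m+1}(\fs'')$ involve $\alpha_q^{\bullet,m+1}(\fs'')$, and here one must argue that $\alpha_q^{\bullet}(P) = [1,(q-1)*^{\bullet}P]$ does not increase the relevant complexity — this uses that $*^{\bullet}$ preserves weight and that the harmonic/$q$-shuffle product of $(q-1)$ with an index produces entries bounded appropriately (any entry exceeding $q$ in the product must come from merging, and the bookkeeping shows the excess does not grow) — and finally the term $[\fs^{\rT}, q^{\{m\}}, D^{\bullet}_{q,\fs''}]$ is handled via the structure of $D^{\bullet}_{q,\fs''}$ from \eqref{E:DZetaS}, whose leading entries are of the form $q - j < q$ for $1 \leq j \leq$ (something), hence strictly smaller. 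Iterating, after finitely many — indeed an explicitly bounded number of — applications of $\sU^{\bullet}$ the complexity reaches $0$ on the entire support, giving the explicit $e$; tracking the bound through the $D^{\bullet}$ and $\alpha_q^{\bullet}$ contributions yields the ``explicit'' claim.

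The main obstacle I anticipate is in part (2), specifically controlling the $\alpha_q^{\bullet,m+1}(\fs'')$ terms: the harmonic and $q$-shuffle products can a priori create new entries larger than $q$ (from summing two entries), so one must choose the complexity measure carefully enough that such merges are still ``paid for'' by the decrease elsewhere, and one must verify that repeated application of $\alpha_q^{\bullet}$ (the $m+1$-fold iterate) does not cause the complexity to blow up. A clean way to handle this, which I would pursue, is to define the complexity using the weight of the ``tail beyond the first $\IT$-block'' as the dominant term and the depth (or a secondary statistic) as a tie-breaker, and to prove by a separate induction that $\Supp(\alpha_q^{\bullet,m}(\fs''))$ consists only of indices whose complexity is at most $\wt(\fs'') + \wt((q-1)^{\{m\}}) = \wt(\fs'') + m(q-1)$, with the crucial gain being that the number of entries $> q$ strictly drops. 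Since the detailed verification is lengthy and follows the strategy of \cite{ND21}, I would relegate the full argument to the appendix, as the authors indicate.
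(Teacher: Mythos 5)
Your plan has genuine gaps in both parts.

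\textbf{Part (1).} The decisive input is missing. Carlitz's formula $1/L_{d}^{s}=\sum_{a\in A_{+,d}}a^{-s}$ for $s\leq q$ (Remark~\ref{rmk-C=A}) only identifies the two realizations $\sLL_{d}$ and $\sLZ_{d}$ on entries $\leq q$; it gives no recursion in $d$ and no factor of $L_{1}$. The identity that actually drives the whole construction is Thakur's relation $\sLB_{d}(q)=L_{1}\,\sLB_{d+1}(1,q-1)$ for all $d$, i.e.\ the binary relation $R_{1}=([q],-L_{1}[1,q-1])\in\cP^{\bullet}_{q}$ of \eqref{E:R1}, taken from \cite{T09b}. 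This is the unique source of the powers $L_{1}^{m+1}$, of the shift $d\mapsto d+1$, and of the specific shape $\alpha^{\bullet}_{q}(P)=[1,(q-1)*^{\bullet}P]$ appearing in $\sUB$. Attributing the $L_{1}$'s to ``$1/L_{d}^{q}$ versus the normalization in $\SB_{d}(q)$'' is not an identity: $S^{\Li}_{d}(q)=1/L_{d}^{q}$ \emph{is} the normalization, and without $R_{1}$ there is no relation available that rewrites $\sLB_{d}(q)$ in terms of indices in $\IT$, so your computation cannot produce the terms of Definition~\ref{def-sU}. Once $R_{1}$ is granted, your computation is essentially the paper's proof of Theorem~\ref{theorem-sLsU} unrolled (the paper packages it by applying the operators $\sB^{\bullet}_{\fs}$, $\sC^{\bullet}_{\fs}$, $\sBC^{\bullet,m}_{q}$, which preserve $\cP^{\bullet}$, to $R_{1}$), so the route is not genuinely different --- it is the same route with its engine removed.

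\textbf{Part (2).} None of the complexity measures you propose is monotone under $\sU^{\bullet}$. The excess $\sum_{s_{i}>q}(s_{i}-q)$ and the number of entries exceeding $q$ can both \emph{increase}: already for $m=0$, if $\fs''=(s'_{1}-q,\ldots)$ with $2\leq s'_{1}-q\leq q$, then $(q-1)*^{\bullet}\fs''$ contains $(s'_{1}-1,\ldots)$ with $s'_{1}-1>q$, a new large entry where $\fs''$ had none; so the ``crucial gain'' you rely on is false. The weight of the tail beyond the first $\IT$-block is also not under control: the terms of $[\fs^{\rT},\alpha_{q}^{\bullet,m+1}(\fs'')]$ and of $\fs^{\rT}\boxplus\alpha_{q}^{\bullet,m+1}(\fs'')$ carry total weight $\wt(\fs')+mq$ after the prefix, and for larger $m$ the iterated products can place an entry $>q$ very early in these indices, so the new tail can be heavier than $\wt(\fs')$. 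The paper's termination argument (Lemma~\ref{lemma-123} and Theorem~\ref{theorem-sUe}) uses an \emph{increasing} monovariant of a different kind: the triple $\bigl(\dep(\fn);\,\Init(\fn);\,-n_{\dep(\Init(\fn))+1}\bigr)$ in lexicographic order, valued in a finite totally ordered set. The three pieces of $\sUB$ are controlled separately: $\sUB_{1}$ strictly increases depth (via Corollary~\ref{cor-depth}); $\sUB_{2}$ preserves depth but strictly increases $\Init$, because $\boxplus$ bumps the last entry $s_{j}<q$ of $\fs^{\rT}$ to $s_{j}+1\leq q$; $\sUB_{3}$ preserves depth and $\Init$ but strictly decreases the entry immediately after the initial block, since $D^{\bullet}_{q,\fs''}$ has leading entries $s'_{1}-j<s'_{1}$. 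Finding this (or an equivalent) statistic is the actual content of part (2); the ones in your plan do not suffice.
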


Note that our $\sUB$ is concrete and explicit, and simultaneously deals with the two products $*^{\Li}$ and $*^{\zeta}$.
Our strategy  for proving Theorem \ref{theorem-algo} arises from~\cite{ND21}, and so we leave the detailed proof to the appendix. However, we give an outline of the proof of Theorem \ref{theorem-algo} (1) as follows. Let $\bullet\in \left\{\Li, \zeta \right\}$.
\begin{enumerate}
\item[(I)] We first define the space of {\it{binary relations}} $\cP^{\bullet}\subset \cH^{\oplus 2}$ in~\eqref{E:Pbullet}, and note that for $(P,Q)\in \cP^{\bullet}$, we have
\[\sLB(P, Q) := \sLB(P)+\sLB(Q)=0  .\] 

\item[(II)] For any $\fs\in \cI_{>0}$ and integer $m\geq 0$, we define in Sec.~\ref{Sec:Maps B,C,BC} the following maps
\[ \sB_{\fs}^{\bullet},\sC_{\fs}^{\bullet},\sBC_{q}^{\bullet,m}:\cH_{>0}^{\oplus 2}\rightarrow \cH_{>0}^{\oplus 2}, \]
and further show in Proposition~\ref{prop-BC} that they preserve $\cP^{\bullet}$. 
\item[(III)] In order to give a clear outline, we drop the subscripts to avoid heavy notation as all details are given in the proof of Theorem~\ref{theorem-sLsU}. We start with $R_{1}\in \cP_{q}^{\bullet}$, and consider $\sDB(R_{1})$ for choosing a suitable $\sDB$ arising from one of $\sB^{\bullet}, \sC^{\bullet}, \sBC^{\bullet}$ as well as their composites (see~\eqref{E:Appendix EQ1}, \eqref{E:Appendix EQ2}, \eqref{E:Appendix EQ3} and \eqref{E:Appendix EQ4}).  Based on (II), we have $\sDB(R_{1})\in \cP^{\bullet}$ and hence $\sLB(\sDB(R_{1}))=0$. In this case, the expansion of $\sLB(\sDB(R_{1}))$ is equal to $\sLB(\fs)-\sLB(\sUB(\fs))$, and hence $\sLB(\fs)=\sLB(\sUB(\fs))$ as desired.
\end{enumerate}

As a consequence of Theorem~\ref{theorem-algo}, we obtain the following important equality. 
\begin{corollary}\label{Cor:generating set} Let $\bullet\in \left\{\Li, \zeta\right\}$. For any $w\geq 0$, we let  $\cZ_{w}^{\bullet} $ be the $k$-vector space spanned by the elements $\sLB(\fs)\in k_{\infty}$  for $\fs \in \cI_{w}$. Then 
\[\left\{ \sLB(\fs)| \fs\in \ITw \right\}\] is a generating set for  $\cZ_{w}^{\bullet} $. In particular, we have \[\cZ^{\Li}_{w}=\cZ_{w}^{\zeta} \ (=\cZ_{w}) .\]
\end{corollary}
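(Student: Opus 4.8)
\textbf{Proof plan for Corollary~\ref{Cor:generating set}.}
The plan is to deduce the corollary directly from Theorem~\ref{theorem-algo}, which does all the real work. Fix $\bullet \in \{\Li, \zeta\}$ and a weight $w \geq 0$. By definition, $\cZ_w^{\bullet}$ is spanned by the values $\sLB(\fs)$ for $\fs$ ranging over the (finite) set $\cI_w$, so the first task is to show that each such $\sLB(\fs)$ lies in the $k$-span of $\{\sLB(\ft) : \ft \in \ITw\}$. Given $\fs \in \cI_w$, apply Theorem~\ref{theorem-algo}(2): there is an explicit $e \geq 0$ with $\Supp((\sU^{\bullet})^e(\fs)) \subset \IT$. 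Since $\sU^{\bullet}$ preserves weight (Remark~\ref{Rem:Us=s}, i.e.\ $\sU^{\bullet}(\cH_w) \subset \cH_w$), in fact $(\sU^{\bullet})^e(\fs) \in \cH_w$, so its support lies in $\IT \cap \cI_w = \ITw$. Write $(\sU^{\bullet})^e(\fs) = \sum_{\ft \in \ITw} c_{\ft} [\ft]$ with $c_{\ft} \in k$. Applying $\sLB$ and using $k$-linearity together with Theorem~\ref{theorem-algo}(1) iterated $e$ times (so that $\sLB((\sU^{\bullet})^e(\fs)) = \sLB(\fs)$), we obtain
\[
\sLB(\fs) = \sum_{\ft \in \ITw} c_{\ft}\, \sLB(\ft),
\]
which exhibits $\sLB(\fs)$ in the desired span. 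Since the $\sLB(\fs)$, $\fs \in \cI_w$, span $\cZ_w^{\bullet}$, this proves $\{\sLB(\ft) : \ft \in \ITw\}$ is a generating set; moreover each $\ft \in \ITw$ is itself in $\cI_w$, so these generators genuinely lie in $\cZ_w^{\bullet}$, and we get equality of the span.

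For the "in particular" statement, observe that for $\ft \in \ITw$ we have $t_i \leq q$ for $1 \leq i \leq r-1$ and $t_r < q$, so in particular every entry of $\ft$ is $\leq q$; hence Remark~\ref{rmk-C=A} (equation~\eqref{E:sL=zeta}) applies and gives $\sLL(\ft) = \sLZ(\ft)$. Therefore the two generating sets $\{\sLL(\ft) : \ft \in \ITw\}$ and $\{\sLZ(\ft) : \ft \in \ITw\}$ are literally equal as subsets of $k_\infty$, so $\cZ_w^{\Li} = \cZ_w^{\zeta}$. Finally, $\cZ_w^{\zeta}$ is by definition the $k$-span of $\{\zeta_A(\fs) : \fs \in \cI_w\} = \{\sLZ(\fs) : \fs \in \cI_w\}$, which is exactly $\cZ_w$ as defined in the introduction; this justifies the parenthetical identification $\cZ_w^{\Li} = \cZ_w^{\zeta} = \cZ_w$.

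I do not expect any genuine obstacle here: the corollary is a formal consequence of Theorem~\ref{theorem-algo} plus the weight-preservation of $\sU^{\bullet}$ and the Carlitz identity of Remark~\ref{rmk-C=A}. The only points requiring a line of care are (i) checking that iterating $\sU^{\bullet}$ stays inside a single graded piece $\cH_w$ so the support lands in $\ITw$ rather than just $\IT$, and (ii) noting that $\sLB$ is genuinely $k$-linear (part of Definition~\ref{Def:sLB}) so that it commutes with taking finite $k$-linear combinations of generators. All the difficulty is deferred to the appendix, where Theorem~\ref{theorem-algo} is proved.
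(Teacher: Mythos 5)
Your proposal is correct and follows exactly the route the paper takes: the paper's own (very brief) proof simply cites Theorem~\ref{theorem-algo} for the generating statement and the identity $\sLL(\fs)=\Li_{\fs}(\bone)=\zeta_{A}(\fs)=\sLZ(\fs)$ for $\fs\in\ITw$ (Remark~\ref{rmk-C=A}) for the equality $\cZ^{\Li}_{w}=\cZ_{w}^{\zeta}$. You have merely made explicit the routine details (weight preservation of $\sUB$, $k$-linearity of $\sLB$, iteration of Theorem~\ref{theorem-algo}(1)) that the paper leaves implicit.
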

\begin{proof}
The first assertion comes from Theorem~\ref{theorem-algo}. We note that for $\fs\in \ITw$, we have \[\sLL(\fs):= \Li_{\fs}(\bone)=\zeta_{A}(\fs) =:\sLZ(\fs),\]
whence obtaining $\cZ_{w}^{\Li}= \cZ_{w}^{\zeta}$. 
\end{proof}

\begin{theorem} \label{theorem-generator}
The $k$-vector space $\cZ_{w}$ is spanned by the elements $\Li_{\fs}(\bone)$ for $\fs \in \INDw$.
\end{theorem}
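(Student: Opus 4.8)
The plan is to reduce the statement to Corollary~\ref{Cor:generating set} together with Theorem~\ref{theorem-algo}~(2) applied to the Carlitz multiple polylogarithm realization $\sLL$, by showing that every $\fs\in\ITw$ can be rewritten, within $\cZ_w$, as a $k$-linear combination of the values $\Li_{\fn}(\bone)=\sLL(\fn)$ with $\fn\in\INDw$. Since Corollary~\ref{Cor:generating set} already tells us that $\left\{\sLL(\fs)\mid \fs\in\ITw\right\}$ generates $\cZ_w=\cZ^{\Li}_w$, it suffices to exhibit such a rewriting. First I would recall the bijection of Proposition~\ref{Pop:|ITw|=|INDw|}, which sends $\fs^{\rT}=(q^{\{m_1\}},n_1,\ldots,q^{\{m_r\}},n_r)\in\ITw$ to $\fn=(m_1q+n_1,\ldots,m_rq+n_r)\in\INDw$. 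So the concrete task is: express $\sLL$ of a ``Thakur-type'' index as a $k$-combination of $\sLL$ of the corresponding ``ND-type'' indices.

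The mechanism to do this should come from running $\sU^{\Li}$ in the \emph{reverse} direction. More precisely, I would argue by a downward induction on the number of entries $s_i$ of $\fn\in\INDw$ that are $\geq q$ (equivalently, on a suitable complexity statistic), using the stuffle/$q$-shuffle product identities of Proposition~\ref{P:product of sLbullet} to trade a large entry for smaller ones at the cost of introducing indices with strictly smaller complexity. Concretely, for $\fn=(s_1,\ldots,s_r)\in\INDw$ with some $s_i\geq q$, write $s_i=mq+n$ with $m\geq 1$ and $1\leq n\leq q-1$; the key identity to establish is one expressing $\sLL(\fn)$ in terms of $\sLL([\ldots,q^{\{m\}},n,\ldots])$ (the corresponding partially-expanded index) plus a $k$-linear combination of $\sLL$ of indices that are ``more expanded'' but of the same weight and strictly smaller in the complexity measure --- this is exactly the content, read backwards, of the defining formula for $\sU^{\Li}$ in Definition~\ref{def-sU}, combined with $\sLL(\sU^{\Li}(P))=\sLL(P)$ from Theorem~\ref{theorem-algo}~(1). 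Iterating, one reaches indices all of whose entries lie in $\{q^{\{m\}},n\}$-blocks, i.e.\ indices in $\ITw$ under the correspondence; running the bijection of Proposition~\ref{Pop:|ITw|=|INDw|} then shows $\sLL(\fs^{\rT})$ for $\fs^{\rT}\in\ITw$ lies in the span of $\left\{\sLL(\fn)\mid\fn\in\INDw\right\}$, which by Corollary~\ref{Cor:generating set} is all of $\cZ_w$.

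The main obstacle I anticipate is bookkeeping the induction so that each application of Proposition~\ref{P:product of sLbullet} genuinely decreases the chosen complexity statistic while staying inside $\cI_w$; the stuffle product $\fs_-*^{\Li}\fn_-$ and the $D^{\Li}_{q,\fs''}$ term (which vanishes for $\Li$ by \eqref{E:DLi}) produce many new indices, and one must check that none of them is ``worse'' than $\fn$ in the ordering. The natural remedy is to lift the entire argument to the formal space $\cH$ and reuse the termination analysis already carried out for Theorem~\ref{theorem-algo}~(2): there one shows $(\sU^{\Li})^e(P)$ is eventually supported on $\IT$, so the same monovariant, applied in reverse, certifies that the expansion of an $\ITw$-generator in terms of $\INDw$-values terminates. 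Thus the hard combinatorial work is already encapsulated in the appendix proof of Theorem~\ref{theorem-algo}, and the present theorem follows by combining that with \eqref{E:sL=zeta}, Corollary~\ref{Cor:generating set}, and Proposition~\ref{Pop:|ITw|=|INDw|}; once those are in place the proof is short.
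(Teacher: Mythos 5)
Your overall direction matches the paper's (use Corollary~\ref{Cor:generating set}, the map $\sU^{\Li}$, and the bijection of Proposition~\ref{Pop:|ITw|=|INDw|}), but there is a genuine gap at the decisive step. What iterating $\sU^{\Li}$ together with $\sLL(\sU^{\Li}(P))=\sLL(P)$ actually produces is an expression of each $\Li_{\fn}(\bone)$ with $\fn\in\INDw$ as a $k$-linear combination of the values $\sLL(\fs^{\rT})$, $\fs^{\rT}\in\ITw$ --- i.e.\ it re-proves that the $\ITw$-values span, which is Corollary~\ref{Cor:generating set} again. The sentence ``running the bijection of Proposition~\ref{Pop:|ITw|=|INDw|} then shows $\sLL(\fs^{\rT})$ lies in the span of the $\INDw$-values'' is a non sequitur: the bijection is only a matching of index sets, and to pass from ``ND-values are combinations of T-values'' to ``T-values are combinations of ND-values'' you must invert a square linear system, which your proposal never justifies. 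The attempted remedy by downward induction is circular: when you rearrange $\sLL(\fn)=\sLL(\sU^{\Li}(\fn))$ to solve for the partially expanded index, the correction terms are $\sLL$ of arbitrary weight-$w$ indices (coming from $\alpha^{\Li}_{q}$ and $\boxplus$), whose membership in $\Span_{k}\{\Li_{\fn}(\bone)\mid\fn\in\INDw\}$ is not covered by any induction hypothesis; the only tool for them is again $\sU^{\Li}$, which pushes them into $\ITw$ --- exactly the values whose membership is in question. The termination analysis behind Theorem~\ref{theorem-algo}~(2) certifies that the forward iteration lands in $\IT$, but it says nothing about invertibility of the resulting dependency.

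The paper closes this gap by a determinant argument that your proposal omits and that cannot be replaced by a depth/expansion monovariant. One defines $U=(u_{\fs,\fn})_{\fs\in\INDw,\,\fn\in\ITw}$ with entries in $\Fp[L_{1}]$ by $(\sU^{\Li})^{e}(\fs)=\sum_{\fn\in\ITw}u_{\fs,\fn}[\fn]$, and it suffices to prove $\det U\neq 0$. The essential observation is an $L_{1}$-divisibility, not a complexity decrease: since $D^{\Li}=0$, every term of $\sU^{\Li}(\fs)$ except the leading one $-[\fs^{\rT},q^{\{m+1\}},\fs'']$ carries a factor $L_{1}$, so modulo $L_{1}$ the iterate sends $(m_{1}q+n_{1},\ldots,m_{r}q+n_{r})\in\INDw$ to $(-1)^{m_{1}+\cdots+m_{r}}[q^{\{m_{1}\}},n_{1},\ldots,q^{\{m_{r}\}},n_{r}]$, i.e.\ $U\bmod L_{1}$ is a signed permutation matrix realizing precisely the bijection of Proposition~\ref{Pop:|ITw|=|INDw|}; hence $\det(U\bmod L_{1})=\pm 1$, $\det U\neq 0$, and the system can be inverted over $k$, putting each $\zeta_{A}(\fs^{\rT})=\sLL(\fs^{\rT})$, $\fs^{\rT}\in\ITw$, in the ND-span. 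If you add this step (tracking the $L_{1}$-factor through the iteration and the resulting unit determinant), your argument becomes the paper's proof.
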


\begin{proof}
We set $d_{w}' := |\ITw| = |\INDw|$.
By the definition of $\sU^{\Li}$ and Theorem \ref{theorem-algo}~(2), there exists $e \gg 0$ such that $(\sU^{\Li})^{e}(\cH_{w / \Fp[L_{1}]}) \subset \cH^{\rT}_{w / \Fp[L_{1}]}$, where $\cH_{w / \Fp[L_{1}]}$ (resp.\ $\cH^{\rT}_{w / \Fp[L_{1}]}$) is the $\Fp[L_{1}]$-module spanned by $\fs \in \cI_{w}$ (resp.\ $\fs \in \IT_{w}$) in $\cH_{w}$. We note that according to Remark~\ref{Rem:Us=s}, $(\sU^{\Li})^{e}$ is independent of $e$ for $e\gg 0$.

To show the desired result, it suffices to prove that the determinant of the matrix \[U = (u_{\fs, \fn})_{\fs \in \INDw, \fn \in \ITw} \in \Mat_{d_{w}}(\Fp[L_{1}])\] arising from
\begin{align*}
(\sU^{\Li})^{e}(\fs) = \sum_{\fn \in \ITw} u_{\fs, \fn} [\fn] \ \ (\fs \in \INDw)
\end{align*}
is non-zero. We mention that $\INDw$ is equal to $\ITw$ when $w \leq q$, and in this case the result is valid. So, in what follows, we assume that $\INDw \neq \ITw$.

Taking any $\fs = (s_{1}, \ldots, s_{r}) \in \INDw$ but $\fs \notin \ITw$, based on~~\eqref{E:s^T} and the definition of $\INDw$ we  write $\fs = (\fs^{\rT}, q^{\{0\}}, \fs')$  with $\fs^{\rT} \in \IT$ ($q^{\left\{ 0\right\}}:=\emptyset$), and $\fs' \neq \emptyset$. Note that $\fs'$ is of the form: $\fs' = (s'_{1}, \ldots)$ with $s'_{1} > q$.
By definition~\eqref{E:DLi}, $D^{\Li}_{q, \fs''} = 0$.
Thus
\begin{align}\label{E:sULis}
\sU^{\Li}(\fs) = - [\fs^{\rT}, q, \fs''] + L_{1} [\fs^{\rT}, \alpha_{q}^{\Li}(\fs'')] + L_{1} (\fs^{\rT} \boxplus \alpha_{q}^{\Li}(\fs'')),
\end{align}
where $\fs'' := (s'_{1} - q, \fs'_{-})$.
Thus for each $\fs = (m_{1} q + n_{1}, \ldots, m_{r} q + n_{r}) \in \INDw$ with  $m_{i} \geq 0$ ($m_{i}$ are not all zero since $\fs\notin \ITw$) and $1 \leq n_{i} \leq q - 1$, we have
\begin{align*}
(\sU^{\Li})^{e}(\fs) &= (\sU^{\Li})^{e + m_{1} + \cdots + m_{r}}(\fs) \\
&  \in (\sU^{\Li})^{e}\left((- 1)^{m_{1} + \cdots + m_{r}} [q^{\{ m_{1} \}}, n_{1}, \ldots, q^{\{ m_{r} \}}, n_{r}] + L_{1} \cdot \cH_{w / \Fp[L_{1}]}\right)  \\
&\subset (- 1)^{m_{1} + \cdots + m_{r}} [q^{\{ m_{1} \}}, n_{1}, \ldots, q^{\{ m_{r} \}}, n_{r}] + L_{1} \cdot \cH^{\rT}_{w / \Fp[L_{1}]}.
\end{align*} Note that the first identity above comes from  the fact that $(\sU^{\Li})^{e}(\fs)\in \cH^{\rT}_{w / \Fp[L_{1}]}$, which is fixed by $(\sU^{\Li})^{m_{1} + \cdots + m_{r}}$ by Remark~\ref{Rem:Us=s}. The second belonging follows from~\eqref{E:sULis},
whence we have the last inclusion because of $(q^{\{ m_{1} \}}, n_{1}, \ldots, q^{\{ m_{r} \}}, n_{r}) \in \ITw$ and $ (\sU^{\Li})^{e} (\cH_{w / \Fp[L_{1}]}) \subset \cH^{\rT}_{w / \Fp[L_{1}]}$.
Since \[(m_{1} q + n_{1}, \ldots, m_{r} q + n_{r}) \mapsto (q^{\{ m_{1} \}}, n_{1}, \ldots, q^{\{ m_{r} \}}, n_{r})\] gives a bijection between $\INDw$ and $\ITw$, we have $\det(U \bmod L_{1}) = \pm 1$ in $\Fp$.
In particular, $\det(U) \neq 0$.
\end{proof}

\section{Frobenius difference equations}\label{Sec:FDE}
In this section, we focus on a specific system~\eqref{eq-Frob} of Frobenius difference equations arising from our study of MZV's, and the major result is Theorem~\ref{theorem-dimension} giving the precise dimension of the solution space of\eqref{eq-Frob}.

\subsection{ABP-criterion}
Let $t$ be a new variable, and $\laurent{\CC_{\infty}}{t}$ be the field of Laurent series in $t$ over $\CC_{\infty}$. For any integer $n$, we define the following {\it{$n$-fold Frobenious twisting}}:
\[\tau^{n}:= \left(f\mapsto f^{(n)} \right): \laurent{\CC_{\infty}}{t} \rightarrow \laurent{\CC_{\infty}}{t},  \]
where $f^{(n)}:=\sum a_{i}^{q^{n}}t^{i}$ for $f=\sum a_{i}t^{i}\in \laurent{\CC_{\infty}}{t}$. We further extend $\tau^{n}$ to the action on  $\Mat_{r\times s}(\laurent{\CC_{\infty}}{t})$ by entrywise action.

Note that as an automorphism of the field $\laurent{\CC_{\infty}}{t}$, $\tau^{n}$ stabilizes several subrings such as the power series ring $\power{\CC_{\infty}}{t}$, the Tate algebra 
\[\TT:=\left\{f\in \power{\CC_{\infty}}{t}| f\hbox{ converges on }|t|_{\infty}\leq 1 \right\},\] and the polynomial rings $\CC_{\infty}[t], \ok[t]$ as well as the subfields $\CC_{\infty}(t)$ and $\ok(t)$.  When $n=-1$, we denote by $\sigma := \tau^{-1}$. For $n\in \left\{1,-1 \right\}$, the following fixed elements are particularly used in this paper:
\[ \TT^{\tau}=\ok[t]^{\tau}=\FF_{q}[t]=\TT^{\sigma}=\ok[t]^{\sigma} \hbox{ and }\ok(t)^{\tau}=\FF_{q}(t)=\ok(t)^{\sigma}. \]

Following~\cite{ABP04}, we denote by $\cE\subset  \power{\CC_{\infty}}{t}$ the subring consisting of power series $f=\sum_{i=0}^{\infty}a_{i}t^{i}$ with algebraic coefficients ($a_{i}\in\ok$, $\forall i\in \ZZ_{\geq 0}$) for which 
\[\lim_{i\rightarrow \infty} \sqrt[i]{|a_{i}|_{\infty}}=0\hbox{ and }[k_{\infty}\left(a_{0},a_{1},\ldots \right):k_{\infty} ]<\infty .\] It follows that such any power series in $\cE$ converges on whole $\CC_{\infty}$, and we have \[ \cE^{\tau}=\FF_{q}[t]=\cE^{\sigma}.\] The primary tool that we use in this paper to show linear independence results is the following  ABP-criterion

\begin{theorem}[{\cite[Theorem~3.1.1]{ABP04}}] \label{T:ABP}
Let $\Phi\in \Mat_{\ell}(\ok[t])$ be a matrix satisfying that as a polynomial in $\ok[t]$, $\det \Phi$ vanishes only (if at all) at $t=\theta$. Suppose that a column vector $\psi=\psi(t)\in \Mat_{\ell \times 1}(\cE)$ satisfies the following difference equation
\[\psi^{(-1)}=\Phi \psi  .\]
We denote by $\psi(\theta)\in \Mat_{\ell \times 1}(\CC_{\infty})$ the evaluation of $\psi$ at $t=\theta$. Then for any row vector $\rho\in \Mat_{1\times \ell}(\ok)$ for which \[\rho \psi(\theta)=0,\]  there exists a row vector $\matheur{P}\in \Mat_{1\times \ell}(\ok[t])$ such that 
\[ \matheur{P}\psi=0,\hbox{ and }\matheur{P}(\theta)=\rho .\]
\end{theorem}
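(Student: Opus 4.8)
The statement to prove is the ABP-criterion (Theorem~\ref{T:ABP}), quoted from \cite[Theorem~3.1.1]{ABP04}. Since it is quoted verbatim from the literature, the expected ``proof'' in this paper is simply a citation rather than a reconstruction. Nonetheless, here is how I would organize a genuine proof sketch, following the strategy of Anderson--Brownawell--Papanikolas.

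\medskip

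\textbf{Overall approach.} The plan is to convert the vanishing linear relation $\rho\,\psi(\theta)=0$ at the single point $t=\theta$ into a polynomial identity $\matheur{P}\psi=0$ holding identically in $t$, by an interpolation/function-theoretic argument. The key mechanism is that the Frobenius equation $\psi^{(-1)}=\Phi\psi$ propagates information: if a vanishing holds at $t=\theta$, then applying twists relates it to vanishings at $t=\theta^{q},\theta^{q^2},\dots$, and together with the entireness (growth) properties of the entries of $\psi$ coming from membership in $\cE$, one gets enough constraints to pin down a polynomial annihilator.

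\medskip

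\textbf{Key steps, in order.} First I would set up the space of ``candidate annihilators'': consider row vectors $f = f(t) \in \Mat_{1\times\ell}(\ok[t])$ of bounded degree and look at the $\ok$-linear (indeed $\okinf$-analytic) functional $f \mapsto (f\psi)(\theta)$ and, more importantly, the full function $f\psi \in \Mat_{1\times 1}(\cE)$. Second, I would exploit the twisting relation: because $\psi^{(-1)} = \Phi\psi$ and $\det\Phi$ vanishes only at $t=\theta$, one controls how $f\psi$ transforms under $\sigma$; iterating, a vanishing of $f\psi$ at $t=\theta$ forces, via the functional equation, control of $f\psi$ at the Frobenius orbit of $\theta$. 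Third -- the analytic heart -- one uses a Siegel-type / liaison argument: the entries of $\psi$ lie in $\cE$, hence are entire with the prescribed growth $\lim_i\sqrt[i]{|a_i|_\infty}=0$ and bounded-degree coefficient fields, so a nonzero function of the form $f\psi$ cannot vanish on too large a set; balancing the number of linear conditions (imposed by vanishing on the Frobenius orbit up to a large height) against the number of free coefficients of $f$ (degrees of freedom growing with the allowed degree bound), one produces, for a sufficiently large degree bound $N$, a nonzero $\matheur{P}$ with $\matheur{P}\psi \equiv 0$. Fourth, once one has \emph{some} nonzero polynomial relation $\matheur{P}\psi=0$, one does a descent/normalization step to arrange that $\matheur{P}(\theta)=\rho$ exactly: use the functional equation to divide out factors of $(t-\theta)$ and its twists (this is where $\det\Phi$ vanishing only at $t=\theta$ is essential, so that dividing stays within $\ok[t]$), and combine with the original relation $\rho\psi(\theta)=0$ to match the specialization at $\theta$; a small linear-algebra adjustment over $\FF_q(t)$ (using $\cE^\sigma=\FF_q[t]$) finishes the matching.

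\medskip

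\textbf{Main obstacle.} The crux -- and the genuinely hard part -- is the third step: the quantitative function-theoretic estimate that turns ``vanishing at $\theta$'' into ``vanishing identically.'' This requires the full force of the growth condition defining $\cE$ (the coefficients decay super-geometrically and generate a bounded-degree extension of $k_\infty$), an effective bound on heights/valuations of the twisted quantities $\psi(\theta^{q^j})$, and a careful counting argument in the spirit of Siegel's lemma adapted to the Frobenius-twist setting over function fields. The descent in the fourth step is comparatively routine once the polynomial relation exists, but it does crucially rely on the hypothesis that $\det\Phi$ has no zero other than $t=\theta$, which guarantees that the relevant divisions keep the annihilator polynomial with coefficients in $\ok[t]$. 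Since all of this is carried out in detail in \cite{ABP04}, in the present paper we simply invoke \cite[Theorem~3.1.1]{ABP04}.
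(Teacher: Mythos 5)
The paper offers no proof of this statement---it is quoted directly from \cite[Thm.~3.1.1]{ABP04} as an external tool---and your proposal correctly identifies that a citation is all that is expected here. Your accompanying sketch of the Anderson--Brownawell--Papanikolas argument (an auxiliary-function construction via a Siegel-type count, propagation of vanishing along the Frobenius orbit using $\psi^{(-1)}=\Phi\psi$ together with the hypothesis that $\det\Phi$ vanishes only at $t=\theta$, a growth/zero-counting estimate for entries of $\cE$, and a final descent to arrange $\matheur{P}(\theta)=\rho$) is a faithful summary of the strategy of the original reference.
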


The spirit of ABP-criterion asserts any $\ok$-linear relation among the entries of $\psi(\theta)$ can be lifted to a $\ok[t]$-linear relation among the entries of $\psi$. We mention that the condition on $\Phi$ in the theorem above arises from {\it{dual $t$-motives}}, and refer the reader to~\cite{ABP04, P08} and~\cite{A86} also.

\subsection{Some lemmas}

For each $\fs = (s_{1}, \ldots, s_{r}) \in \cI$ and $M \subset \cI$, we set
\begin{align*}
\sI(\fs) := \{ \emptyset, (s_{1}), (s_{1}, s_{2}), \ldots, (s_{1}, \ldots, s_{r}) \}
\ \ \textrm{and} \ \
\sI(M) := \bigcup_{\fs \in M} \sI(\fs).
\end{align*}
Throughout this paper, we denote by $\eR$ the following $\FF_{q}(t)$-algebra
\begin{align*}
\eR := \{ f / g \ | \ f \in \ok[t], \ g \in \Fq[t] \setminus \{ 0 \} \} \subset \ok(t).
\end{align*}
For any nonempty set $J$, it is understood that $\eR^{J}$ refers to the $\FF_{q}(t)$-vector space  $\bigoplus_{x\in J} \eR$ and any element of $\eR^{J}$ is written in the form $(r_x)_{x\in J}$ with $r_{x}\in \eR$.

Given any nonempty subset $M \subset \cI_{w}$ for some $w \geq 0$, we consider the system of Frobenius equations
\begin{align} \label{eq-Frob}
\varepsilon_{\fs}^{(1)} = \varepsilon_{\fs} (t - \theta)^{w - \wt(\fs)} + \sum_{\substack{s' > 0 \\ (\fs, s') \in \sI(M)}} \varepsilon_{(\fs, s')} (t - \theta)^{w - \wt(\fs)} \ \ (\fs \in \sI(M)) \tag{E$_{M}$}
\end{align}
with  $(\varepsilon_{\fs})_{\fs \in \sI(M)} \in \eR^{\sI(M)}$. 
We mention that~\eqref{eq-Frob} is basically the case of $Q_i=1$ from~\cite[(3.1.3)]{CPY19}, which is derived from the study of establishing a criterion for the {\it{zeta-like}} MZV's.
Note that our $\varepsilon_{\fs}$ is essentially referring  to $\delta_i^{(-1)}$ used in~\cite[(3.1.3)]{CPY19} (with $Q_{i}=1$  in~\cite{CPY19}), but we follow~\cite{ND21} to express  \eqref{eq-Frob} with subscripts parameterized by indices. See also~\cite[(6.3), (6.4)]{ND21} in the depth two case.

\begin{remark}
Note that if $w=0$, we have $M=\cI_{0}=\left\{\emptyset\right\}$ and so $\sI(M)=\left\{ \emptyset\right\}$. In this special case, the equation~\eqref{eq-Frob} becomes $\varepsilon_{\emptyset}^{(1)}=\varepsilon_{\emptyset}$.
\end{remark}

\begin{lemma} \label{lem-in-A}
Let $F = \sum_{i = 0}^{n} f_{i} t^{i}, G \in A[t]$ be polynomials over $A$ such that $n = \deg_{t} F \geq 1$ and $f_{n} \in \Fq^{\times}$.
If $\varepsilon \in \eR$ satisfies $\varepsilon^{(1)} = \varepsilon F + G$, then we have $\varepsilon \in \Fq(t)[\theta]$.
\end{lemma}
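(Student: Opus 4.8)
The plan is to analyze the Frobenius equation $\varepsilon^{(1)} = \varepsilon F + G$ degree by degree and show first that $\varepsilon$ must actually be a polynomial (i.e.\ lie in $\Fq(t)[\theta]$ rather than merely in $\eR$), and then that no denominator in $t$ is needed. Write $\varepsilon = P/Q$ in lowest terms with $P \in \ok[t]$ and $Q \in \Fq[t] \setminus \{0\}$ monic; the equation becomes $P^{(1)} Q = P Q^{(1)} F + G Q Q^{(1)}$, but since $Q \in \Fq[t]$ we have $Q^{(1)} = Q^q$ (as a $\tau$-twist fixes $\Fq[t]$ up to the $q$-power substitution on coefficients only — more precisely $Q^{(1)}=Q$ because the coefficients lie in $\Fq$; I will use whichever normalization the paper's twisting convention dictates, the point being $Q^{(1)}$ is again a polynomial in $\Fq[t]$ of the same degree). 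Clearing denominators and comparing, one sees that any irreducible factor of $Q$ in $\Fq[t]$ would have to divide $G$ and interact with $F$; tracking the order of vanishing at such a prime and using that $\tau$ raises $\theta$-coefficients to the $q$-th power forces the denominator to be trivial, so $\varepsilon \in \ok[t]$, and then a further descent argument on the $\ok$-coefficients (they satisfy their own Frobenius-type relations forcing them into $\Fq$) gives $\varepsilon \in \Fq(t)[\theta]$. Alternatively, and perhaps more cleanly, I would argue directly on $\varepsilon$ as an element of $\eR \subset \ok(t)$ and bound its pole divisor.

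Here is the cleaner route I would actually write up. First I would show $\varepsilon \in \ok[t]$. Suppose not; let $\mathfrak{p} \in \Fq[t]$ be a monic irreducible dividing the denominator of $\varepsilon$, and let $e = -\ord_{\mathfrak{p}}(\varepsilon) > 0$ be the pole order. Applying $\tau$ (which fixes $\mathfrak{p}$ since $\mathfrak{p} \in \Fq[t]$, hence $\ord_{\mathfrak{p}}(\varepsilon^{(1)}) = \ord_{\mathfrak{p}}(\varepsilon)$ because the twist only affects coefficients in $\ok$, not the $t$-adic valuation at a prime of $\Fq[t]$), the left side $\varepsilon^{(1)}$ has pole order exactly $e$ at $\mathfrak{p}$. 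On the right side, $\varepsilon F$ has pole order $e$ (since $F \in A[t]$ has no pole) and $G$ has no pole, so $\varepsilon F + G$ has pole order at most $e$ with leading pole-part coming from $\varepsilon F$; this is consistent and gives no contradiction yet, so I need the leading coefficient $f_n \in \Fq^\times$ and the degree condition. The trick is to look instead at the behavior at $t = \infty$ (the degree in $t$): write $\deg_t \varepsilon = a$ (allowing negative $a$ if $\varepsilon$ vanishes at infinity to some order, but really I mean the $t$-adic order at infinity). Comparing $t$-degrees: $\deg_t \varepsilon^{(1)} = \deg_t \varepsilon$ while $\deg_t(\varepsilon F) = \deg_t \varepsilon + n$ with $n \geq 1$, and $\deg_t G$ is fixed. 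If $\varepsilon \notin A[t]$-like behavior at infinity forced $\deg_t \varepsilon \geq $ something, the term $\varepsilon F$ would dominate and force $\deg_t \varepsilon + n = \deg_t \varepsilon$, impossible unless cancellation with $G$ occurs, which can only happen for boundedly many degrees. This pins down $\deg_t \varepsilon$ and then, combined with the pole analysis at finite primes of $\Fq[t]$, forces the denominator to be trivial.

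The main obstacle — and the step I expect to require real care — is controlling the $\ok$-coefficients: even once we know $\varepsilon \in \ok[t]$ with bounded $t$-degree, we must show each coefficient lies in $\Fq(t)[\theta] \cap \ok = $ (roughly) $\Fq[\theta]$, equivalently that no genuinely algebraic-over-$k$ irrationalities appear. Here the relation $\varepsilon^{(1)} = \varepsilon F + G$ with $F, G \in A[t] = \Fq[\theta][t]$ is the key: writing $\varepsilon = \sum_j c_j(\theta) t^j$ with $c_j \in \ok$, the twist $\varepsilon^{(1)} = \sum_j c_j(\theta)^{(1)} t^j$ where $c_j^{(1)}$ means raising $c_j$'s coefficients-over-$\Fq$ (when $c_j$ is written via a $k_\infty$-basis) to the $q$-th power — but since $c_j \in \ok$, $c_j^{(1)} = \tau(c_j)$ and the relation becomes a Frobenius-linear recursion among the $c_j$ over $A$. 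An element of $\ok$ satisfying such an $A$-linear Frobenius equation with the specified $F$ of positive $t$-degree and unit leading coefficient must lie in $\Fq[\theta]$ by a standard valuation/growth argument (this is essentially the content of the $v$-adic and $\infty$-adic estimates underlying the ABP circle of ideas, and is where the hypotheses $n \geq 1$, $f_n \in \Fq^\times$ are genuinely used). I would isolate this as the crux and prove it by bounding $|c_j|_\infty$ via the recursion and invoking that the only elements of $\ok$ fixed by a suitable power of $\tau$ lie in $\Fq[\theta]$ once integrality is established. Assembling these pieces yields $\varepsilon \in \Fq(t)[\theta]$, completing the proof.
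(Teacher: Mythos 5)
There is a genuine gap, and in fact one of your intermediate goals is false. You spend most of the argument trying to show that the denominator of $\varepsilon$ must be trivial, i.e.\ that $\varepsilon \in \ok[t]$. That is not what the lemma asserts (the target $\Fq(t)[\theta]$ is exactly the set of quotients $P/Q$ with $P \in A[t]$, $Q \in \Fq[t]\setminus\{0\}$, so denominators in $\Fq[t]$ are allowed), and it is not true: take $F = t + \theta^{q-1}$ (so $n=1$, $f_1 = 1$), $G = -\theta$, and $\varepsilon = \theta/t$; then $\varepsilon^{(1)} = \theta^{q}/t = \varepsilon F + G$, yet $\varepsilon \notin \ok[t]$. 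Your own pole computation at a prime of $\Fq[t]$ already showed no contradiction arises, and the degree-at-infinity discussion cannot rescue it. The correct (and only needed) move here is the normalization the paper makes: since the denominator $Q$ lies in $\Fq[t]$ it is fixed by $\tau$, so multiplying the equation by $Q$ replaces $G$ by $QG \in A[t]$ and reduces one, without loss of generality, to $\varepsilon \in \ok[t]$; the denominator is simply reinstated at the end, landing in $\Fq(t)[\theta]$.

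The second, and more serious, issue is that the actual crux --- showing that the coefficients $a_i \in \ok$ of the (normalized) polynomial $\varepsilon = \sum_{i=0}^{m} a_i t^i$ lie in $A$ --- is not proved but deferred to ``a standard valuation/growth argument'' and to ``elements of $\ok$ fixed by a suitable power of $\tau$ lie in $\Fq[\theta]$.'' The coefficients are elements of $A$, not of a finite field, so they are not fixed by any power of $\tau$, and no $\infty$-adic growth estimate is needed or directly relevant. The paper's proof is an elementary top-down induction on the coefficients of $t$: since $\deg_t \varepsilon^{(1)} = m < m+n = \deg_t(\varepsilon F)$, one first gets $\deg_t G = m+n$, and comparing coefficients of $t^{i}$ for $m+1 \leq i \leq m+n$ gives $a_{i-n} f_n = -g_i - \sum_{j > i-n} a_j f_{i-j}$, which (because $f_n \in \Fq^{\times}$ is a unit of $A$) shows $a_m, \ldots, a_{m+1-n} \in A$; for $n \leq i \leq m$ the same comparison with the extra term $a_i^{q}$ (already known to lie in $A$) pushes the induction down to $a_0$. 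Your sketch sets up the right recursion among the $c_j$ but never identifies the starting point of the induction (the vanishing of the coefficients of $\varepsilon^{(1)}$ in degrees $m+1,\dots,m+n$) nor the fact that invertibility of $f_n$ lets you solve for each coefficient inside $A$ directly; as written, the crux of the lemma is left unproven.
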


\begin{proof}
    Our method is inspired by H.-J. Chen's formulation in the proof of \cite[Thm.~2 (a)]{KL16} (see Step I of the proof of \cite[Thm.~6.1.1]{C16} also).
By multiplying the denominator of $\varepsilon$ on the equation, without loss of generality
 we assume that $\varepsilon \in \ok[t] \setminus \{ 0 \}$.
We put $m := \deg_{t} \varepsilon \geq 0$.
Since $\deg_{t} \varepsilon^{(1)} = \deg_{t} \varepsilon < \deg_{t} (\varepsilon F)$, we have $\deg_{t} G = m + n$.
We set
\begin{align*}
\varepsilon = \sum_{i = 0}^{m} a_{i} t^{i} \ (a_{i} \in \ok) \ \ \textrm{and} \ \ G = \sum_{i = 0}^{m + n} g_{i} t^{i} \ (g_{i} \in A).
\end{align*}
Moreover, we set $a_{i} := 0$ (resp.\ $f_{i} := 0$) when $i$ does not satisfy $0 \leq i \leq m$ (resp.\ $0 \leq i \leq n$).
Then we have
\begin{align*}
\sum_{i = 0}^{m} a_{i}^{q} t^{i} = \sum_{j = 0}^{m} a_{j} t^{j} \sum_{\ell = 0}^{n} f_{\ell} t^{\ell} + \sum_{i = 0}^{m + n} g_{i} t^{i}
= \sum_{i = 0}^{m + n} \sum_{j + \ell = i} a_{j} f_{\ell} t^{i} + \sum_{i = 0}^{m + n} g_{i} t^{i}.
\end{align*}
By comparing the coefficients of $t^{i}$ for $m + 1 \leq i \leq m + n$, we have
\begin{align*}
a_{i - n} f_{n} = - g_{i} - \sum_{j > i - n} a_{j} f_{i - j}.
\end{align*}
Then we can show that $a_{m}, a_{m - 1}, \ldots, a_{m + 1 - n} \in A$ inductively.

Suppose that $m + 1 - n \geq 1$ $(\Leftrightarrow m \geq n)$.
Then by comparing the coefficients of $t^{i}$ for $n \leq i \leq m$, we have
\begin{align*}
a_{i - n} f_{n} = a_{i}^{q} - g_{i} - \sum_{j > i - n} a_{j} f_{i - j}.
\end{align*}
Then we can show that $a_{m - n}, a_{m - n - 1}, \ldots, a_{0} \in A$ inductively.
Therefore, in any case we have $\varepsilon \in A[t]$.
\end{proof}

The next lemma can be viewed as an improvement of Step II \cite[Thm.~6.1.1]{C16} and \cite[Thm.~2 (b)]{KL16} which gives a better upper bound for the degree of solutions $(\varepsilon_{\fs})_{\fs \in \sI(M)} \in \eR^{\sI(M)}$ satisfying \eqref{eq-Frob}.

\begin{lemma} \label{lem-degree}
Let $w \geq 0$ and $\emptyset\neq M \subset \cI_{w}$.
If $(\varepsilon_{\fs})_{\fs \in \sI(M)} \in \eR^{\sI(M)}$ satisfies \eqref{eq-Frob}
\begin{align*}
\varepsilon_{\fs}^{(1)} = \varepsilon_{\fs} (t - \theta)^{w - \wt(\fs)} + \sum_{\substack{s' > 0 \\ (\fs, s') \in \sI(M)}} \varepsilon_{(\fs, s')} (t - \theta)^{w - \wt(\fs)} \ \ (\fs \in \sI(M)),
\end{align*}
then we have $\varepsilon_{\fs} \in \Fq(t)[\theta]$ and $\deg_{\theta} \varepsilon_{\fs} \leq \dfrac{w - \wt(\fs)}{q - 1}$ for each $\fs \in \sI(M)$.
\end{lemma}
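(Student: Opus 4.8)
The plan is to induct on the depth of indices, working from the longest indices in $\sI(M)$ down to the empty index, and at each stage reduce to the single-variable situation handled by Lemma~\ref{lem-in-A}. First I would note that $\sI(M)$ is closed under taking truncations $\fs \mapsto \fs_{+}$ except for $\emptyset$, and that for an index $\fs \in \sI(M)$ of maximal depth among elements of $\sI(M)$ (equivalently, $\fs$ is a "leaf": there is no $s' > 0$ with $(\fs, s') \in \sI(M)$), the equation \eqref{eq-Frob} for that $\fs$ degenerates to $\varepsilon_{\fs}^{(1)} = \varepsilon_{\fs}(t-\theta)^{w - \wt(\fs)}$ with no inhomogeneous term. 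Writing $F = (t-\theta)^{w-\wt(\fs)} \in A[t]$, which has $t$-degree $w - \wt(\fs) \geq 0$ and leading coefficient $1 \in \Fq^{\times}$ (the case $\wt(\fs) = w$ giving $F = 1$ must be treated separately, forcing $\varepsilon_{\fs}^{(1)} = \varepsilon_{\fs}$, hence $\varepsilon_{\fs} \in \Fq(t)$), Lemma~\ref{lem-in-A} applies with $G = 0$ and gives $\varepsilon_{\fs} \in \Fq(t)[\theta]$.

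Next I would establish the degree bound for such a leaf $\fs$. Clearing denominators in $t$ we may assume $\varepsilon_{\fs} \in \Fq[t][\theta]$; write $\varepsilon_{\fs} = \sum_{j=0}^{m} c_j(t)\, \theta^{j}$ with $c_j(t) \in \Fq[t]$ and $c_m \neq 0$, where $m = \deg_{\theta}\varepsilon_{\fs}$. Comparing top-degree-in-$\theta$ terms in $\varepsilon_{\fs}^{(1)} = \varepsilon_{\fs}(t-\theta)^{n}$ with $n = w - \wt(\fs)$: the left side has $\theta$-degree $qm$, while the right side has $\theta$-degree $m + n$. Hence $qm = m + n$, i.e. $m = n/(q-1) = (w-\wt(\fs))/(q-1)$, which is even sharper than (and certainly implies) the claimed bound $\deg_{\theta}\varepsilon_{\fs} \leq (w-\wt(\fs))/(q-1)$.

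Then I would run the downward induction on the depth. Suppose the conclusion holds for all $\fn \in \sI(M)$ with $\dep(\fn) > \dep(\fs)$; in particular every $\varepsilon_{(\fs, s')}$ appearing in the equation for $\fs$ lies in $\Fq(t)[\theta]$ with $\deg_{\theta}\varepsilon_{(\fs,s')} \leq (w - \wt(\fs) - s')/(q-1) \leq (w - \wt(\fs) - 1)/(q-1)$. Now the equation for $\fs$ reads $\varepsilon_{\fs}^{(1)} = \varepsilon_{\fs}\,(t-\theta)^{n} + G$ where $G := \big(\sum_{s'} \varepsilon_{(\fs,s')}\big)(t-\theta)^{n} \in \Fq(t)[\theta]$ and $n = w - \wt(\fs)$; after clearing $t$-denominators, Lemma~\ref{lem-in-A} again gives $\varepsilon_{\fs} \in \Fq(t)[\theta]$ (the case $n = 0$, i.e. $\wt(\fs)=w$, being again separate: then necessarily $M = \{\fs\}$ has only this single weight-$w$ index and $\sI(M)$ has no deeper elements, so $G=0$ and $\varepsilon_{\fs}\in\Fq(t)$). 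For the degree bound, write again $\varepsilon_{\fs} = \sum_{j=0}^m c_j \theta^j$ with $c_m \neq 0$; the $\theta$-degree of the left side is $qm$, the $\theta$-degree of $\varepsilon_{\fs}(t-\theta)^n$ is $m+n$, and $\deg_{\theta} G \leq \frac{w-\wt(\fs)-1}{q-1} + n$. If $m > n/(q-1)$ then $qm > m + n \geq \deg_\theta G$, so the top term $c_m^q \theta^{qm}$ on the left has no match on the right, a contradiction; hence $m \leq n/(q-1) = (w-\wt(\fs))/(q-1)$.

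The main obstacle I anticipate is purely bookkeeping: making the induction well-founded (the poset $\sI(M)$ ordered by the prefix relation is finite, so "induct downward on depth" is legitimate, but one must phrase it so that the leaf case and the general inductive step are uniformly covered, and so that the degenerate subcase $\wt(\fs) = w$ — where the twisting operator has trivial $t$-part — is not overlooked). There is no deep difficulty beyond Lemma~\ref{lem-in-A} and a leading-coefficient comparison; the content is entirely in organizing the reduction so that every inhomogeneous term has already been controlled by the inductive hypothesis before one invokes Lemma~\ref{lem-in-A}.
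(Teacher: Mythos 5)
Your proof is correct and follows essentially the same route as the paper's: the base case $\fs\in M$ (equivalently $\wt(\fs)=w$, which is exactly when no extension $(\fs,s')$ lies in $\sI(M)$) gives $\varepsilon_{\fs}^{(1)}=\varepsilon_{\fs}$ and hence $\varepsilon_{\fs}\in\FF_q(t)$; an induction (yours downward on depth, the paper's on $w-\wt(\fs)$, which amounts to the same thing) combined with Lemma~\ref{lem-in-A} after clearing $t$-denominators gives membership in $\FF_q(t)[\theta]$; and the comparison of the leading $\theta$-degrees $qm$, $m+n$ and $\deg_{\theta}G$ yields the bound, exactly as in the paper. The only blemishes are cosmetic and harmless: ``maximal depth'' is not equivalent to being a leaf of $\sI(M)$, and in the degenerate subcase $\wt(\fs)=w$ it need not be true that $M=\{\fs\}$ --- but the only fact used there is $G=0$, which does hold since any $(\fs,s')$ would have weight exceeding $w$.
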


\begin{proof}
When $\fs \in M$, then we have $\varepsilon_{\fs} \in \Fq(t)$ since in this case, $\varepsilon_{\fs}^{(1)}=\varepsilon_{\fs}$.
Thus the statements are clearly valid.
By Lemma \ref{lem-in-A} and the induction on $w - \wt(\fs)$, we have $\varepsilon_{\fs} \in \Fq(t)[\theta]$ for all $\fs \in \sI(M)$.
We now consider $\fs \in \sI(M) \setminus M$ and suppose that $\deg_{\theta} \varepsilon_{(\fs, s')} \leq \dfrac{w - \wt(\fs, s)}{q - 1}$ for all $s' > 0$ with $(\fs, s') \in \sI(M)$.

Suppose on the contrary that $\deg_{\theta} \varepsilon_{\fs} > \dfrac{w - \wt(\fs)}{q - 1}$.
Then we have $\deg_{\theta} \varepsilon_{\fs}^{(1)} > \dfrac{q (w - \wt(\fs))}{q - 1}$.
On the other hand, the induction hypothesis implies
\begin{align*}
\deg_{\theta} (\varepsilon_{(\fs, s')} (t - \theta)^{w - \wt(\fs)}) \leq \dfrac{w - \wt(\fs, s')}{q - 1} + w - \wt(\fs) < \dfrac{q (w - \wt(\fs))}{q - 1}.
\end{align*}
It follows that $\deg_{\theta} \varepsilon_{\fs}^{(1)} = \deg_{\theta} (\varepsilon_{\fs} (t - \theta)^{w - \wt(\fs)})$, and hence $\deg_{\theta} \varepsilon_{\fs} = \dfrac{w - \wt(\fs)}{q - 1}$,
which is a contradiction.
\end{proof}

\subsection{Dimension of $\sX_{w}$} 
For $w\in \ZZ_{w\geq 0}$, We define $\INDzw \subset \INDw$ as follows:
 \begin{equation}\label{E:INDzw}
 \INDzw = \left\{ \begin{array}{@{}ll} \left\{\emptyset\right\} & \textrm{if} \ w=0 \\ \left\{ (s_{1},\ldots,s_{r})\in \INDw|s_{2},\ldots,s_{r} \hbox{ are divisible by }q-1  \right\} & \textrm{if} \ w>0 \end{array} \right.
  \end{equation}
Note that the description of indices in $\INDzw$ comes from the simultaneously Eulerian phenomenon in~\cite[Cor.~4.2.3]{CPY19}.
For each $w \geq 0$, we consider the system of Frobenius equations \eqref{eq-Frob} for $M = \INDzw$:
\begin{align} \label{eq-Frob-w}
\varepsilon_{\fs}^{(1)} = \varepsilon_{\fs} (t - \theta)^{w - \wt(\fs)} + \sum_{\substack{s' > 0 \\ (\fs, s') \in \sI(\INDzw)}} \varepsilon_{(\fs, s')} (t - \theta)^{w - \wt(\fs)} \ \ (\fs \in \sI(\INDzw)) \tag{E$_{w}$}
\end{align}
with $(\varepsilon_{\fs})_{\fs \in \sI(\INDzw)} \in \eR^{\sI(\INDzw)}$. We emphasize that $(E_{w}) = (E_{\INDzw})$ and $(E_{w}) \neq (E_{\{w\}})$.

\begin{example}
    We give some explicit examples for the system of Frobenius equations \eqref{eq-Frob-w}. The simplest example is the case  $w=0$. We note that $\INDz_0=\sI(\INDz_0)=\{\emptyset\}$. Thus, \eqref{eq-Frob-w} consists of a single equation
    \[
        \varepsilon_{\emptyset}^{(1)} = \varepsilon_{\emptyset}.
    \]
    
    The second example is the case $w=q$. In this case, $\INDz_{q}=\{(1,q-1)\}$ and
    \begin{align*}
        \sI(\INDz_{q})=\{\emptyset,(1),(1,q-1)\}.
    \end{align*}
    Thus, \eqref{eq-Frob-w} consists of three equations
    \begin{align*}
        \left\{ 
            \begin{array}{@{}ll} 
                \varepsilon_{(1,q-1)}^{(1)} &= \varepsilon_{(1,q-1)}\\
                \varepsilon_{(1)}^{(1)} &= \varepsilon_{(1)} (t - \theta)^{q-1}+\varepsilon_{(1,q-1)}(t-\theta)^{q-1}\\
                \varepsilon_{\emptyset}^{(1)} &= \varepsilon_{\emptyset} (t - \theta)^{q}+\varepsilon_{(1)}(t-\theta)^{q}.
            \end{array} 
        \right.
    \end{align*}
    
    The third example is the case $w=2q-2$ with $q \geq 3$. Since $\INDz_{2q-2}=\{(q-1,q-1),(2q-2)\}$, we have
    \[
        \sI(\INDz_{2q-2})=\{\emptyset,(q-1),(q-1,q-1),(2q-2)\}.
    \]
    Then \eqref{eq-Frob-w} consists of four equations
    \begin{align*}
        \left\{ 
            \begin{array}{@{}ll} 
                \varepsilon_{(2q-2)}^{(1)} &= \varepsilon_{(2q-2)}\\
                \varepsilon_{(q-1,q-1)}^{(1)} &= \varepsilon_{(q-1,q-1)}\\
                \varepsilon_{(q-1)}^{(1)} &= \varepsilon_{(q-1)} (t - \theta)^{q-1}+\varepsilon_{(q-1,q-1)}(t-\theta)^{q-1}\\
                \varepsilon_{\emptyset}^{(1)} &= \varepsilon_{\emptyset} (t - \theta)^{2q-2}+\varepsilon_{(q-1)}(t-\theta)^{2q-2}+\varepsilon_{(2q-2)}(t-\theta)^{2q-2}.
            \end{array} 
        \right.
    \end{align*}
\end{example}

\begin{definition} For each $w\in \ZZ_{\geq 0}$, we let $\sX_{w}$ be the set of $\eR$-valued solutions  of \eqref{eq-Frob-w}.
\end{definition}

\begin{remark}

Since $f^{(1)}=f$ for any $f\in \FF_{q}(t)\subset \eR$, we
see that $\sX_{w}$ forms an $\Fq(t)$-vector subspace of $\eR^{\sI(\INDzw)}$.
\end{remark}

The main result of this section is the following dimension formula for $\sX_{w}$.

\begin{theorem} \label{theorem-dimension} For any $w\in \ZZ_{\geq 0}$,
we have \[\dim_{\Fq(t)} \sX_{w} = \left\{ \begin{array}{@{}ll} 1 & \textrm{if} \ (q - 1) \mid w \\ 0 & \textrm{if} \ (q - 1) \nmid w \end{array} \right.\]
When $w = \ell (q - 1)$ for some $\ell\in \ZZ_{\geq 0}$, we can choose a generator $(\eta_{\ell; \fs})_{\fs \in \sI(\INDzw)} \in \sX_{w}$ such that
\begin{align*}
\eta_{\ell; \emptyset} = \sum_{i = 0}^{\ell} b_{\ell i} (t - \theta)^{i}, \ \
b_{\ell i} \in T \Fp[T]_{(T)} \ (0 \leq i < \ell) \ \ \textrm{and} \ \
b_{\ell \ell} = 1,
\end{align*}
where $T := t - t^{q}$ and $\Fp[T]_{(T)} \subset \Fp(t)$ is the localization of $\Fp[T]$ at the prime ideal $(T)$.
\end{theorem}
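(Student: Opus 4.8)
The plan is to proceed by induction on $w$, exploiting the triangular structure of the system \eqref{eq-Frob-w}. First I would record the reductions already available: by Lemma \ref{lem-degree}, every solution $(\varepsilon_{\fs})_{\fs \in \sI(\INDzw)} \in \sX_{w}$ automatically lies in $\Fq(t)[\theta]^{\sI(\INDzw)}$ with the degree bound $\deg_{\theta} \varepsilon_{\fs} \leq (w - \wt(\fs))/(q-1)$. In particular, for $\fs \in \INDzw$ (the ``leaves'' of $\sI(\INDzw)$), the equation is $\varepsilon_{\fs}^{(1)} = \varepsilon_{\fs}$, so $\varepsilon_{\fs} \in \Fq(t)$; and the degree bound shows $\varepsilon_{\fs}$ can be a nonzero constant only when $\wt(\fs) = w$, i.e.\ when $\fs \in \INDzw$ itself (which is consistent). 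The key structural observation is that $\sI(\INDzw) \setminus \{\emptyset\}$ decomposes according to the first entry $s_1$: writing $\fs = (s_1, \fs_-)$, the tail $\fs_-$ ranges over $\sI(\INDz_{w - s_1})$-type data, and the constraint defining $\INDzw$ forces $s_1 \not\equiv 0 \pmod q$ with $s_2, \ldots, s_r$ divisible by $q-1$. This lets me peel off the first coordinate and relate $\sX_w$ to the spaces $\sX_{w'}$ for smaller $w'$.

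The heart of the argument is the equation for $\varepsilon_{\emptyset}$, namely $\varepsilon_{\emptyset}^{(1)} = \varepsilon_{\emptyset}(t-\theta)^w + \sum_{(s') \in \INDzw \cap \sI(\INDzw)} \varepsilon_{(s')}(t-\theta)^w$, together with the chain of equations running ``down'' each branch of the tree $\sI(\INDzw)$. I would analyze this by substituting $T = t - t^q$ and tracking $\theta$-degrees. The recursion $\varepsilon_{\fs}^{(1)} = (t-\theta)^{w - \wt(\fs)}\big(\varepsilon_{\fs} + \sum \varepsilon_{(\fs,s')}\big)$ shows that, going from a leaf up to the root, each step multiplies by a power of $(t-\theta)$ and applies $\tau$; comparing leading $\theta$-coefficients and using $q \cdot \frac{w-\wt(\fs)}{q-1} = \frac{w - \wt(\fs)}{q-1} + (w - \wt(\fs)) + \frac{w-\wt(\fs)}{q-1}$... more precisely, the degree bound is achieved with equality exactly when the contribution propagates, and this forces $(q-1) \mid (w - \wt(\fs))$ at each node that carries a nonzero solution. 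Since the leaves have $\wt = w$, tracing back to $\emptyset$ forces $(q-1) \mid w$; this gives the vanishing $\dim = 0$ when $(q-1) \nmid w$. When $(q-1) \mid w$, say $w = \ell(q-1)$, the same analysis pins down $\varepsilon_{\emptyset}$ to be a polynomial in $(t-\theta)$ of degree exactly $\ell$ with a prescribed leading term, and the lower coefficients are then determined recursively by the twisting equation; solving $b^{(1)} = $ (explicit expression in lower $b$'s and $T$) shows each $b_{\ell i}$ lies in $T\Fp[T]_{(T)}$. Uniqueness up to $\Fq(t)$-scalar comes from the fact that the homogeneous part of each such equation has only constant solutions, so fixing $b_{\ell\ell} = 1$ rigidifies everything.

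I would set up the induction so that the generator $(\eta_{\ell; \fs})$ for weight $\ell(q-1)$ is built from the generator $(\eta_{\ell'; \fs})$ at smaller weight by peeling the first coordinate: if $\fs = (s_1, \fs_-) \in \sI(\INDzw)$ with $s_1$ fixed, then $(\eta_{\ell;(s_1,\fs_-)})_{\fs_-}$ should be proportional to a shifted/twisted copy of $(\eta_{\ell''; \fs_-})$ where $\ell''(q-1) = w - s_1$ — but here one must be careful, since $s_1$ itself need not be divisible by $q-1$, only $s_2, \ldots, s_r$ are. So the recursion is really on the truncated index $\fs_- = (s_2, \ldots, s_r)$, whose weight $w - s_1$ need not be a multiple of $q-1$; the divisibility $(q-1) \mid w$ is what reconciles this at the top. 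The main obstacle I anticipate is precisely this bookkeeping: showing that the degree-bound-achieving solutions organize into a one-dimensional space requires carefully matching the $\theta$-leading coefficients across all branches of $\sI(\INDzw)$ simultaneously and checking the compatibility forced by the shared root equation for $\varepsilon_{\emptyset}$ — in other words, verifying that the various branch contributions $\sum_{(s')} \varepsilon_{(s')}$ do not accidentally cancel or over-determine the system. Controlling the localization-at-$(T)$ integrality of the coefficients $b_{\ell i}$ is a secondary technical point, handled by induction once the recursive formula for $b_{\ell i}^{(1)} - b_{\ell i}$ in terms of the $\eta$'s at lower weight is written out and one checks the relevant denominators are prime to $T$.
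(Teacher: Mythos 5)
Your setup is broadly the same as the paper's: reduce via Lemma \ref{lem-degree}, observe that fixing the first entry of an index in $\INDzw$ makes each ``branch'' of $\sI(\INDzw)$ a copy of the lower-weight system (E$_{j(q-1)}$), so by induction each branch solution is a scalar $f_{j}$ times the generator $(\eta_{j;\fs})$, then expand $\varepsilon_{\emptyset}=\sum_{i}a_{i}(t-\theta)^{i}$, plug into the root equation, and compare coefficients after writing $t-\theta^{q}=(t-\theta)^{q}+T$. However, the decisive step is missing. In the paper, the comparison of coefficients is organized into a linear system $U\bigl(\begin{smallmatrix}\ba\\ \bff\end{smallmatrix}\bigr)=a_{\ell}\bb$ (resp.\ $=\mathbf{0}$ when $(q-1)\nmid w$) with $U$ over $\Fp[T]_{(T)}$, and the entire theorem --- the bound $\dim\le 1$, the vanishing when $(q-1)\nmid w$, the integrality $b_{\ell i}\in T\Fp[T]_{(T)}$ and the normalization $b_{\ell\ell}=1$ --- rests on the explicit computation $\det(U\bmod T)=\pm 1$. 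You explicitly defer exactly this point (``the main obstacle I anticipate is precisely this bookkeeping''), so the proposal stops short of a proof rather than giving one.

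Moreover, the shortcut you suggest for the case $(q-1)\nmid w$ does not work. For every nonempty prefix $\fs\in\sI(\INDzw)\setminus\{\emptyset\}$ the deficit $w-\wt(\fs)=s_{j+1}+\cdots+s_{r}$ is \emph{automatically} divisible by $q-1$ by the very definition of $\INDzw$, so ``forcing divisibility at each node and tracing back to $\emptyset$'' carries no information; the only node in question is the root, and a coarse degree/leading-coefficient comparison there does not force $(q-1)\mid w$: if $\deg_{\theta}\varepsilon_{\emptyset}=d$ the equation only gives $qd=w+e$ with $0\le e\le d$, which is solvable for any $w$. One must also rule out solutions with $\varepsilon_{\emptyset}=0$ but nonzero branch scalars $f_{j}$, which again is part of the nondegeneracy of the full coefficient system rather than a degree argument. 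Finally, a structural slip: you worry that the tail weight $w-s_{1}$ ``need not be a multiple of $q-1$,'' but it always is (each $s_{i}$ with $i\ge 2$ is divisible by $q-1$); it is $s_{1}$ that is merely congruent to $w$ modulo $q-1$. This is exactly what makes the branch-to-$\sX_{j(q-1)}$ induction legitimate, and getting it backwards suggests the mechanism of the reduction, and hence of the crucial determinant analysis, was not actually in place.
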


\begin{proof}
Note that by Lemma \ref{lem-degree}, we have $\sX_{w} \subset \Fq(t)[\theta]^{\sI(\INDzw)}$.
We write
\begin{align*}
w &= \ell (q - 1) + s \ \ (\ell \geq 0, \ \ 0 \leq s < q - 1), \\
\ell &= m q + n \ \ (m \geq 0, \ \ 0 \leq n < q).
\end{align*}

First we assume that $s = 0$ and hence $w = \ell (q - 1)$ with $\ell \geq 0$.
When $\ell = 0$, we have $\sI(\cI^{\mathrm{ND}_{0}}_{0}) = \{ \emptyset \}$ and the equation is $\varepsilon_{\emptyset}^{(1)} = \varepsilon_{\emptyset}$.
Thus $\sX_0=\Fq(t)$, and we have a generator $\eta_{0; \emptyset} = 1$.
Let $\ell \geq 1$ and suppose that the desired properties hold for weight $\ell' (q - 1)$ with $\ell' < \ell$.
The system of Frobenius equations (E$_{\ell (q - 1)}$) becomes
\begin{align} \label{eq-Fr-empty-even}
\varepsilon_{\emptyset}^{(1)} = \varepsilon_{\emptyset} (t - \theta)^{\ell (q - 1)} + \sum_{\substack{0 \leq j < \ell \\ j \not\equiv \ell \bmod q}} \varepsilon_{((\ell - j) (q - 1))}(t - \theta)^{\ell (q - 1)}
\end{align}
and
\begin{align*}
\varepsilon_{((\ell - j) (q - 1), \fs)}^{(1)} &= \varepsilon_{((\ell - j) (q - 1), \fs)} (t - \theta)^{j (q - 1) - \wt(\fs)} \\
& \ \ \ + \sum_{\substack{s' > 0 \\ (\fs, s') \in \sI(\cI^{\mathrm{ND}_{0}}_{j (q - 1)})}} \varepsilon_{((\ell - j) (q - 1), \fs, s')} (t - \theta)^{j (q - 1) - \wt(\fs)}
\end{align*}
for $0 \leq j < \ell$ with $j \not\equiv \ell \bmod q$ and $\fs \in \sI(\cI^{\mathrm{ND}_{0}}_{j (q - 1)})$.
Since $(\varepsilon_{((\ell - j) (q - 1), \fs)})_{\fs \in \sI(\cI^{\mathrm{ND}_{0}}_{j (q - 1)})} \in \sX_{j (q - 1)}$ for each $j$, the induction hypothesis implies that we have
\begin{align*}
\varepsilon_{((\ell - j) (q - 1))} = f_{j} \sum_{i = 0}^{j} b_{j i} (t - \theta)^{i}, \ \
f_{j} \in \Fq(t), \ \
b_{j i} \in T \Fp[T]_{(T)}
\ \ \textrm{and} \ \
b_{j j} = 1.
\end{align*}
By Lemma \ref{lem-degree}, we have $\deg_{\theta} \varepsilon_{\emptyset} \leq \ell$.
We write $\varepsilon_{\emptyset} = \sum_{i = 0}^{\ell} a_{i} (t - \theta)^{i}$ with $a_{i} \in \Fq(t)$, and plug it into \eqref{eq-Fr-empty-even}, then we obtain
\begin{align*}
\sum_{j = 0}^{\ell} a_{j} (t - \theta^{q})^{j} = \sum_{i = 0}^{\ell} a_{i} (t - \theta)^{\ell (q - 1) + i} + \sum_{\substack{0 \leq j < \ell \\ j \not\equiv \ell \bmod q}} f_{j} \sum_{i = 0}^{j} b_{j i} (t - \theta)^{\ell (q - 1) + i}.
\end{align*}
Note that the equation above can be expressed as
\begin{align*}
\sum_{i = 0}^{\ell - 1} \sum_{j = i}^{\ell - 1} \binom{j}{i} T^{j - i} a_{j} (t - \theta)^{i q} &- \sum_{i = 0}^{\ell - 1} a_{i} (t - \theta)^{\ell (q - 1) + i} - \sum_{i = 0}^{\ell - 1} \sum_{\substack{i \leq j < \ell \\ j \not\equiv \ell \bmod q}} b_{j i} f_{j} (t - \theta)^{\ell (q - 1) + i} \\
&= a_{\ell} \sum_{i = 0}^{\ell - 1} \binom{\ell}{i} T^{\ell - i} (t - \theta)^{i q}.
\end{align*}
Note further that
\begin{align*}
i q < w = (\ell - m) q - n \ \Longleftrightarrow \ i < \ell - m - \dfrac{n}{q}.
\end{align*}
By comparing the coefficients of $(t - \theta)^{\nu}$ for $\nu = i q$ $(0 \leq i < \ell - m)$ or $\ell (q - 1) \leq \nu < \ell q$, this gives a system of linear equations with $(2 \ell - m)$-equations and $(2 \ell - m + 1)$-variables.
We write $\ba := (a_{0}, \ldots, a_{\ell - 1})^{\tr}$ and $\bff := (f_{0}, \ldots, f_{\ell - 1})^{\tr}$ (excluding $f_{n}, f_{q + n}, \ldots, f_{(m - 1) q + n}$).
Then the system mentioned above can be expressed as
\begin{align}\label{E:matrx U}
U \left( \begin{array}{@{}c@{}} \ba \\ \bff \end{array} \right) = a_{\ell} \bb, \ \ U \in \Mat_{2 \ell - m}(\Fp[T]_{(T)}) \ \ \textrm{and} \ \ \bb \in (T \Fp[T]_{(T)})^{2 \ell - m}.
\end{align}

As our goal of this case $w=\ell (q-1)$ is to show $\dim_{\FF_{q}(t)}\sX_{w}=1$, it is to show that the existence of $\begin{pmatrix}
\ba\\
\bff
\end{pmatrix}$ is unique up to $\FF_{q}(t)$-scalar multiple.
Therefore, from~\eqref{E:matrx U} it suffices to show that $\det(U \bmod T) \neq 0$ in $\Fp$.
Indeed, we have
\begin{align*}
&(U \bmod T) + \left( \begin{array}{@{}cc@{}} O & O \\ \Id_{\ell} & O \end{array} \right) \\
&= \begin{array}{rc@{\hspace{0.3em}}c@{\hspace{0.3em}}c@{\hspace{0.3em}}c@{\hspace{0.3em}}c@{\hspace{0.3em}}c@{\hspace{0.3em}}c@{\hspace{0.3em}}|@{\hspace{0.3em}}c@{\hspace{0.3em}}c@{\hspace{0.3em}}c@{\hspace{0.3em}}c@{\hspace{0.3em}}c@{\hspace{0.3em}}c@{\hspace{0.3em}}c@{\hspace{0.3em}}c@{\hspace{0.3em}}c@{\hspace{0.3em}}c@{\hspace{0.3em}}c@{\hspace{0.3em}}c@{\hspace{0.3em}}c@{\hspace{0.3em}}c@{\hspace{0.3em}}c@{\hspace{0.3em}}c@{}ll}
& \multicolumn{7}{c}{\overbrace{\hspace{6.0em}}^{\ell}} & \multicolumn{16}{c}{\overbrace{\hspace{20.5em}}^{\ell - m}} & & \\
\ldelim( {24}{4pt}[] & 1 & & & & & & & & & & & & & & & & & & & & & & & \rdelim) {24}{2pt}[] & \rdelim\}{3}{10pt}[{\footnotesize $\ell - m$}] \\
& & \ddots & & & & & & & & & & & & & & & & & & & & & & & \\
& & & 1 & & & & & & & & & & & & & & & & & & & & & & \\
\cline{2-22}
& & & & & & & & - 1 & & \multicolumn{1}{c@{}:}{} & & & & & & & & & & & & & & & \rdelim\}{3}{10pt}[{\footnotesize $n$}] \\
& & & & & & & & & \ddots & \multicolumn{1}{c@{}:}{} & & & & & & & & & & & & & & & \\
& & & & & & & & & & \multicolumn{1}{c@{}:}{- 1} & & & & & & & & & & & & & & & \\
\cdashline{2-22}
& & & & 1 & & & & & & \multicolumn{1}{c@{}:}{} & 0 & & & & & & & & & & & & & & \\
\cdashline{2-22}
& & & & & & & & & & & \multicolumn{1}{:c@{}}{- 1} & & \multicolumn{1}{c@{}:}{} & & & & & & & & & & & & \rdelim\}{3}{10pt}[{\footnotesize $q - 1$}] \\
& & & & & & & & & & & \multicolumn{1}{:c@{}}{} & \ddots & \multicolumn{1}{c@{}:}{} & & & & & & & & & & & & \\
& & & & & & & & & & & \multicolumn{1}{:c@{}}{} & & \multicolumn{1}{c@{}:}{- 1} & & & & & & & & & & & & \\
\cdashline{2-22}
& & & & & 1 & & & & & & & & \multicolumn{1}{c@{}:}{} & 0 & & & & & & & & & & & \\
\cdashline{2-22}
& & & & & & & & & & & & & & \ddots & & & & & & & & & & & \\
& & & & & & \ddots & & & & & & & & & \ddots & & & & & & & & & & \vdots \\
& & & & & & & & & & & & & & & & \multicolumn{1}{c@{}}{\ddots} & & & & & & & & & \\
\cdashline{2-22}
& & & & & & & & & & & & & & & & \multicolumn{1}{:c@{}}{- 1} & & \multicolumn{1}{c@{}:}{} & & & & & & & \rdelim\}{3}{10pt}[{\footnotesize $q - 1$}] \\
& & & & & & & & & & & & & & & & \multicolumn{1}{:c@{}}{} & \ddots & \multicolumn{1}{c@{}:}{} & & & & & & & \\
& & & & & & & & & & & & & & & & \multicolumn{1}{:c@{}}{} & & \multicolumn{1}{c@{}:}{- 1} & & & & & & & \\
\cdashline{2-22}
& & & & & & & 1 & & & & & & & & & & & \multicolumn{1}{c@{}:}{} & 0 & & & & & & \\
\cdashline{2-22}
& & & & & & & & & & & & & & & & & & & \multicolumn{1}{:c@{}}{- 1} & & & & & & \rdelim\}{3}{10pt}[{\footnotesize $q - 1$}] \\
& & & & & & & & & & & & & & & & & & & \multicolumn{1}{:c@{}}{} & \ddots & & & & & \\
& & & & & & & & & & & & & & & & & & & \multicolumn{1}{:c@{}}{} & & - 1 & & & &
\end{array}.
\end{align*}
The matrix $\left(U \bmod T\right)+ \left( \begin{array}{@{}cc@{}} O & O \\ \Id_{\ell} & O \end{array} \right)$ can be described as follows:
\begin{itemize}
\item [(i)] We first set up the lower right submatrix  by beginning with a matrix of diagonal blocks, the first one being $-\Id_n$ and the remaining $m$ of them being equal to $-\Id_{q-1}$.  Then we insert zero rows between the blocks to obtain our submatrix of size $\ell \times (\ell -m)$.

\item [(ii)] We then put the identity matrix $\Id_{\ell-m}$ on the upper left corner. 

\item [(iii)] Finally we define the $\left((\ell-m)+1\right)$st column to the $\ell$th column, each of length $2\ell -m$. In the $\left((\ell-m)+i\right)$th column, all entries are zero except for the entry $1$ occurring in the same row as the $i$th row of zeroes inserted into the matrix constructed in step (i), i.e. the row of zeroes lying above the $i$th diagonal block equal to $-\Id_{q-1}$  in step (i).  Thus we have our $2(\ell-m)\times2(\ell-m)$ matrix $\left(U \bmod T\right)+ \left( \begin{array}{@{}cc@{}} O & O \\ \Id_{\ell} & O \end{array} \right)$.
\end{itemize}
For example,
let $q = 3$, $w = 14 = 7 (3 - 1)$ \ ($\ell = 7 = 2 \cdot 3 + 1$, $m = 2$, $n = 1$). Then we have
\begin{align*}
U \bmod T =
\left( \begin{array}{@{}ccccccc|ccccc@{}}
1 & 0 & 0 & 0 & 0 & 0 & 0 & 0 & 0 & 0 & 0 & 0 \\
0 & 1 & 0 & 0 & 0 & 0 & 0 & 0 & 0 & 0 & 0 & 0 \\
0 & 0 & 1 & 0 & 0 & 0 & 0 & 0 & 0 & 0 & 0 & 0 \\
0 & 0 & 0 & 1 & 0 & 0 & 0 & 0 & 0 & 0 & 0 & 0 \\
0 & 0 & 0 & 0 & 1 & 0 & 0 & 0 & 0 & 0 & 0 & 0 \\ \hline
-1 & 0 & 0 & 0 & 0 & 0 & 0 & -1 & 0 & 0 & 0 & 0 \\
0 & -1 & 0 & 0 & 0 & 1 & 0 & 0 & 0 & 0 & 0 & 0 \\
0 & 0 & -1 & 0 & 0 & 0 & 0 & 0 & -1 & 0 & 0 & 0 \\
0 & 0 & 0 & -1 & 0 & 0 & 0 & 0 & 0 & -1 & 0 & 0 \\
0 & 0 & 0 & 0 & -1 & 0 & 1 & 0 & 0 & 0 & 0 & 0 \\
0 & 0 & 0 & 0 & 0 & -1 & 0 & 0 & 0 & 0 & -1 & 0 \\
0 & 0 & 0 & 0 & 0 & 0 & -1 & 0 & 0 & 0 & 0 & -1 \\
\end{array} \right)
\end{align*}

To evaluate $\det \left(U \bmod T\right)$ we appeal to elementary row operations which do not change the value of the determinant. Adding each of the top $\ell-m$ rows to the corresponding rows in the lower left matrix kills all entries there coming from $-\Id_{\ell}$ except the $m$ bottom ones, which lie in the $((\ell-m)+1)$st one to the $\ell$th column.  But each $-1$ in these last $m$ rows 
can be killed by adding the appropriate row from step (iii) which has all zero entries except $1$ in a unique column between $\ell -m +1$ and $ \ell$ .  Doing this leaves only the non-zero entries in those $m$ columns the ones inserted in step (iii).  We end up with a matrix which has a unique $\pm 1$ in each row and column.  So the elementary product expansion evaluates the determinant as $\pm 1$. Note that one can also use elementary column operations to show the desired identity.

Next we assume that $1 \leq s < q - 1$ and put $w := \ell (q - 1) + s$.
The system of Frobenius equations (E$_{w}$) is
\begin{align} \label{eq-Fr-empty-odd}
\varepsilon_{\emptyset}^{(1)} = \varepsilon_{\emptyset} (t - \theta)^{w} + \sum_{\substack{0 \leq j \leq \ell \\ j \not\equiv \ell - s \bmod q}} \varepsilon_{((\ell - j) (q - 1) + s)}(t - \theta)^{w}
\end{align}
and
\begin{align*}
\varepsilon_{((\ell - j) (q - 1) + s, \fs)}^{(1)} &= \varepsilon_{((\ell - j) (q - 1) + s, \fs)} (t - \theta)^{j (q - 1) - \wt(\fs)} \\
& \ \ \ + \sum_{\substack{s' > 0 \\ (\fs, s') \in \sI(\cI^{\mathrm{ND}_{0}}_{j (q - 1)})}} \varepsilon_{((\ell - j) (q - 1) + s, \fs, s')} (t - \theta)^{j (q - 1) - \wt(\fs)}
\end{align*}
for $0 \leq j \leq \ell$ with $j \not\equiv \ell - s \bmod q$ and $\fs \in \sI(\cI^{\mathrm{ND}_{0}}_{j (q - 1)})$.
Since $(\varepsilon_{((\ell - j) (q - 1) + s, \fs)})_{\fs \in \sI(\cI^{\mathrm{ND}_{0}}_{j (q - 1)})} \in \sX_{j (q - 1)}$ for each $j$, we have
\begin{align*}
\varepsilon_{((\ell - j) (q - 1) + s)} = f_{j} \sum_{i = 0}^{j} b_{j i} (t - \theta)^{i}, \ \
f_{j} \in \Fq(t), \ \
b_{j i} \in T \Fp[T]_{(T)}
\ \ \textrm{and} \ \
b_{j j} = 1.
\end{align*}
By Lemma \ref{lem-degree}, we have $\deg_{\theta} \varepsilon_{\emptyset} \leq \ell$.
We write $\varepsilon_{\emptyset} = \sum_{i = 0}^{\ell} a_{i} (t - \theta)^{i}$ with $a_{i} \in \Fq(t)$.
Then \eqref{eq-Fr-empty-odd} becomes
\begin{align*}
\sum_{j = 0}^{\ell} a_{j} (t - \theta^{q})^{j} = \sum_{i = 0}^{\ell} a_{i} (t - \theta)^{w + i} + \sum_{\substack{0 \leq j \leq \ell \\ j \not\equiv \ell - s \bmod q}} f_{j} \sum_{i = 0}^{j} b_{j i} (t - \theta)^{w + i}.
\end{align*}
This identity can be written as
\begin{align*}
\sum_{i = 0}^{\ell} \sum_{j = i}^{\ell} \binom{j}{i} T^{j - i} a_{j} (t - \theta)^{i q} &- \sum_{i = 0}^{\ell} a_{i} (t - \theta)^{w + i} - \sum_{i = 0}^{\ell} \sum_{\substack{i \leq j \leq \ell \\ j \not\equiv \ell - s \bmod q}} b_{j i} f_{j} (t - \theta)^{w + i} = 0.
\end{align*}
We note that
\begin{align*}
i q < w = (\ell - m) q + (s - n) \ \Longleftrightarrow \ i < \ell - m + \dfrac{s - n}{q}.
\end{align*}

When $s \leq n$, by comparing the coefficients of $(t - \theta)^{\nu}$ for $\nu = i q$ $(0 \leq i < \ell - m)$ or $w \leq \nu \leq w + \ell$, this gives a system of linear equations with $(2 \ell - m + 1)$-equations and $(2 \ell - m + 1)$-variables.
We write $\ba := (a_{0}, \ldots, a_{\ell})^{\tr}$ and $\bff := (f_{0}, \ldots, f_{\ell})^{\tr}$ (excluding $f_{n - s}, f_{q + n - s}, \ldots, f_{\ell - s}$).
Then the system can be written as
\begin{align*}
U \left( \begin{array}{@{}c@{}} \ba \\ \bff \end{array} \right) = \mathbf{0} \ \ \textrm{and} \ \ U \in \Mat_{2 \ell - m + 1}(\Fp[T]_{(T)}).
\end{align*}
Since we aim to show that the solution space $\sX_{w}$ is trivial in this case, it suffices to show that $\det(U \bmod T) \neq 0$ in $\Fp$.
Indeed, we have
\begin{align*}
&(U \bmod T) + \left( \begin{array}{@{}cc@{}} O & O \\ \Id_{\ell + 1} & O \end{array} \right) \\
&= \begin{array}{rc@{\hspace{0.3em}}c@{\hspace{0.3em}}c@{\hspace{0.3em}}c@{\hspace{0.3em}}c@{\hspace{0.3em}}c@{\hspace{0.3em}}c@{\hspace{0.3em}}|@{\hspace{0.3em}}c@{\hspace{0.3em}}c@{\hspace{0.3em}}c@{\hspace{0.3em}}c@{\hspace{0.3em}}c@{\hspace{0.3em}}c@{\hspace{0.3em}}c@{\hspace{0.3em}}c@{\hspace{0.3em}}c@{\hspace{0.3em}}c@{\hspace{0.3em}}c@{\hspace{0.3em}}c@{\hspace{0.3em}}c@{\hspace{0.3em}}c@{\hspace{0.3em}}c@{\hspace{0.3em}}c@{}ll}
& \multicolumn{7}{c}{\overbrace{\hspace{6.0em}}^{\ell + 1}} & \multicolumn{16}{c}{\overbrace{\hspace{20.5em}}^{\ell - m}} & & \\
\ldelim( {24}{4pt}[] & 1 & & & & & & & & & & & & & & & & & & & & & & & \rdelim) {24}{2pt}[] & \rdelim\}{3}{10pt}[{\footnotesize $\ell - m$}] \\
& & \ddots & & & & & & & & & & & & & & & & & & & & & & & \\
& & & 1 & & & & & & & & & & & & & & & & & & & & & & \\
\cline{2-22}
& & & & & & & & - 1 & & \multicolumn{1}{c@{}:}{} & & & & & & & & & & & & & & & \rdelim\}{3}{10pt}[{\footnotesize $n - s$}] \\
& & & & & & & & & \ddots & \multicolumn{1}{c@{}:}{} & & & & & & & & & & & & & & & \\
& & & & & & & & & & \multicolumn{1}{c@{}:}{- 1} & & & & & & & & & & & & & & & \\
\cdashline{2-22}
& & & & 1 & & & & & & \multicolumn{1}{c@{}:}{} & 0 & & & & & & & & & & & & & & \\
\cdashline{2-22}
& & & & & & & & & & & \multicolumn{1}{:c@{}}{- 1} & & \multicolumn{1}{c@{}:}{} & & & & & & & & & & & & \rdelim\}{3}{10pt}[{\footnotesize $q - 1$}] \\
& & & & & & & & & & & \multicolumn{1}{:c@{}}{} & \ddots & \multicolumn{1}{c@{}:}{} & & & & & & & & & & & & \\
& & & & & & & & & & & \multicolumn{1}{:c@{}}{} & & \multicolumn{1}{c@{}:}{- 1} & & & & & & & & & & & & \\
\cdashline{2-22}
& & & & & 1 & & & & & & & & \multicolumn{1}{c@{}:}{} & 0 & & & & & & & & & & & \\
\cdashline{2-22}
& & & & & & & & & & & & & & \ddots & & & & & & & & & & & \\
& & & & & & \ddots & & & & & & & & & \ddots & & & & & & & & & & \vdots \\
& & & & & & & & & & & & & & & & \multicolumn{1}{c@{}}{\ddots} & & & & & & & & & \\
\cdashline{2-22}
& & & & & & & & & & & & & & & & \multicolumn{1}{:c@{}}{- 1} & & \multicolumn{1}{c@{}:}{} & & & & & & & \rdelim\}{3}{10pt}[{\footnotesize $q - 1$}] \\
& & & & & & & & & & & & & & & & \multicolumn{1}{:c@{}}{} & \ddots & \multicolumn{1}{c@{}:}{} & & & & & & & \\
& & & & & & & & & & & & & & & & \multicolumn{1}{:c@{}}{} & & \multicolumn{1}{c@{}:}{- 1} & & & & & & & \\
\cdashline{2-22}
& & & & & & & 1 & & & & & & & & & & & \multicolumn{1}{c@{}:}{} & 0 & & & & & & \\
\cdashline{2-22}
& & & & & & & & & & & & & & & & & & & \multicolumn{1}{:c@{}}{- 1} & & & & & & \rdelim\}{3}{10pt}[{\footnotesize $s$}] \\
& & & & & & & & & & & & & & & & & & & \multicolumn{1}{:c@{}}{} & \ddots & & & & & \\
& & & & & & & & & & & & & & & & & & & \multicolumn{1}{:c@{}}{} & & - 1 & & & &
\end{array}
\end{align*}
and hence $\det(U \bmod T) = \pm 1$.

When $s > n$, by comparing the coefficients of $(t - \theta)^{\nu}$ for $\nu = i q$ $(0 \leq i \leq \ell - m)$ or $w \leq \nu \leq w + \ell$, this gives a system of linear equations with $(2 \ell - m + 2)$-equations and $(2 \ell - m + 2)$-variables.
We write $\ba := (a_{0}, \ldots, a_{\ell})^{\tr}$ and $\bff := (f_{0}, \ldots, f_{\ell})^{\tr}$ (excluding $f_{q + n - s}, f_{2 q + n - s}, \ldots, f_{\ell - s}$).
Then the system can be written as
\begin{align*}
U \left( \begin{array}{@{}c@{}} \ba \\ \bff \end{array} \right) = \mathbf{0} \ \ \textrm{and} \ \ U \in \Mat_{2 \ell - m + 2}(\Fp[T]_{(T)}).
\end{align*}
It is enough to show that $\det(U \bmod T) \neq 0$ in $\Fp$.
Indeed, we have
\begin{align*}
&(U \bmod T) + \left( \begin{array}{@{}cc@{}} O & O \\ \Id_{\ell + 1} & O \end{array} \right) \\
&= \begin{array}{rc@{\hspace{0.3em}}c@{\hspace{0.3em}}c@{\hspace{0.3em}}c@{\hspace{0.3em}}c@{\hspace{0.3em}}c@{\hspace{0.3em}}c@{\hspace{0.3em}}|@{\hspace{0.3em}}c@{\hspace{0.3em}}c@{\hspace{0.3em}}c@{\hspace{0.3em}}c@{\hspace{0.3em}}c@{\hspace{0.3em}}c@{\hspace{0.3em}}c@{\hspace{0.3em}}c@{\hspace{0.3em}}c@{\hspace{0.3em}}c@{\hspace{0.3em}}c@{\hspace{0.3em}}c@{\hspace{0.3em}}c@{\hspace{0.3em}}c@{\hspace{0.3em}}c@{\hspace{0.3em}}c@{}ll}
& \multicolumn{7}{c}{\overbrace{\hspace{6.0em}}^{\ell + 1}} & \multicolumn{16}{c}{\overbrace{\hspace{20.5em}}^{\ell - m + 1}} & & \\
\ldelim( {24}{4pt}[] & 1 & & & & & & & & & & & & & & & & & & & & & & & \rdelim) {24}{2pt}[] & \rdelim\}{3}{10pt}[{\footnotesize $\ell - m + 1$}] \\
& & \ddots & & & & & & & & & & & & & & & & & & & & & & & \\
& & & 1 & & & & & & & & & & & & & & & & & & & & & & \\
\cline{2-22}
& & & & & & & & - 1 & & \multicolumn{1}{c@{}:}{} & & & & & & & & & & & & & & & \rdelim\}{3}{10pt}[{\footnotesize $q + n - s$}] \\
& & & & & & & & & \ddots & \multicolumn{1}{c@{}:}{} & & & & & & & & & & & & & & & \\
& & & & & & & & & & \multicolumn{1}{c@{}:}{- 1} & & & & & & & & & & & & & & & \\
\cdashline{2-22}
& & & & 1 & & & & & & \multicolumn{1}{c@{}:}{} & 0 & & & & & & & & & & & & & & \\
\cdashline{2-22}
& & & & & & & & & & & \multicolumn{1}{:c@{}}{- 1} & & \multicolumn{1}{c@{}:}{} & & & & & & & & & & & & \rdelim\}{3}{10pt}[{\footnotesize $q - 1$}] \\
& & & & & & & & & & & \multicolumn{1}{:c@{}}{} & \ddots & \multicolumn{1}{c@{}:}{} & & & & & & & & & & & & \\
& & & & & & & & & & & \multicolumn{1}{:c@{}}{} & & \multicolumn{1}{c@{}:}{- 1} & & & & & & & & & & & & \\
\cdashline{2-22}
& & & & & 1 & & & & & & & & \multicolumn{1}{c@{}:}{} & 0 & & & & & & & & & & & \\
\cdashline{2-22}
& & & & & & & & & & & & & & \ddots & & & & & & & & & & & \\
& & & & & & \ddots & & & & & & & & & \ddots & & & & & & & & & & \vdots \\
& & & & & & & & & & & & & & & & \multicolumn{1}{c@{}}{\ddots} & & & & & & & & & \\
\cdashline{2-22}
& & & & & & & & & & & & & & & & \multicolumn{1}{:c@{}}{- 1} & & \multicolumn{1}{c@{}:}{} & & & & & & & \rdelim\}{3}{10pt}[{\footnotesize $q - 1$}] \\
& & & & & & & & & & & & & & & & \multicolumn{1}{:c@{}}{} & \ddots & \multicolumn{1}{c@{}:}{} & & & & & & & \\
& & & & & & & & & & & & & & & & \multicolumn{1}{:c@{}}{} & & \multicolumn{1}{c@{}:}{- 1} & & & & & & & \\
\cdashline{2-22}
& & & & & & & 1 & & & & & & & & & & & \multicolumn{1}{c@{}:}{} & 0 & & & & & & \\
\cdashline{2-22}
& & & & & & & & & & & & & & & & & & & \multicolumn{1}{:c@{}}{- 1} & & & & & & \rdelim\}{3}{10pt}[{\footnotesize $s$}] \\
& & & & & & & & & & & & & & & & & & & \multicolumn{1}{:c@{}}{} & \ddots & & & & & \\
& & & & & & & & & & & & & & & & & & & \multicolumn{1}{:c@{}}{} & & - 1 & & & &
\end{array}
\end{align*}
and hence $\det(U \bmod T) = \pm 1$.
\end{proof}

\section{Linear independence}\label{Sec:Linear Indep}

    In this section, we aim to show that $\{\Li_\fs(\bone)\in  k_\infty\mid\fs\in\INDw\}$ is a $k$-linearly independent set. To begin with, we adopt the following setting. Let $(-\theta)^{\frac{1}{q-1}}\in \ok^\times$ be a fixed $(q-1)$st root of $-\theta$. Following~\cite{ABP04}, we consider the following power series
    \begin{align*}
        \Omega(t):=(-\theta)^{\frac{-q}{q-1}}\prod_{i=1}^\infty\left(1-\frac{t}{\theta^{q^i}}\right)\in\power{\overline{k_\infty}}{t}.
    \end{align*}
    Note that it satisfies the Frobenius difference equation
    \begin{align*}
        \Omega^{(-1)}(t)=(t-\theta)\Omega(t).
    \end{align*}
    In addition, we have $\Omega(t)\in\mathcal{E}$ and $\tpi:=1/\Omega(\theta)$ is the fundamental period of the Carlitz module (see~\cite[Sec.~3.1.2]{ABP04}).
    We also set $\LL_{0} := 1$ and $\LL_{d} := (t - \theta^{q}) \cdots (t - \theta^{q^{d}})$ for $d \geq 1$. Put $\cL(\emptyset) := 1$ and  for each index $\fs = (s_{1}, \ldots, s_{r}) \in\cI_{>0}$, we define the following deformation series
    \begin{align*}
        \cL(\fs) := \Omega^{\wt(\fs)} \sum_{d_{1} > \cdots > d_{r} \geq 0} \dfrac{1}{\LL_{d_{1}}^{s_{1}} \cdots \LL_{d_{r}}^{s_{r}}} \in \TT,
    \end{align*}
    which was first introduced by Papanikolas~\cite{P08} for $\dep(\fs)=1$. 
    It is shown in \cite[Lemma 5.3.1]{C14} that $\cL(\fs)\in\cE$. Moreover, by \cite[Proposition 2.3.3]{CPY19}, we have 
    \begin{align*}
        \cL(\fs)|_{t = \theta^{q^{j}}} = (\cL(\fs)|_{t = \theta})^{q^{j}} = (\Li_{\fs}(\bone) / \tpi^{\wt(\fs)})^{q^{j}}
    \end{align*}
    for all $j \geq 0$ (see also \cite[Lemma 5.3.5]{C14}). Let $P = \sum_{\fs \in \cI} a_{\fs} [\fs] \in \cH$ $(a_{\fs} \in k)$. For the convenience of later use, we set
    \begin{align*}
        \Li_P(\bone):=\sum_{\fs} a_{\fs} \Li_{\fs}(\bone).
    \end{align*}

\subsection{The key lemma}
The fundamental system of Frobenius difference equations that $\left\{ \cL(\bn)\right\}_{\bn\in \sI(\fs)}$ satisfy  is given in~\cite[(5.3.3),(5.3.4)]{C14} as well as~\cite[(2.3.4), (2.3.7)]{CPY19} with all $Q_{i}=1$ there, and it plays a crucial role in the proof of the following Lemma when applying ABP-criterion. 
We further mention that the first formulation of the following Lemma arises from the ideas in the proof of~\cite[Thm.~2.5.2]{CPY19}, and the second one is an extension of part of Step~3 in the proof of~\cite[Thm.~6]{ND21}, which dealt with $w\leq 2q-2$.

\begin{lemma} \label{lemma-rational}
Let $w \geq 0$ and $\emptyset\neq M \subset \cI_{w}$.
Let $P = \sum_{\fs \in \cI} a_{\fs} [\fs] \in \cH$ $(a_{\fs} \in k)$ and suppose that
\begin{itemize}
\item $\Supp(P) \subset M$,
\item $\Li_{P}(\bone)$ is Eulerian, that is, $\Li_{P}(\bone) = \sum_{\fs} a_{\fs} \Li_{\fs}(\bone) \in k \cdot \tpi^{w}$,
\item $\Li_{\fs}(\bone)$ $(\fs \in \sI(M) \cap \cI_{\ell})$ are linearly independent over $k$ for each $0 \leq \ell < w$.
\end{itemize}
Then the following hold:
\begin{enumerate}
\item
We have
\begin{align*}
\sum_{\fs' \in \cI_{w - \wt(\fs)}} a_{(\fs, \fs')} \Li_{\fs'}(\bone) = 0
\ \ \textrm{or} \ \
(q - 1) \mid (w - \wt(\fs))
\end{align*}
for each $\fs \in \sI(M)$.
In particular, if $\Li_{\fs'}(\bone)$ $(\fs' \in \cI_{w - \wt(\fs)}, (\fs, \fs') \in \Supp(P))$ are linearly independent over $k$ for each $\fs \in \sI(\Supp(P)) \setminus \{ \emptyset \}$ then
\begin{align*}
\Supp(P) \subset \{ (s_{1}, \ldots, s_{r}) \in \cI_{w} \ | \ (q - 1) \mid s_{2}, \ldots, s_{r} \}.
\end{align*}

\item
The system of Frobenius equations \eqref{eq-Frob} has a solution $(\varepsilon_{\fs})_{\fs \in \sI(M)} \in \Fq(t)[\theta]^{\sI(M)}$ such that $\varepsilon_{\fs} = a_{\fs}|_{\theta = t}$ for each $\fs \in M$.
\end{enumerate}
\end{lemma}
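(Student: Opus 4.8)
The plan is to prove both parts simultaneously by induction on $w$, using the ABP-criterion (Theorem~\ref{T:ABP}) applied to the dual $t$-motive attached to the deformation series $\{\cL(\bn)\}_{\bn \in \sI(M)}$. For $w = 0$ both statements are trivial (the only index is $\emptyset$ and the Frobenius equation is $\varepsilon_\emptyset^{(1)} = \varepsilon_\emptyset$), so assume $w > 0$ and that the lemma holds for all smaller weights. First I would assemble the block-triangular matrix $\Phi \in \Mat_\ell(\ok[t])$, built from the entries $(t-\theta)^{w - \wt(\fs)}$ governing the recursion among the $\cL(\fs)$, $\fs \in \sI(M)$, together with an extra row/column encoding the Eulerian relation $\Li_P(\bone) = c\,\tpi^w$ (after dividing through by $\tpi^w$, i.e. multiplying by $\Omega^w$, this becomes a $\ok$-linear relation among the values at $t = \theta$ of $\Omega^w$ and the $\cL(\fs)$). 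The crucial input is that $\det\Phi$ is, up to a unit, a power of $(t-\theta)$, so it vanishes only at $t = \theta$ and the hypothesis of the ABP-criterion is satisfied; the solution vector $\psi$ has entries in $\cE$ by \cite[Lemma 5.3.1]{C14} and the properties of $\Omega$.

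The ABP-criterion then lifts the $\ok$-linear relation among the $\psi(\theta)$ to a $\ok[t]$-linear relation $\matheur{P}\psi = 0$ with $\matheur{P}(\theta) = \rho$. Following the trick in \cite{C14, CPY19}, I would apply the Frobenius twist $\tau$ repeatedly and use that $\Omega^{(-1)} = (t-\theta)\Omega$ together with the functional equations of the $\cL(\fs)$ to show that the coefficients of $\matheur{P}$, suitably normalized, give a solution $(\varepsilon_\fs)_{\fs \in \sI(M)}$ of the system \eqref{eq-Frob}; here one reads off directly that $\varepsilon_\fs = a_\fs|_{\theta = t}$ for $\fs \in M$ because the corresponding entries of $\psi$ are exactly the $\cL(\fs)$ with $\fs$ at maximal depth in its chain, whose coefficient in the lifted relation specializes to $a_\fs$ at $t = \theta$. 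By Lemma~\ref{lem-degree}, any such solution automatically lies in $\Fq(t)[\theta]^{\sI(M)}$, which gives part (2).

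For part (1), I would extract from the existence of the solution $(\varepsilon_\fs)$ the arithmetic constraint: examining \eqref{eq-Frob} at a fixed $\fs \in \sI(M)$, the equation $\varepsilon_\fs^{(1)} = \big(\varepsilon_\fs + \sum_{s'} \varepsilon_{(\fs,s')}\big)(t-\theta)^{w - \wt(\fs)}$ forces, upon comparing $\theta$-degrees as in the proof of Lemma~\ref{lem-degree}, that either the "sum" factor vanishes or $(q-1) \mid (w - \wt(\fs))$ — and the vanishing of $\varepsilon_\fs + \sum_{s'}\varepsilon_{(\fs,s')}$ translates, via the specialization $t = \theta$ and the linear independence of $\Li_{\fs'}(\bone)$ for $\fs'$ of weight $< w$ (the third hypothesis, used through the inductive structure), into $\sum_{\fs' \in \cI_{w-\wt(\fs)}} a_{(\fs,\fs')}\Li_{\fs'}(\bone) = 0$. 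The final "in particular" clause follows by applying this dichotomy inductively down the chain $\emptyset \subsetneq (s_1) \subsetneq (s_1,s_2) \subsetneq \cdots$: linear independence at each stage kills the first alternative, leaving $(q-1) \mid s_j$ for $j = 2, \ldots, r$. The main obstacle I anticipate is bookkeeping the precise shape of $\Phi$ and verifying the degree/twist computation that turns the lifted $\ok[t]$-relation into an honest solution of \eqref{eq-Frob} with the asserted boundary values $\varepsilon_\fs = a_\fs|_{\theta = t}$; this is where the arguments of \cite[Thm.~2.5.2]{CPY19} and Step~3 of \cite[Thm.~6]{ND21} must be adapted carefully, keeping track of which entries of $\psi$ correspond to $\Omega^w$ versus the various $\cL(\fs)$ and ensuring the normalizing powers of $\Omega$ match up.
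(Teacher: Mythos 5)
Your overall setup (the block matrices $\Phi$, $\Psi$, the extra row encoding $\Li_{P}(\bone)=c\,\tpi^{w}$, and the application of the ABP-criterion) matches the paper's, but there are two genuine gaps. First, after ABP produces $\matheur{P}$ with $\matheur{P}\widetilde{\psi}=0$, the heart of the argument is to show that $\matheur{P}$ (equivalently $\widetilde{\bg}$) satisfies the \emph{same} difference equation, $\widetilde{\bg}^{(-1)}\widetilde{\Phi}=\widetilde{\bg}$. This is not automatic from $\matheur{P}\widetilde{\psi}=0$ and the functional equations: one must form $(B,(B_{\fs})_{\fs},0):=\widetilde{\bg}-\widetilde{\bg}^{(-1)}\widetilde{\Phi}$, observe that it also annihilates $\widetilde{\psi}$, and then kill $B$ and the $B_{\fs}$ weight by weight by dividing out powers of $\Omega$, specializing at $t=\theta^{q^{j}}$, and invoking the third hypothesis (the $k$-linear independence of the $\Li_{\fs}(\bone)$ for $\fs\in\sI(M)\cap\cI_{\ell}$, $\ell<w$). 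Your proposal never says where that hypothesis enters; without this step neither the solution of \eqref{eq-Frob} in part (2) nor the boundary values $\varepsilon_{\fs}=a_{\fs}|_{\theta=t}$ can be extracted. (Also, your proposed induction on $w$ is not how the lemma works: the lower-weight independence is an explicit hypothesis, not an inductive conclusion.)

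Second, and more seriously, your route to part (1) does not go through. You propose to deduce the dichotomy from a $\theta$-degree comparison in the Frobenius equations for the $\varepsilon_{\fs}$, and then to ``translate'' the vanishing of $\varepsilon_{\fs}+\sum_{s'}\varepsilon_{(\fs,s')}$ into $\sum_{\fs'}a_{(\fs,\fs')}\Li_{\fs'}(\bone)=0$. But the $\varepsilon_{(\fs,s')}$ for $(\fs,s')\in\sI(M)\setminus M$ are (twists of) the coefficients $g_{(\fs,s')}$ of the lifted relation, not the tail sums of CMPL values, and no specialization at $t=\theta$ turns the one statement into the other; moreover the degree count in Lemma~\ref{lem-degree} only bounds $\deg_{\theta}\varepsilon_{\fs}$ and does not produce the alternative ``$(q-1)\mid(w-\wt(\fs))$''. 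The actual mechanism is analytic--Galois-theoretic: from $\widetilde{\bg}^{(-1)}\widetilde{\Phi}=\widetilde{\bg}$ one gets $X^{(-1)}=\left(\begin{smallmatrix}\Phi' & 0\\ 0 & 1\end{smallmatrix}\right)X$ for $X=\left(\begin{smallmatrix}\Id & 0\\ \bg & 1\end{smallmatrix}\right)\Psi$, whence $\Psi^{-1}X\in\GL(\Fq(t))$ by \cite[\S 4.1.6]{P08}; specializing $\bg\Psi'+\bff$ at $t=\theta^{q^{j}}$ shows each tail sum satisfies $\sum_{\fs'}a_{(\fs,\fs')}\Li_{\fs'}(\bone)\in k\cdot\tpi^{\,w-\wt(\fs)}$, and the dichotomy then follows because $\tpi\in(-\theta)^{\frac{1}{q-1}}k_{\infty}^{\times}$, so $k\cdot\tpi^{\,j}\cap k_{\infty}=\{0\}$ unless $(q-1)\mid j$. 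This step --- showing that every tail sum is itself Eulerian of the appropriate weight --- is missing from your proposal and cannot be replaced by the degree argument you sketch.
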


\begin{proof} In what follows, our essential arguments are rooted  in the ideas of the proof of~\cite{CPY19}.
Let $\alpha_{\fs} = \alpha_{\fs}(t) := a_{\fs}|_{\theta = t} \in \Fq(t)$, $c := \Li_{P}(\bone) / \tpi^{w} \in k$ and $\sI(M)' := \sI(M) \setminus M$.
We may assume that $w > 0$, $P \neq 0$ and $\alpha_{\fs} \in \Fq[t]$ $(\fs \in \cI)$. In particular, $\sI(M)' \neq \emptyset$.
We define matrices $\Phi' = (\Phi'_{\fn, \fs})_{\fn, \fs \in \sI(M)'} \in \Mat_{|\sI(M)'|}(\ok[t])$ and $\Psi' = (\Psi'_{\fn, \fs})_{\fn, \fs \in \sI(M)'} \in \GL_{|\sI(M)'|}(\TT)$ indexed by the set $\sI(M)'$. They are defined by
\begin{align*}
\Phi'_{\fn, \fs} := \left\{ \begin{array}{ll} (t - \theta)^{w - \wt(\fs)} & (\fs = \fn \ \textrm{or} \ \fs = \fn_{+}) \\ 0 & (\textrm{otherwise}) \end{array} \right.
\textrm{and} \
\Psi'_{\fn, \fs} := \left\{ \begin{array}{ll} \cL(\fs') \Omega^{w - \wt(\fn)} & (\fn = (\fs, \fs')) \\ 0 & (\textrm{otherwise}) \end{array} \right..
\end{align*}
In particular, we have $\Psi'_{\fn, \emptyset} = \cL(\fn) \Omega^{w - \wt(\fn)}$.
We also define row vectors $\bv = (v_{\fs})_{\fs \in \sI(M)'} \in \Mat_{1 \times |\sI(M)'|}(\ok[t])$ and $\bff = (f_{\fs})_{\fs \in \sI(M)'} \in \Mat_{1 \times |\sI(M)'|}(\TT)$ by
\begin{align*}
v_{\fs} := \left\{ \begin{array}{ll} \alpha_{(\fs, w - \wt(\fs))} (t - \theta)^{w - \wt(\fs)} & ((\fs, w - \wt(\fs)) \in M) \\ 0 & (\textrm{otherwise}) \end{array} \right.
\textrm{and} \
f_{\fs} := \sum_{\fs' \in \cI_{w - \wt(\fs)}} \alpha_{(\fs, \fs')} \cL(\fs').
\end{align*}
In particular, we have $f_{\emptyset} := \sum_{\fs' \in \cI_{w}} \alpha_{\fs'} \cL(\fs') = \cL(P)$.
Then we set
\begin{align*}
\Phi := \left( \begin{array}{cc} \Phi' & 0 \\ \bv & 1 \end{array} \right) \in \Mat_{|\sI(M)'| + 1}(\ok[t])
\ \ \textrm{and} \ \
\Psi := \left( \begin{array}{cc} \Psi' & 0 \\ \bff & 1 \end{array} \right) \in \GL_{|\sI(M)'| + 1}(\TT).
\end{align*}
We also set
\begin{align*}
\widetilde{\Phi} := \left( \begin{array}{cc} 1 & 0 \\ 0 & \Phi \end{array} \right) \in \Mat_{|\sI(M)'| + 2}(\ok[t])
\ \ \textrm{and} \ \
\widetilde{\psi} := \left( \begin{array}{c} 1 \\ (\Psi'_{\fn, \emptyset})_{\fn \in \sI(M)'} \\ f_{\emptyset} \end{array} \right) \in \Mat_{(|\sI(M)'| + 2) \times 1}(\TT).
\end{align*}
Then using \cite[(2.3.4), (2.3.7)]{CPY19} one checks that
\begin{align*}
\Psi^{(- 1)} = \Phi \Psi
\ \ \textrm{and} \ \
\widetilde{\psi}^{(- 1)} = \widetilde{\Phi} \widetilde{\psi}.
\end{align*}
We mention that we follow~\cite{ND21} to use double indices to indicate the entries of $\Phi$ here,
and putting such $f_{\phi}$ into a system of Frobenius difference equations was first used by the first named author in~\cite{C16}, and later used in \cite{CH21, ND21}. Such $f_{\phi}$ naturally appears in the period matrix of the fiber coproduct of rigid analytically trivial dual $t$-motives in \cite{CM21}.

By \cite[Theorem 3.1.1]{ABP04}, there exist $g \in \ok(t)$ and $\bg = (g_{\fs})_{\fs \in \sI(M)'} \in \Mat_{1 \times |\sI(M)'|}(\ok(t))$ such that
\begin{align*}
\widetilde{\bg} \widetilde{\psi} = 0, \ \ \textrm{$g$ and $\bg$ are regular at $t = \theta$} \ \ \textrm{and} \ \ \widetilde{\bg}|_{t = \theta} = (- c, 0, \ldots, 0, 1),
\end{align*}
where $\widetilde{\bg} := (g, \bg, 1)$.

We set $B = B(t)$ and $B_{\fs} = B_{\fs}(t)$ in $\ok(t)$ by
\begin{align*}
(B, (B_{\fs})_{\fs \in \sI(M)'}, 0) := \widetilde{\bg} - \widetilde{\bg}^{(- 1)} \widetilde{\Phi} \in \Mat_{1 \times (|\sI(M)'| + 2)}(\ok(t)).
\end{align*}
We claim that $\widetilde{\bg}^{(- 1)} \widetilde{\Phi} = \widetilde{\bg}$, that is, $B = B_{\fs} = 0$ for all $\fs \in \sI(M)'$.
Indeed,
\begin{align*}
B + \sum_{\fs \in \sI(M)'} B_{\fs} \cL(\fs) \Omega^{w - \wt(\fs)}
= (B, (B_{\fs})_{\fs \in \sI(M)'}, 0) \widetilde{\psi}
= (\widetilde{\bg} - \widetilde{\bg}^{(- 1)} \widetilde{\Phi}) \widetilde{\psi}
= \widetilde{\bg} \widetilde{\psi} - (\widetilde{\bg} \widetilde{\psi})^{(- 1)}
= 0
\end{align*}
and $\sI(M)' = \bigsqcup_{\ell = 0}^{w - 1} \sI(M) \cap \cI_{\ell}$ imply the equality
\begin{align} \label{eq-B}
B + \sum_{\fs \in \sI(M) \cap \cI_{w - 1}} B_{\fs} \cL(\fs) \Omega + \sum_{\fs \in \sI(M) \cap \cI_{w - 2}} B_{\fs} \cL(\fs) \Omega^{2} + \cdots + \sum_{\fs \in \sI(M) \cap \cI_{0}} B_{\fs} \cL(\fs) \Omega^{w} = 0.
\end{align}
We note that
\begin{itemize}
\item $B$ and $B_{\fs}$ $(\fs \in \sI(M)')$ are regular at $t = \theta^{q^{i}}$ for $j \gg 0$,
\item $\Omega$ and $\cL(\fs)$ $(\fs \in \cI)$ are entire,
\item $\Omega(\theta^{q^{j}}) = 0$ for $j \geq 1$,
\item $\cL(\fs)|_{t = \theta^{q^{j}}} = (\Li_{\fs}(\bone) / \tpi^{\wt(\fs)})^{q^{j}}$ for $\fs \in \cI$ and $j \geq 0$.
\end{itemize}
Thus evaluating at $t = \theta^{q^{j}}$ for $j \gg 0$ in \eqref{eq-B} implies $B(\theta^{q^{j}}) = 0$.
Since $B$ is rational, we have $B = 0$.
Then dividing \eqref{eq-B} by $\Omega$ and evaluating at $t = \theta^{q^{j}}$ for $j \gg 0$ imply
\begin{align*}
\sum_{\fs \in \sI(M) \cap \cI_{w - 1}} B_{\fs}(\theta^{q^{j}}) (\Li_{\fs}(\bone) / \tpi^{w - 1})^{q^{j}} = 0.
\end{align*}
This is equivalent to
\begin{align*}
\sum_{\fs \in \sI(M) \cap \cI_{w - 1}} B_{\fs}(\theta^{q^{j}})^{q^{- j}} \Li_{\fs}(\bone) = 0.
\end{align*}
By the assumption of linear independence and \cite[Theorem 2.2.1]{C14}, we have $B_{\fs}(\theta^{q^{j}}) = 0$.
Thus we have $B_{\fs} = 0$ for all $\fs \in \sI(M) \cap \cI_{w - 1}$.
Repeating this process, we have $B_{\fs} = 0$ for all $\fs \in \sI(M)'$.
We note that the claim implies
\begin{align} \label{eq-lemma-claim}
\left( \begin{array}{cc} \Id & 0 \\ \bg & 1 \end{array} \right)^{(- 1)} \left( \begin{array}{cc} \Phi' & 0 \\ \bv & 1 \end{array} \right) = \left( \begin{array}{cc} \Phi' & 0 \\ 0 & 1 \end{array} \right) \left( \begin{array}{cc} \Id & 0 \\ \bg & 1 \end{array} \right).
\end{align}

(1)
We set
\begin{align*}
X := \left( \begin{array}{cc} \Id & 0 \\ \bg & 1 \end{array} \right) \Psi = \left( \begin{array}{cc} \Psi' & 0 \\ \bg \Psi' + \bff & 1 \end{array} \right) \in \GL_{|\sI(M)'| + 1}(\Frac(\TT)).
\end{align*}
By using \eqref{eq-lemma-claim}, we can verify that $X^{(- 1)} = \left( \begin{array}{cc} \Phi' & 0 \\ 0 & 1 \end{array} \right) X$.
Thus by \cite[\S 4.1.6]{P08},
\begin{align*}
\GL_{|\sI(M)'| + 1}(\Fq(t))
\ni \left( \begin{array}{cc} \Psi' & 0 \\ 0 & 1 \end{array} \right)^{- 1} X
= \left( \begin{array}{cc} \Id & 0 \\ \bg \Psi' + \bff & 1 \end{array} \right)
=: \left( \begin{array}{cc} \Id & 0 \\ (h_{\fs})_{\fs \in \sI(M)'} & 1 \end{array} \right).
\end{align*}
Then evaluating at $t = \theta^{q^{j}}$ for $j \gg 0$ in $\bg \Psi' + \bff = (h_{\fs})_{\fs \in \sI(M)'}$, we have $f_{\fs}(\theta^{q^{j}}) = h_{\fs}(\theta^{q^{j}}) = h_{\fs}(\theta)^{q^{j}}$.
Since
\begin{align*}
f_{\fs}(\theta^{q^{j}}) = \sum_{\fs' \in \cI_{w - \wt(\fs)}} \alpha_{(\fs, \fs')}(\theta^{q^{j}}) \cL(\fs')|_{t = \theta^{q^{j}}}
= \sum_{\fs' \in \cI_{w - \wt(\fs)}} \alpha_{(\fs, \fs')}(\theta)^{q^{j}} (\Li_{\fs'}(\bone) / \tpi^{w - \wt(\fs)})^{q^{j}}
= f_{\fs}(\theta)^{q^{j}},
\end{align*}
we have $f_{\fs}(\theta) = h_{\fs}(\theta) \in k$ for each $\fs \in \sI(M)'$.
Thus we have
\begin{align*}
\sum_{\fs' \in \cI_{w - \wt(\fs)}} a_{(\fs, \fs')} \Li_{\fs'}(\bone) \in k \cdot \tpi^{w - \wt(\fs)}
\end{align*}
for each $\fs \in \sI(M)'$.
We note that this holds whenever $\fs \in M$.
Then the first statement of (1) follows from this relation because $\tpi \in (- \theta)^{\frac{1}{q - 1}} \cdot k_{\infty}^{\times}$.

Next we prove the second statement of (1).
Let $(\fs, \fs'') \in \Supp(P)$ with $\fs \neq \emptyset$.
Then by the first statement of (1) and the assumption on the linear independence of the second statement of (1), we have $(q - 1) \mid (w - \wt(\fs)) = \wt(\fs'')$.

(2)
By \eqref{eq-lemma-claim} and \cite[Proposition 2.2.1]{CPY19}, there exists $\alpha \in \Fq[t] \setminus \{ 0 \}$ such that $\alpha \bg \in \Mat_{1 \times |\sI(M)|}(\ok[t])$.
If we set $\varepsilon_{\fs} := g_{\fs}^{(- 1)}$ $(\fs \in \sI(M)')$ and $\varepsilon_{\fs} := \alpha_{\fs} \in \Fq(t)$ $(\fs \in M)$, then
\begin{align*}
((\varepsilon_{\fs})_{\fs \in \sI(M)'}, 1) \left( \begin{array}{c} \Phi' \\ \bv \end{array} \right) = (\varepsilon_{\fs}^{(1)})_{\fs \in \sI(M)'}
\ \ \textrm{and} \ \
\varepsilon_{\fs}^{(1)} = \varepsilon_{\fs} \ (\fs \in M)
\end{align*}
give the desired equations.
\end{proof}

\subsection{Linear independence}
With the crucial properties of Lemma~\ref{lemma-rational} established, we are able to show the following linear independence result.

\begin{theorem} \label{theorem-IND-basis}
For each $w \geq 0$, the elements
\begin{align*}
\Li_{\fs}(\bone) \ \ \textrm{with} \ \ \fs \in \INDw
\end{align*}
form a basis of $\cZ_{w}$.
\end{theorem}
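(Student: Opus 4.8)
The plan is to deduce everything from the two facts already established: Theorem~\ref{theorem-generator}, which says $\{\Li_{\fs}(\bone)\mid\fs\in\INDw\}$ spans $\cZ_{w}$, and Theorem~\ref{theorem-dimension}, which computes $\dim_{\Fq(t)}\sX_{w}$. Since the set spans, it will suffice to prove it is $k$-linearly independent; then it is automatically a basis. I will argue by induction on $w$, the case $w=0$ being trivial ($\cZ_{0}=k$, $\Li_{\emptyset}(\bone)=1$). So assume $w\ge 1$ and that $\{\Li_{\fs}(\bone)\mid\fs\in\IND_{\ell}\}$ is a basis of $\cZ_{\ell}$ for all $\ell<w$. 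Suppose $P=\sum_{\fs\in\INDw}\alpha_{\fs}(\theta)[\fs]\in\cH_{w}$ with $\alpha_{\fs}\in\Fq(t)$ satisfies $\Li_{P}(\bone)=0$; the goal is to show $P=0$.

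\textbf{Support reduction.} First I would apply Lemma~\ref{lemma-rational}~(1) with $M=\INDw$. Its hypotheses hold: $\Supp(P)\subset\INDw$; $\Li_{P}(\bone)=0\in k\cdot\tpi^{w}$ is (trivially) Eulerian; and for every $\ell<w$ one has $\sI(\INDw)\cap\cI_{\ell}\subset\IND_{\ell}$ (a prefix of an index with no part divisible by $q$ still has no part divisible by $q$), so the relevant $\Li(\bone)$'s are $k$-linearly independent by the induction hypothesis; the same hypothesis supplies the finer independence needed for the ``in particular'' part of Lemma~\ref{lemma-rational}~(1), because for $\fs\in\sI(\Supp P)\setminus\{\emptyset\}$ the tails $\fs'$ with $(\fs,\fs')\in\Supp(P)$ lie in $\IND_{w-\wt(\fs)}$ with $w-\wt(\fs)<w$. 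This forces $\Supp(P)\subset\{(s_{1},\ldots,s_{r})\in\cI_{w}\mid (q-1)\mid s_{2},\ldots,s_{r}\}$, which together with $\Supp(P)\subset\INDw$ gives $\Supp(P)\subset\INDzw$. Next I would invoke Lemma~\ref{lemma-rational}~(2) with $M=\INDzw$ (legitimate now that $\Supp(P)\subset\INDzw$), obtaining a solution $(\varepsilon_{\fs})_{\fs\in\sI(\INDzw)}\in\Fq(t)[\theta]^{\sI(\INDzw)}$ of \eqref{eq-Frob-w} with $\varepsilon_{\fs}=\alpha_{\fs}$ for every $\fs\in\INDzw$; thus $(\varepsilon_{\fs})\in\sX_{w}$.

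\textbf{The dichotomy and the endgame.} By Theorem~\ref{theorem-dimension}, if $(q-1)\nmid w$ then $\sX_{w}=0$, so $\varepsilon_{\fs}=0$ for all $\fs$, hence $\alpha_{\fs}=0$ for every $\fs\in\INDzw\supset\Supp(P)$ and $P=0$. If $(q-1)\mid w$, write $w=\ell(q-1)$ and let $(\eta_{\ell;\fs})$ be the explicit generator of the one‑dimensional space $\sX_{w}$ from Theorem~\ref{theorem-dimension}, normalized so that the top coefficient of $\eta_{\ell;\emptyset}=\sum_{i=0}^{\ell}b_{\ell i}(t-\theta)^{i}$ is $b_{\ell\ell}=1$. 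Then $(\varepsilon_{\fs})=\lambda\cdot(\eta_{\ell;\fs})$ for some $\lambda\in\Fq(t)$, so restricting to $\INDzw$ and recalling $\varepsilon_{\fs}|_{t=\theta}=\alpha_{\fs}(\theta)$ turns $\Li_{P}(\bone)=0$ into $\lambda(\theta)\cdot E=0$ with $E:=\sum_{\fs\in\INDzw}\eta_{\ell;\fs}(\theta)\,\Li_{\fs}(\bone)$. It remains to show $E\neq 0$, for then $\lambda=0$ and $P=0$. Here I would bring in the Carlitz value $\zeta_{A}(q-1)$, which is a nonzero $k$-multiple of $\tpi^{q-1}$ (by Carlitz's evaluation at multiples of $q-1$, cf.~\cite{T04}, together with Thakur's non-vanishing~\cite{T09}), so that $\zeta_{A}(q-1)^{\ell}\in k^{\times}\cdot\tpi^{w}$ lies in $\cZ_{w}$; writing it as $\sum_{\fs\in\INDw}d_{\fs}(\theta)\Li_{\fs}(\bone)$ via Theorem~\ref{theorem-generator} and rerunning the same machinery (Lemma~\ref{lemma-rational}~(1) forces its support into $\INDzw$ since the combination is Eulerian, and Lemma~\ref{lemma-rational}~(2) together with $\dim\sX_{w}=1$ forces $(d_{\fs})|_{\INDzw}=\nu\cdot(\eta_{\ell;\fs})|_{\INDzw}$ for some $\nu\in\Fq(t)$) shows $0\neq\zeta_{A}(q-1)^{\ell}=\nu(\theta)\,E$, whence $E\neq 0$. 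Therefore $\lambda=0$, $P=0$, and the induction closes; combined with the spanning statement of Theorem~\ref{theorem-generator} this gives the basis property.

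\textbf{Main obstacle.} The substantive engines—Lemma~\ref{lemma-rational} (an ABP-criterion argument) and the dimension formula of Theorem~\ref{theorem-dimension}—are already in hand, and the first two steps are essentially bookkeeping around the inclusions $\sI(\INDw)\cap\cI_{\ell}\subset\IND_{\ell}$ and $\Supp(P)\subset\INDw\cap\{(s_{i}):(q-1)\mid s_{2},\ldots,s_{r}\}$. I expect the only genuinely delicate point to be the endgame when $(q-1)\mid w$: there $\sX_{w}\neq 0$, so one cannot simply read off $\lambda=0$, and the argument must reconcile the one-dimensional $\sX_{w}$ with an independently known nonzero Eulerian element of $\cZ_{w}$ (namely $\zeta_{A}(q-1)^{\ell}$) in order to conclude that $\sX_{w}$ contributes only the trivial linear relation. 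Getting the normalization $b_{\ell\ell}=1$ and the proportionality $(d_{\fs})|_{\INDzw}=\nu\,(\eta_{\ell;\fs})|_{\INDzw}$ to interact correctly is the crux.
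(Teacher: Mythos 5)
Your proposal is correct and follows essentially the same route as the paper: induction on $w$, Lemma~\ref{lemma-rational}~(1) to push the support into $\INDzw$, Lemma~\ref{lemma-rational}~(2) to produce an element of $\sX_{w}$, and the dimension dichotomy of Theorem~\ref{theorem-dimension}, with the $(q-1)\mid w$ case settled by comparing against a nonzero Eulerian element of $\cZ_{w}$ expressed through Theorem~\ref{theorem-generator}. The only (cosmetic) differences are that the paper takes an element $P_{w}$ with $\Li_{P_{w}}(\bone)=\tpi^{w}$ and compares $P$ to $P_{w}$ directly via $\dim_{\Fq(t)}\sX_{w}=1$, whereas you use $\zeta_{A}(q-1)^{\ell}$ as the Eulerian witness and route both relations through the explicit generator $(\eta_{\ell;\fs})$.
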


\begin{proof} (cf.~\cite[p.~388]{ND21} for the special case of $k$-linear independence of  $\zeta_{A}(w)$ and $\zeta_{A}(w - (q - 1), q - 1)$ with $w \leq 2 q - 2$.)
By Theorem~\ref{theorem-generator},  $\left\{ \Li_{\fs}(\bone)| \fs\in \INDw  \right\}$ is a generating set for $\cZ_{w}$.
Thus it is enough to show that the given elements are linearly independent over $k$.
This is proved by induction on $w$.
When $w = 0$ we have $\IND_{0} = \{ \emptyset \}$, and $\Li_{\emptyset}(\bone) = 1$ is linearly independent over $k$.

Let $w \geq 1$ and assume that the elements $\Li_{\fs}(\bone)$ with $\fs \in \IND_{w'}$ are linearly independent over $k$ for $w' < w$.
Let $P = \sum_{\fs \in \cI} \alpha_{\fs}(\theta) [\fs] \in \cH$ $(\alpha_{\fs} = \alpha_{\fs}(t) \in \Fq(t))$ be a linear relation among $\Li_{\fs}(\bone)$'s over $k$ such that $\Supp(P) \subset \INDw$.
By Lemma \ref{lemma-rational} (1) for $M = \Supp(P)$ and the induction hypothesis, we have $\Supp(P) \subset \INDzw$.
By Lemma \ref{lemma-rational} (2) for $M = \INDzw$, there exists a solution $(\varepsilon_{\fs}) \in \sX_{w}$ such that $\varepsilon_{\fs} = \alpha_{\fs}$ for all $\fs \in \INDzw$.

When $(q - 1) \nmid w$, Theorem \ref{theorem-dimension} implies that $\alpha_{\fs} = 0$ for all $\fs \in \INDzw$.
When $(q - 1) \mid w$, Theorem \ref{theorem-generator} implies that there exists $P_{w} = \sum_{\fs \in \cI} \beta_{\fs}(\theta) [\fs] \in \cH_{w}$ $(\beta_{\fs}(t) \in \Fq(t))$ such that $\Supp(P_{w}) \subset \INDw$ and $\Li_{P_{w}}(\bone) = \tpi^{w}$. 
It is clear that $P_{w} \neq 0$.
By Lemma \ref{lemma-rational} (1) for $M = \Supp(P_{w})$, we have $\Supp(P_{w}) \subset \INDzw$.
By Lemma \ref{lemma-rational} (2) for $M = \INDzw$, there exists a solution $(\varepsilon_{\fs}') \in \sX_{w}$ such that $\varepsilon'_{\fs} = \beta_{\fs}$ for all $\fs \in \INDzw$.
Then by Theorem~\ref{theorem-dimension}, there exists an $\alpha \in \FF_{q}(t)$ for which $(\varepsilon_{\fs})=\alpha (\varepsilon_{\fs}')$, and hence we have $P = \alpha(\theta) P_{w}$.
Then we have
\begin{align*}
0 = \Li_{P}(\bone) = \Li_{\alpha (\theta) P_{w}}(\bone) = \alpha(\theta) \tpi^{w}.
\end{align*}
Thus $\alpha = 0$ and $P = 0$.
\end{proof}

\subsection{Proof of Theorem~\ref{T:Main Thm}}\label{Sec: Proof of Main Thm} With the fundamental results established, we can now give a short proof of Theorem~\ref{T:Main Thm}. Given $w\in \ZZ_{>0}$, we claim that $\mathcal{B}_{w}^{\rm{T}}$ is a basis of the $k$-vector space $\cZ_{w}$. By Theorem~\ref{theorem-IND-basis}, $\left\{ \Li_{\fs}(\bone)| \fs\in \INDw  \right\}$ is a $k$-basis of $\cZ_{w}$. Since $|\INDw|=|\ITw|$ by Proposition~\ref{Pop:|ITw|=|INDw|}, and $\cB^{\rm{T}}_{w}$ is a generating set of $\cZ_{w}$ by Corollary \ref{Cor:generating set},
we have that
\[|\mathcal{B}_{w}^{\rm{T}}| \geq \dim_{k} \cZ_{w}=|\INDw|=|\ITw|\geq |\mathcal{B}_{w}^{\rm{T}}|,\] whence the desired result follows.

\subsection{Generating set of linear relations}\label{Sub:Generating set of linear relations}
Recall the $k$-linear map $\sU^{\zeta}$ defined in Definition~\ref{def-sU}, and Theorem~\ref{theorem-algo} asserts that for $\fs\in \cI_{w} \setminus \ITw$,
\begin{equation}\label{E:Main Relations}
\sL^{\zeta}\left( [\fs]-\sU^{\zeta}(\fs) \right)=0. 
\end{equation}
We prove in the following theorem, which verifies the $\sB^{*}$-version of~\cite[Conjecture~5.1]{To18}, that these relations account for all $k$-linear relations among  MZV's of the same weight. 

\begin{theorem}\label{T: DetermineLR}
Let $w$ be a positive integer.  Then all the $k$-linear relations among the MZV's of weight $w$ are generated by~\eqref{E:Main Relations} for $\fs\in \cI_{w} \setminus  \ITw$.  In other words, if we denote by $\sLZ_{w}:= \sLZ|_{\cH_{w}}:=\left( [\fs]\mapsto \zeta_{A}(\fs) \right) :\cH_{w} \twoheadrightarrow \cZ_{w}$ and put
\[ \sR_{w}:=\Span_{k}\left\{[\fs]-\sU^{\zeta}(\fs) \mid  \fs\in \cI_{w} \setminus  \ITw \right\}\subset \cH_{w} ,\] 
 then 
\[\Ker\sLZ_{w}= \sR_{w}.\]
Moreover, we have \[\Ker \sLZ=\bigoplus_{w\in \NN}\sR_{w}.\]
\end{theorem}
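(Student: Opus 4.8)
The plan is to prove the weight-graded statement $\Ker\sLZ_{w}=\sR_{w}$ and then conclude the global statement by taking the direct sum over $w$, using that $\cH=\bigoplus_{w}\cH_{w}$, that $\sLZ$ respects the grading (each $\zeta_{A}(\fs)$ lies in $k_{\infty}$ with $\sLZ(\cH_{w})\subset\cZ_{w}$), and that $k$-linear relations among MZV's only occur among those of the same weight by~\cite{C14}. So everything reduces to the graded identity, and there the argument is a dimension count combined with the already-established relations.

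First I would check the easy inclusion $\sR_{w}\subset\Ker\sLZ_{w}$: for each $\fs\in\cI_{w}\setminus\ITw$, Theorem~\ref{theorem-algo}~(1) applied with $\bullet=\zeta$ gives $\sLZ(\sU^{\zeta}(\fs))=\sLZ(\fs)$, i.e.\ $\sLZ_{w}([\fs]-\sU^{\zeta}(\fs))=\zeta_{A}(\fs)-\zeta_{A}(\fs)=0$; by $k$-linearity all of $\sR_{w}$ is killed, so $\sR_{w}\subseteq\Ker\sLZ_{w}$. Next I would compute $\dim_{k}\sR_{w}$. The generators are indexed by $\fs\in\cI_{w}\setminus\ITw$, and I claim they are $k$-linearly independent, so that $\dim_{k}\sR_{w}=|\cI_{w}\setminus\ITw|=|\cI_{w}|-|\ITw|$. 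For independence, note that by Remark~\ref{Rem:Us=s} we have $\sU^{\zeta}(\fn)=\fn$ for $\fn\in\ITw$, while for $\fs\notin\ITw$ the element $[\fs]-\sU^{\zeta}(\fs)$ has $[\fs]$ appearing with coefficient $1$ (inspecting Definition~\ref{def-sU}: in both cases the only term supported outside $\ITw$ and equal to $[\fs]$ itself is the leading $-[\fs^{\rT},q^{\{m+1\}},\fs'']$, resp.\ nothing in the $\fs'=\emptyset$ case where $\fs\in\IT$ anyway; more carefully, one argues by the descent of Theorem~\ref{theorem-algo}~(2) that iterating $\sU^{\zeta}$ eventually lands in $\IT$, so $[\fs]\notin\ITw$ forces each generator to involve a genuinely new index not hit by the others). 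Cleanly, I would instead argue: the linear map $\cH_{w}\to\cH_{w}$, $[\fs]\mapsto[\fs]-\sU^{\zeta}(\fs)$ for $\fs\notin\ITw$ and $[\fn]\mapsto 0$ for $\fn\in\ITw$ is, after composing with the projection onto the span of $\{[\fs]:\fs\notin\ITw\}$ along $\{[\fn]:\fn\in\ITw\}$, unitriangular with respect to a suitable ordering of indices (refining by the number of iterations of $\sU^{\zeta}$ needed to reach $\IT$), hence the images are independent and $\dim_{k}\sR_{w}=|\cI_{w}|-|\ITw|$.

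Finally I would close the loop by dimension count. Since $\sLZ_{w}\colon\cH_{w}\twoheadrightarrow\cZ_{w}$ is surjective and $\dim_{k}\cH_{w}=|\cI_{w}|$, the rank--nullity theorem gives $\dim_{k}\Ker\sLZ_{w}=|\cI_{w}|-\dim_{k}\cZ_{w}$. By Theorem~\ref{T:Main Thm} (equivalently Theorem~\ref{theorem-IND-basis} together with Proposition~\ref{Pop:|ITw|=|INDw|}), $\dim_{k}\cZ_{w}=|\ITw|$. Hence $\dim_{k}\Ker\sLZ_{w}=|\cI_{w}|-|\ITw|=\dim_{k}\sR_{w}$, and combined with the inclusion $\sR_{w}\subseteq\Ker\sLZ_{w}$ this forces $\sR_{w}=\Ker\sLZ_{w}$. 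For the global statement, $\Ker\sLZ=\bigoplus_{w\in\NN}\Ker\sLZ_{w}=\bigoplus_{w\in\NN}\sR_{w}$, where the first equality uses that $\sLZ$ is graded and kills an element iff it kills each graded piece (again invoking~\cite{C14} that there are no linear relations across different weights). The main obstacle I anticipate is the linear independence of the generators of $\sR_{w}$ — i.e.\ verifying $\dim_{k}\sR_{w}=|\cI_{w}\setminus\ITw|$ rather than something smaller — which is exactly what makes the relations in~\eqref{E:Main Relations} a \emph{basis} of, not merely a spanning set for, the relation space; this is handled by the unitriangularity coming from the ``descent to $\IT$'' structure of $\sU^{\zeta}$ in Theorem~\ref{theorem-algo}~(2) and Remark~\ref{Rem:Us=s}. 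Everything else is formal bookkeeping with the grading.
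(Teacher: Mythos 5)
Your proposal is correct and rests on the same two pillars as the paper's proof --- the inclusion $\sR_{w}\subseteq\Ker\sLZ_{w}$ from Theorem~\ref{theorem-algo}~(1) and the equality $\dim_{k}\cZ_{w}=|\ITw|$ from Theorem~\ref{T:Main Thm} --- but you run the dimension count on the dual side. The paper shows that the classes of $\ITw$ span $\cH_{w}/\sR_{w}$ (via $P\equiv\sU^{\zeta}(P)\equiv\cdots\equiv(\sU^{\zeta})^{e}(P)\bmod\sR_{w}$), giving $|\ITw|\geq\dim_{k}\cH_{w}/\sR_{w}\geq\dim_{k}\cH_{w}/\Ker\sLZ_{w}=|\ITw|$, whereas you prove that the $|\cI_{w}\setminus\ITw|$ generators of $\sR_{w}$ are linearly independent and compare with $\dim_{k}\Ker\sLZ_{w}=|\cI_{w}|-|\ITw|$ from rank--nullity; the two bounds are equivalent, but yours requires the extra independence claim, which is exactly the point to be careful about. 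Your suggested ordering by ``the number of iterations of $\sU^{\zeta}$ needed to reach $\IT$'' is not obviously triangular, since cancellations among the terms of $\sU^{\zeta}(\fs)$ could make that count drop faster for the sum than for its individual constituents. Two clean repairs are available from the paper: (i) order indices by the triple $(\dep(\fn);\Init(\fn);-n_{\dep(\Init(\fn))+1})$ as in Lemma~\ref{lemma-123} and the proof of Theorem~\ref{theorem-sUe}, with respect to which every $\fn\in\Supp(\sU^{\zeta}(\fs))$ is strictly larger than $\fs$ whenever $\fs\notin\ITw$; or (ii) let $\pi$ be the projection of $\cH_{w}$ onto $V:=\Span_{k}\{[\fs]\mid\fs\in\cI_{w}\setminus\ITw\}$ along the span of $\ITw$ and set $N:=\pi\circ\sU^{\zeta}|_{V}$; Remark~\ref{Rem:Us=s} gives $\pi\circ(\sU^{\zeta})^{j}=N^{j}\circ\pi$, so Theorem~\ref{theorem-algo}~(2) forces $N^{e}=0$, hence $\Id-N$ is invertible on $V$ and carries the basis $\{[\fs]\}$ to the projections of your generators. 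With either fix your argument closes; the paper's quotient formulation sidesteps the issue by needing only the spanning statement. Your handling of the direct-sum claim via the weight-separation result of \cite{C14} agrees with the paper.
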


\begin{proof}
Theorem~\ref{theorem-algo} implies the inclusion $\Ker \sLZ_{w} \supset \sR_{w}$, and Remark~\ref{Rem:Us=s} implies
\[ \sR_{w}=\Span_{k}\left\{P-\sU^{\zeta}(P) \mid  P \in \cH_{w} \right\}.\]
We fix a positive integer $e$ given in Theorem~\ref{theorem-algo}.
Then for each $P \in \cH_{w}$, we have
\[ P \equiv \sU^{\zeta}(P) \equiv \cdots \equiv (\sU^{\zeta})^{e}(P) \bmod \sR_{w}.\]
By these relations and Theorem~\ref{theorem-algo}, the quotient space $\cH_{w} / \sR_{w}$ is spanned by the image of $\ITw$.
It follows that
\[ |\IT_{w}| \geq \dim_{k} \cH_{w} / \sR_{w} \geq \dim_{k} \cH_{w} / \Ker \sLZ_{w} = \dim_{k} \cZ_{w} = |\IT_{w}|, \]
where the last equality comes from Theorem~\ref{T:Main Thm}.
So we have $\Ker \sLZ_{w}=\sR_{w}$.  By~\cite[Thm.~2.2.1]{C14},  the last assertion follows.
\end{proof}

\appendix

\section{Proof of Theorem \ref{theorem-algo}}

The aim of this appendix is to give a detailed proof of Theorem \ref{theorem-algo}.

\subsection{Inequalities of depth}

\begin{proposition} \label{prop-product-depth}
Let $\bullet\in \left\{\Li,\zeta \right\}$ and $\fs, \fn \in \cI$ be indices. Then for each $\fu \in \Supp(\fs *^{\bullet} \fn)$, we have
\begin{align*}
\max\{ \dep(\fs), \dep(\fn) \} \leq \dep(\fu) \leq \dep(\fs) + \dep(\fn).
\end{align*}

\end{proposition}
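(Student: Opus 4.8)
The plan is to prove the two inequalities simultaneously by induction on $N := \dep(\fs) + \dep(\fn)$. The elementary input used throughout is that for any integer $c \geq 1$ and any $R = \sum_{\ft \in \cI} a_{\ft} [\ft] \in \cH$ one has $[c, R] = \sum_{\ft} a_{\ft} [c, \ft]$ with the $[c, \ft]$ pairwise distinct nonzero generators, so that $\Supp([c, R]) = \{ (c, \ft) \mid \ft \in \Supp(R) \}$ and each such $(c, \ft)$ has depth exactly $1 + \dep(\ft)$. I will combine this with the trivial fact $\Supp(P + Q) \subseteq \Supp(P) \cup \Supp(Q)$ and with the identities $1 + \dep(\fs_{-}) = \dep(\fs)$, $1 + \dep(\fn_{-}) = \dep(\fn)$.

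For the base of the induction — and, more generally, whenever $\fs = \emptyset$ or $\fn = \emptyset$ — the unital axiom gives $\fs *^{\bullet} \fn = \fn$ or $\fs *^{\bullet} \fn = \fs$, so $\Supp(\fs *^{\bullet} \fn)$ is a singleton $\{ \fu \}$ with $\dep(\fu) = \max\{ \dep(\fs), \dep(\fn) \} = \dep(\fs) + \dep(\fn)$ and both inequalities are equalities. So assume $N \geq 2$ and $\fs = (s_{1}, \fs_{-})$, $\fn = (n_{1}, \fn_{-})$ both lie in $\cI_{> 0}$. Expanding the recursive definition of $*^{\bullet}$, every $\fu \in \Supp(\fs *^{\bullet} \fn)$ lies in one of $\Supp([s_{1}, \fs_{-} *^{\bullet} \fn])$, $\Supp([n_{1}, \fs *^{\bullet} \fn_{-}])$, $\Supp([s_{1} + n_{1}, \fs_{-} *^{\bullet} \fn_{-}])$, or $\Supp(D^{\bullet}_{\fs, \fn})$. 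For the first three I apply the induction hypothesis to the pairs $(\fs_{-}, \fn)$, $(\fs, \fn_{-})$, $(\fs_{-}, \fn_{-})$, whose depth–sums are $N - 1$, $N - 1$, $N - 2$, all $< N$. Together with the elementary fact above each piece gives the required bounds after a one–line computation: e.g.\ if $\fu = (s_{1}, \ft)$ with $\ft \in \Supp(\fs_{-} *^{\bullet} \fn)$, then $\dep(\fu) = 1 + \dep(\ft) \leq 1 + \dep(\fs_{-}) + \dep(\fn) = \dep(\fs) + \dep(\fn)$, while $\dep(\fu) \geq 1 + \max\{ \dep(\fs_{-}), \dep(\fn) \} = \max\{ \dep(\fs), 1 + \dep(\fn) \} \geq \max\{ \dep(\fs), \dep(\fn) \}$; the other two are analogous (the third even yields the sharper upper bound $\dep(\fs) + \dep(\fn) - 1$).

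It remains to treat $D^{\bullet}_{\fs, \fn}$. When $\bullet = \Li$ we have $D^{\Li}_{\fs, \fn} = 0$ by \eqref{E:DLi}, so there is nothing to check. When $\bullet = \zeta$, formula \eqref{E:DZetaS} exhibits $D^{\zeta}_{\fs, \fn}$ as a $k$–linear combination of the elements $[\, s_{1} + n_{1} - j,\ (j) \sz (\fs_{-} \sz \fn_{-}) \,]$ for $1 \leq j \leq s_{1} + n_{1} - 1$. Here I use the induction hypothesis twice: first for $(\fs_{-}, \fn_{-})$ to bound $\dep(\ft)$ for all $\ft \in \Supp(\fs_{-} \sz \fn_{-})$ by $\max\{ \dep(\fs_{-}), \dep(\fn_{-}) \} \leq \dep(\ft) \leq \dep(\fs_{-}) + \dep(\fn_{-})$, and then, for each such $\ft$, for the pair $((j), \ft)$ — admissible because its depth–sum is $1 + \dep(\ft) \leq 1 + \dep(\fs_{-}) + \dep(\fn_{-}) = N - 1 < N$. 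By bilinearity of $\sz$, $\Supp((j) \sz (\fs_{-} \sz \fn_{-})) \subseteq \bigcup_{\ft} \Supp((j) \sz [\ft])$, so any element of it has depth $\delta$ with $\max\{ 1, \dep(\ft) \} \leq \delta \leq 1 + \dep(\ft)$ for some such $\ft$; prefixing by $[\, s_{1} + n_{1} - j, \cdot\, ]$ raises this by $1$, and a direct computation with the two sandwiching inequalities for $\dep(\ft)$ yields $\max\{\dep(\fs),\dep(\fn)\} \leq \dep(\fu) \leq \dep(\fs)+\dep(\fn)$ for every $\fu \in \Supp(D^{\zeta}_{\fs,\fn})$. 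Taking the union over the (at most) four pieces closes the induction.

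The main obstacle is purely organisational: one must pick the induction parameter so that it also governs the \emph{nested} product $(j) \sz (\fs_{-} \sz \fn_{-})$ inside $D^{\zeta}$ — induction on the sum of depths does exactly this, whereas induction on $\dep(\fs)$ alone, or a naive induction on weight, would not obviously reach the auxiliary pair $((j), \ft)$. The degenerate subcases $\fs_{-} = \emptyset$ and/or $\fn_{-} = \emptyset$ (so that $\fs_{-} \sz \fn_{-}$, or one of the other inner products, is a single generator, possibly $\emptyset$) need no special treatment, since $\dep(\emptyset) = 0$ makes all the displayed inequalities degrade gracefully.
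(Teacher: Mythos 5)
Your proposal is correct and follows essentially the same route as the paper's proof: induction on $\dep(\fs)+\dep(\fn)$, splitting $\Supp(\fs*^{\bullet}\fn)$ into the three recursive pieces plus $D^{\zeta}_{\fs,\fn}$, and handling the latter by a double application of the induction hypothesis (first to $(\fs_{-},\fn_{-})$, then to the pair $((j),\ft)$, whose depth-sum you correctly verify is smaller). The only cosmetic difference is that the paper invokes commutativity to assume $\dep(\fs)\geq\dep(\fn)$ and treats two subcases, whereas you carry the $\max$ through directly.
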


\begin{proof}
Put $r := \dep(\fs)$ and $\ell := \dep(\fn)$. We prove the inequalities by induction on $r + \ell$. Note that in the case of $r = 0$ or $\ell = 0$, namely, $\fs=\emptyset $ or $\fn=\emptyset$, the result follows from the definition of $*^{\bullet}$.

Suppose that $r\geq 1$ and  $\ell \geq 1$. Since the binary operation $*^{\bullet}$ is commutative,  without loss of generality we may assume that $r \geq \ell \geq 1$.
We first consider the case that $r > \ell$.
By the induction hypothesis, we have:
\begin{itemize}
\item
when $\fu \in \Supp([s_{1}, \fs_{-} *^{\bullet} \fn])$, then $1 + (r - 1) \leq \dep(\fu) \leq 1 + (r - 1) + \ell$;

\item
when $\fu \in \Supp([n_{1}, \fs *^{\bullet} \fn_{-}])$, then $1 + r \leq \dep(\fu) \leq 1 + r + (\ell - 1)$;

\item
when $\fu \in \Supp([s_{1} + n_{1}, \fs_{-} *^{\bullet} \fn_{-}])$, then $1 + (r - 1) \leq \dep(\fu) \leq 1 + (r - 1) + (\ell - 1)$;

\item
when $\fu \in \Supp(D^{\zeta}_{\fs, \fn})$, then there exist $j$, $\fu'$ and $\fu''$ such that $1 \leq j < s_{1} + n_{1}$, $\fu'' \in \Supp(\fs_{-} *^{\zeta} \fn_{-})$ and $\fu' \in \Supp((j) *^{\zeta} \fu'')$ with $\fu = (s_{1} + n_{1} - j, \fu')$. It follows that $r - 1 \leq \dep(\fu'') \leq (r - 1) + (\ell - 1)$, and hence $r - 1 \leq \dep(\fu') \leq 1 + (r - 1) + (\ell - 1)$.
\end{itemize}
In any case, we have $r \leq \dep(\fu) \leq r + \ell$.
In the case of $r = \ell$, we can verify that $r \leq \dep(\fu) \leq 2 r$ by a similar argument as above.
\end{proof}

\begin{corollary} \label{cor-depth}
Let $\fs, \fn \in \cI$ be indices.
\begin{enumerate}
\item
Suppose that both $\fs$ and $\fn$ are non-empty indices. Let $D^{\zeta}_{\fs, \fn}$ be defined in~\eqref{E:DZetaS}. Then for each $\fu \in \Supp(D^{\zeta}_{\fs, \fn})$, we have
\begin{align*}
\max\{ \dep(\fs), \dep(\fn) \} \leq \dep(\fu) \leq \dep(\fs) + \dep(\fn).
\end{align*}

\item
For any integer $m \geq 0$, we let $\alpha^{\bullet,m}_{q}$ be the $m$-th iteration of $\alpha^{\bullet}_{q}$ defined in~\eqref{E:alpha}. Then for each $\fu \in \Supp(\alpha^{\bullet, m}_{q}(\fs))$, we have
\begin{align*}
\dep(\fu) \geq
\left\{ \begin{array}{@{}ll}
1 + m & (\fs = \emptyset, \ m \geq 1) \\
\dep(\fs) + m & (\textrm{otherwise})
\end{array} \right.
\ \ \textrm{and} \ \
\dep(\fu) \leq \dep(\fs) + 2 m.
\end{align*}

\end{enumerate}
\end{corollary}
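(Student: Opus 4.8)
The plan is to deduce both assertions from Proposition~\ref{prop-product-depth} by unwinding the relevant definitions, with part~(2) additionally requiring an induction on $m$.

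For~(1), I would first read off the shape of a generic element of $\Supp(D^{\zeta}_{\fs, \fn})$ from \eqref{E:DZetaS}: any such $\fu$ equals $(s_{1} + n_{1} - j, \fu')$ for some $1 \leq j < s_{1} + n_{1}$ and some $\fu' \in \Supp((j) *^{\zeta} (\fs_{-} *^{\zeta} \fn_{-}))$. Writing $r := \dep(\fs)$ and $\ell := \dep(\fn)$, apply Proposition~\ref{prop-product-depth} twice — first to $\fs_{-} *^{\zeta} \fn_{-}$, whose factors have depths $r - 1$ and $\ell - 1$, and then to $(j) *^{\zeta} (-)$ — to get $\max\{ 1, r - 1, \ell - 1 \} \leq \dep(\fu') \leq r + \ell - 1$. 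Adding $1$ for the leading part $s_{1} + n_{1} - j$ gives $\max\{ 2, r, \ell \} \leq \dep(\fu) \leq r + \ell$, which contains the claimed bounds since $\max\{ 2, r, \ell \} \geq \max\{ r, \ell \}$. The only case needing a separate glance is $r = \ell = 1$, where $\fs_{-} *^{\zeta} \fn_{-} = \emptyset$ and $\fu' = (j)$; one checks directly that $\dep(\fu) = 2$, still consistent.

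For~(2), induct on $m$. The base case $m = 0$ is immediate, since $\alpha^{\bullet, 0}_{q}$ is the identity and the ``otherwise'' branch of the lower bound covers $\fs = \emptyset$ as well. For the step, write $\alpha^{\bullet, m + 1}_{q}(\fs) = [1, (q - 1) *^{\bullet} \alpha^{\bullet, m}_{q}(\fs)]$ via \eqref{E:alpha}, so that any $\fu \in \Supp(\alpha^{\bullet, m + 1}_{q}(\fs))$ has the form $\fu = (1, \fu')$ with $\fu' \in \Supp((q - 1) *^{\bullet} \fu'')$ for some $\fu'' \in \Supp(\alpha^{\bullet, m}_{q}(\fs))$. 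Proposition~\ref{prop-product-depth} gives $\max\{ 1, \dep(\fu'') \} \leq \dep(\fu') \leq 1 + \dep(\fu'')$, hence $1 + \max\{ 1, \dep(\fu'') \} \leq \dep(\fu) \leq 2 + \dep(\fu'')$. Substituting the inductive bounds on $\dep(\fu'')$ and splitting into the cases $\fs = \emptyset$ and $\fs \neq \emptyset$ — in the former further separating $m = 0$, where $\fu'' = \emptyset$ forces $\dep(\fu) = 2$, from $m \geq 1$, where $\dep(\fu'') \geq 1 + m \geq 2$ — yields $\dep(\fu) \geq 1 + (m + 1)$ when $\fs = \emptyset$ and $\dep(\fu) \geq \dep(\fs) + (m + 1)$ otherwise, while the upper bound $\dep(\fu) \leq 2 + \dep(\fu'') \leq \dep(\fs) + 2(m + 1)$ follows at once.

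The computation is mechanical once Proposition~\ref{prop-product-depth} is in hand; the only place where care is genuinely needed — and hence the main obstacle — is the case bookkeeping in~(2), where the lower bound has two different shapes according to whether $\fs$ is empty, and one must verify both that the base of the induction lands in the correct branch and that the first iterate already enforces the jump to depth $\geq 1 + m$. Keeping the nested maxima straight is essentially the whole content.
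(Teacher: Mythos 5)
Your proposal is correct and follows exactly the route the paper intends: the paper dismisses this corollary as "a direct consequence of Proposition~\ref{prop-product-depth}," and your argument simply spells out that consequence by unwinding \eqref{E:DZetaS} and \eqref{E:alpha} and applying the proposition (twice for (1), inductively for (2)), with the case analysis for the lower bound in (2) handled correctly.
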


\begin{proof}
This is a direct consequence of Proposition \ref{prop-product-depth}.
\end{proof}

\subsection{Binary relations}

Let $\bullet \in \{ \Li, \zeta \}$.
For each $d \in \ZZ$ and $R = (P, Q) \in \cH^{\oplus 2}$, we define
\begin{align*}
\sLB(R) := \sLB(P) + \sLB(Q)
\ \ \textrm{and} \ \
\sLB_{d}(R) := \sLB_{d}(P) + \sLB_{d + 1}(Q).
\end{align*}
We set
\begin{align}\label{E:Pbullet}
\cP^{\bullet} &:= \{ (P, Q)\in \cH^{\oplus 2} \ | \ \sLB_{d}(P) + \sLB_{d + 1}(Q) = 0 \ \textrm{for all} \ d \in \ZZ \}
\end{align}
and
\begin{equation}\label{E:cPw}
\cP^{\bullet}_{w} := \cP^{\bullet} \cap \cH_{w}^{\oplus 2}
\end{equation}
for $w \geq 0$.
Elements in $\cP^{\bullet}$ are called \textit{binary relations}.
We note that each binary relation $R = (P, Q) \in \cP^{\bullet}$ induces a $k$-linear relation $\sLB(R) = \sLB(P) + \sLB(Q) = 0$.
Indeed, we have
\begin{align*}
\sLB(P) + \sLB(Q) = \sum_{d \in \ZZ} \sLB_{d}(P) + \sum_{d \in \ZZ} \sLB_{d + 1}(Q) = \sum_{d \in \ZZ} (\sLB_{d}(P) + \sLB_{d + 1}(Q)) = 0.
\end{align*} The above ideas were introduced by Todd~\cite{To18}.
\begin{example}
Thakur proved the identity \cite[Thm.~5]{T09b}
\begin{align*}
\sLZ_{d}(q) - L_{1} \sLZ_{d + 1}(1, q - 1) &= 0 \ \ \ (d \in \ZZ).
\end{align*}
Note that by Remark \ref{rmk-C=A} we can replace `$\zeta$' by `$\Li$' in the above equation. It follows that  for $\bullet\in \left\{\Li, \zeta \right\}$, we have a binary relation
\begin{equation}\label{E:R1}
R_{1} := ([q], \ - L_{1} [1, q - 1]) \in \cP^{\bullet}_{q};
\end{equation}
namely for each $d\in \ZZ$, the following equation holds:
\[ \sLB_{d}(q) - L_{1} \sLB_{d + 1}(1, q - 1) = 0.\]
\end{example}

\subsection{Maps between relations}\label{Sec:Maps B,C,BC}
In what follows, the essential ideas we use are rooted in~\cite{To18, ND21}. Let $\fs = (s_{1}, \ldots, s_{r}) \in \cI_{> 0}$ be a non-empty index, we recall that $\fs_{+} := (s_{1}, \ldots, s_{r - 1})$ and $\fs_{-} := (s_{2}, \ldots, s_{r})$, and the operator $\boxplus: \cH^{\oplus 2}\rightarrow \cH$ is defined in Definition~\ref{Def:boxplus}. For $\bullet\in \left\{\Li,\zeta \right\}$, we define the maps $\sB^{\bullet}_{\fs}, \sC^{\bullet}_{\fs}:\cH_{> 0}^{\oplus 2}\rightarrow \cH_{> 0}^{\oplus 2} $ as follows: For each $R = (P, Q) = (\sum_{\fn \in \cI_{> 0}} a_{\fn} [\fn], \ \sum_{\fn \in \cI_{> 0}} b_{\fn} [\fn]) \in \cH_{> 0}^{\oplus 2}$, we set
\begin{align*}
\sB^{\bullet}_{\fs}(R) &:= \left( [\fs, P] + [\fs, Q] + (\fs \boxplus Q) + [\fs_{+}, D^{\bullet}_{s_{r}, Q}], \ 0 \right), \\
\sC^{\bullet}_{\fs}(R) &:= \sum_{\fn = (n_{1}, \fn_{-}) \in \cI_{> 0}} \left( a_{\fn} [n_{1} + s_{1}, \fn_{-} *^{\bullet} \fs_{-}] + a_{\fn} [n_{1}, \fn_{-} *^{\bullet} \fs] + a_{\fn} D^{\bullet}_{\fn, \fs}, \ b_{\fn} [n_{1}, \fn_{-} *^{\bullet} \fs] \right), 
\end{align*}
where  \[D^{\bullet}_{s, Q} := \sum_{\fn \in \cI_{> 0}} b_{\fn} D^{\bullet}_{s, \fn}.\] For any integer $m\geq 0$, we further define the map $\sBC^{\bullet, m}_{q}: \cH_{> 0}^{\oplus 2}\rightarrow \cH_{> 0}^{\oplus 2}$ given by:
\[
\sBC^{\bullet, m}_{q}(R) := \left( [q^{\{m\}}, P], \ L_{1}^{m} \alpha^{\bullet, m}_{q}(Q) \right).
\]
It is clear that these maps are $k$-linear endomorphisms on $\cH_{> 0}^{\oplus 2}$.

\begin{proposition} \label{prop-BC}
Let $\bullet\in \left\{\Li,\zeta \right\}$.
For each  $\fs \in \cI_{> 0}$, and integers $m, w$ satisfying $m \geq 0, w > 0$, the maps $\sB^{\bullet}_{\fs}$, $\sC^{\bullet}_{\fs}$ and $\sBC^{\bullet, m}_{q}$ satisfy
\begin{align*}
\sB^{\bullet}_{\fs}(\cP^{\bullet}_{w}) \subset \cP^{\bullet}_{w + \wt(\fs)},
\ \ \
\sC^{\bullet}_{\fs}(\cP^{\bullet}_{w}) \subset \cP^{\bullet}_{w + \wt(\fs)}
\ \ \ \textrm{and} \ \ \
\sBC^{\bullet, m}_{q}(\cP^{\bullet}_{w}) \subset \cP^{\bullet}_{w + m q},
\end{align*}
where $\cP^{\bullet}_w$ is defined in~\eqref{E:cPw}.

\end{proposition}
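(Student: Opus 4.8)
The statement to prove is that the three maps $\sB^{\bullet}_{\fs}$, $\sC^{\bullet}_{\fs}$ and $\sBC^{\bullet,m}_{q}$ send binary relations to binary relations, with the expected shift in weight. The weight-grading part is immediate: by Definition~\ref{Def:multi-linear[,]}, the operator $\boxplus$, the definition of $D^{\bullet}_{s,\fn}$ (which lies in $\cH_{s+\wt(\fn)}$ when $\fn\in\cI_{>0}$), and the fact that $*^{\bullet}$ preserves total weight, each summand in the three formulas lies in the claimed weight component; combined with $k$-linearity this gives $\sB^{\bullet}_{\fs}(\cH_w^{\oplus 2})\subset\cH_{w+\wt(\fs)}^{\oplus 2}$, and similarly for the others. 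So the real content is that the condition defining $\cP^{\bullet}$—namely $\sLB_{d}(P)+\sLB_{d+1}(Q)=0$ for every $d\in\ZZ$—is preserved. The strategy for each of the three maps is the same: take $R=(P,Q)\in\cP^{\bullet}_w$, compute $\sLB_{d}$ of the first component and $\sLB_{d+1}$ of the second component of the image using the graded multiplicativity from Proposition~\ref{P:product of sLbullet}(2),(3) and the telescoping identity $\sLB_d(s)\,\sLB_{<d}(P)=\sLB_d([s,P])$, and show the sum collapses to a multiple of $\sLB_{d'}(P)+\sLB_{d'+1}(Q)$ for appropriate $d'$, which vanishes by hypothesis.

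\textbf{The map $\sC^{\bullet}_{\fs}$.} First I would treat $\sC^{\bullet}_{\fs}$ since it is the cleanest. By $k$-bilinearity it suffices to check $R=([\fn],[\fm])$, and indeed by linearity in each component separately one reduces to analysing the first and second components independently. For the second component, $\sLB_{d+1}$ of $\sum_{\fn} b_{\fn}[n_1,\fn_-*^{\bullet}\fs]$: using $\sLB_{d+1}([n_1,\fn_-*^{\bullet}\fs])=\sLB_{d+1}(n_1)\,\sLB_{<d+1}(\fn_-*^{\bullet}\fs)=\sLB_{d+1}(n_1)\,\sLB_{<d+1}(\fn_-)\,\sLB_{<d+1}(\fs)$ by Proposition~\ref{P:product of sLbullet}(2), this equals $\sLB_{d+1}(\fn)\cdot\sLB_{<d+1}(\fs)$. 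For the first component, the three-term expansion $a_{\fn}\big([n_1+s_1,\fn_-*^{\bullet}\fs_-]+[n_1,\fn_-*^{\bullet}\fs]+D^{\bullet}_{\fn,\fs}\big)$ is exactly $a_{\fn}$ times the defining recursion $[\fn]*^{\bullet}[\fs]-[s_1,\fn*^{\bullet}\fs_-]$, so applying $\sLB_d$ and using part (3) of Proposition~\ref{P:product of sLbullet} (in the form $\sLB_d(\fn)\sLB_d(\fs) = \sLB_d([s_1,\fn*^{\bullet}\fs_-])+\sLB_d([n_1+s_1,\fn_-*^{\bullet}\fn_-])+\sLB_d(D^{\bullet}_{\fn,\fs})+\sLB_d([n_1,\fn_-*^{\bullet}\fs])-\ldots$, rearranged) yields $\sLB_d(\fn)\,\sLB_d(\fs)$ minus a cross term that I will pair against the second-component contribution. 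Assembling: $\sLB_d(\text{1st comp.})+\sLB_{d+1}(\text{2nd comp.})=\big(\sLB_d(\fn)+\sLB_{d+1}(\fm)\big)\cdot(\text{something built from }\fs)$ — more precisely one finds a telescoping so that the whole expression is a $k_\infty$-multiple of $\sum_{d'}(\sLB_{d'}(P)+\sLB_{d'+1}(Q))$ restricted appropriately, which is $0$. The bookkeeping here is the classical ``stuffle/shuffle compatibility'' computation and is routine once organised by the level $d$.

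\textbf{The maps $\sB^{\bullet}_{\fs}$ and $\sBC^{\bullet,m}_{q}$; main obstacle.} For $\sBC^{\bullet,m}_{q}$ the key identity is that prepending $q^{\{m\}}$ shifts levels: $\sLB_d([q^{\{m\}},P])=\sLB_{<d}(\text{the top part})\cdots$ unwinds, using $\sLB_d(q)=\sLB_d([q])$ and iterating, to relate $\sLB_d([q^{\{m\}},P])$ to $\sLB_{d-m}(P)$ up to the factor $\prod$ of the relevant $S^{\bullet}_{\bullet}(q)$'s; meanwhile the second component $L_1^m\alpha^{\bullet,m}_q(Q)=L_1^m[1,(q-1)*^{\bullet}[1,(q-1)*^{\bullet}\cdots]]$ is designed (via Thakur's relation $R_1$ of \eqref{E:R1}) so that $\sLB_{d+1}$ of it matches $-\sLB_{d-m}(P)$ times the same factor, the single-level case being exactly the statement that $R_1\in\cP^{\bullet}_q$. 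So the proof is an induction on $m$ bootstrapping off $R_1$. For $\sB^{\bullet}_{\fs}$, write $\fs=(\fs_+,s_r)$; the terms $[\fs,P]+[\fs,Q]$, the ``merge'' term $\fs\boxplus Q$, and the correction $[\fs_+,D^{\bullet}_{s_r,Q}]$ together realise, at level $d$, the identity $\sLB_d(\fs_+)\cdot\big(S^{\bullet}_{d'}(s_r)\sLB_{<d'}(P)+S^{\bullet}_{d'}(s_r)\sLB_{<d'}(Q)+S^{\bullet}_{d'}(s_r)\cdot(\text{shift of }Q)+\ldots\big)$ collapsing to $\sLB_d(\fs_+)$ times $\sum(\sLB(P)+\sLB(Q))$-type expression via Proposition~\ref{P:product of sLbullet}(3) applied to the pair $(s_r),\fn$ for each $\fn\in\Supp(Q)$. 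The main obstacle I anticipate is not conceptual but combinatorial: carefully matching the ``$D^{\bullet}$'' correction terms across the two sides so the cancellation is exact for \emph{every} $d$, and handling the depth-one boundary cases ($\fs_+=\emptyset$, $\fn_-=\emptyset$) where $\boxplus$ and $D^{\bullet}$ degenerate — this is where Remark~\ref{remark-D-vanish} and the conventions in Remark~\ref{Rem: [s,0]=0} must be invoked. Because all three verifications share the pattern ``expand by levels, apply Proposition~\ref{P:product of sLbullet}, telescope, invoke $R\in\cP^{\bullet}$,'' I would prove $\sC^{\bullet}_{\fs}$ in full detail and then indicate the analogous (longer) computations for $\sB^{\bullet}_{\fs}$ and $\sBC^{\bullet,m}_q$, the latter by induction on $m$ with base case $R_1$.
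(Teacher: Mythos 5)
Your plan is essentially the paper's proof: for each map one expands $\sLB_{d}$ of the image using Proposition~\ref{P:product of sLbullet}~(2),(3) together with $\sLB_{d}(s)\sLB_{<d}(P)=\sLB_{d}([s,P])$, and recognises the result as a multiple of the defining relations of $R\in\cP^{\bullet}$. For $\sC^{\bullet}_{\fs}$ the paper multiplies the level-$d$ relation by $\sLB_{<d+1}(\fs)$, which is exactly your computation; for $\sB^{\bullet}_{\fs}$ it multiplies the sum over $i<d$ of the relations by $\sLB_{d}(s)$ in depth one and then composes via $\sB^{\bullet}_{\fs}=\sB^{\bullet}_{s_{1}}\circ\cdots\circ\sB^{\bullet}_{s_{r}}$, i.e.\ your computation organised slightly differently. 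The only real divergence is $\sBC^{\bullet,m}_{q}$: the paper establishes the identity $\sBC^{\bullet}_{q}(R)=\sB^{\bullet}_{q}(R)-\sum_{\fn}b_{\fn}\sC^{\bullet}_{\fn}(R_{1})$ and so inherits closure from the two cases already proved, whereas you verify the $m=1$ case directly and iterate; both routes bootstrap off $R_{1}\in\cP^{\bullet}_{q}$. Your route does work, but be aware that your intermediate claim that $\sLB_{d}([q^{\{m\}},P])$ equals a single shifted level $\sLB_{d-m}(P)$ times a product of $S^{\bullet}(q)$'s is not literally correct: $\sLB_{d}([q,P])=S^{\bullet}_{d}(q)\,\sLB_{<d}(P)$ involves the full truncated sum, and the cancellation for $m=1$ comes from combining $R_{1}$ with the summed form $\sLB_{<d}(P)+\sLB_{<d+1}(Q)=0$ of the hypothesis rather than from a level shift.
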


\begin{proof}
When $R \in \cP^{\bullet}_{w}$, it is clear that
\begin{align*}
\sB^{\bullet}_{\fs}(R) \in \cH_{w + \wt(\fs)}^{\oplus 2}, \ \ \sC^{\bullet}_{\fs}(R) \in \cH_{w + \wt(\fn)}^{\oplus 2} \ \ \textrm{and} \ \ \sBC^{\bullet}_{\fs}(R) \in \cH_{w + \wt(\fs)}^{\oplus 2}.
\end{align*}
Thus it suffices to show that $\sB^{\bullet}_{\fs}(R), \sC^{\bullet}_{\fs}(R), \sBC^{\bullet, m}_{q}(R) \in \cP^{\bullet}$.

For each $R = (\sum_{\fn \in \cI_{w}} a_{\fn} [\fn], \ \sum_{\fn \in \cI_{w}} b_{\fn} [\fn]) \in \cP^{\bullet}_{w}$, the corresponding equalities
\begin{align*}
\sum_{\fn \in \cI_{w}} a_{\fn} \sLB_{i}(\fn) + \sum_{\fn \in \cI_{w}} b_{\fn} \sLB_{i + 1}(\fn) = 0 \ \ \ (i \in \ZZ)
\end{align*}
hold. Thus for each $s \geq 1$ and $d \in \ZZ$, we have
\begin{align*}
0 &= \sLB_{d}(s) \sum_{i < d} \left( \sum_{\fn \in \cI_{w}} a_{\fn} \sLB_{i}(\fn) + \sum_{\fn \in \cI_{w}} b_{\fn} \sLB_{i + 1}(\fn) \right) \\
&= \sum_{\fn \in \cI_{w}} a_{\fn} \sLB_{d}(s) \sLB_{< d}(\fn) + \sum_{\fn \in \cI_{w}} b_{\fn} \sLB_{d}(s) \sLB_{< d}(\fn) + \sum_{\fn \in \cI_{w}} b_{\fn} \sLB_{d}(s) \sLB_{d}(\fn) \\
&= \sum_{\fn \in \cI_{w}} a_{\fn} \sLB_{d}(s, \fn) + \sum_{\fn \in \cI_{w}} b_{\fn} \sLB_{d}(s, \fn) + \sum_{\fn = (n_{1}, \fn_{-}) \in \cI_{w}} b_{\fn} \sLB_{d}(s + n_{1}, \fn_{-}) + \sum_{\fn \in \cI_{w}} b_{\fn} \sLB_{d}(D^{\bullet}_{s, \fn}) \\
&= \sLB_{d}(\sB^{\bullet}_{s}(R)),
\end{align*}
where the third equality comes from Proposition~\ref{P:product of sLbullet}. This means that $\sB^{\bullet}_{s}(R) \in \cP^{\bullet}$. Note that for each $\fs = (s_{1}, \ldots, s_{r}) \in \cI_{> 0}$, we have $\sB^{\bullet}_{\fs}(R) \in \cP^{\bullet}$ since from the definition of $\sB^{\bullet}_{\fs}$ we have
\begin{align*}
\sB^{\bullet}_{\fs} = \sB^{\bullet}_{s_{1}} \circ \cdots \circ \sB^{\bullet}_{s_{r}}.
\end{align*}

Similarly, we have
\begin{align*}
0 &= \left( \sum_{\fn \in \cI_{w}} a_{\fn} \sLB_{d}(\fn) + \sum_{\fn \in \cI_{w}} b_{\fn} \sLB_{d + 1}(\fn) \right) \sLB_{< d + 1}(\fs) \\
&= \sum_{\fn \in \cI_{w}} a_{\fn} \sLB_{d}(\fn) \sLB_{d}(\fs) + \sum_{\fn = (n_{1}, \fn_{-}) \in \cI_{w}} a_{\fn} \sLB_{d}(n_{1}) \sLB_{< d}(\fn_{-}) \sLB_{< d}(\fs) \\
& \ \ \ + \sum_{\fn = (n_{1}, \fn_{-}) \in \cI_{w}} b_{\fn} \sLB_{d + 1}(n_{1}) \sLB_{< d + 1}(\fn_{-}) \sLB_{< d + 1}(\fs) \\
& = \sum_{\fn = (n_{1}, \fn_{-}) \in \cI_{w}} a_{\fn} \left( \sLB_{d}([n_{1} + s_{1}, \fn_{-} *^{\bullet} \fs_{-}]) + \sLB_{d}(D^{\bullet}_{\fn, \fs}) \right) + \sum_{\fn = (n_{1}, \fn_{-}) \in \cI_{w}} a_{\fn} \sLB_{d}([n_{1}, \fn_{-} *^{\bullet} \fs]) \\
& \ \ \ + \sum_{\fn = (n_{1}, \fn_{-}) \in \cI_{w}} b_{\fn} \sLB_{d + 1}([n_{1}, \fn_{-} *^{\bullet} \fs]) \\
& = \sLB_{d}(\sC^{\bullet}_{\fn}(R)),
\end{align*}
where the third equality comes from Proposition~\ref{P:product of sLbullet}. Thus, we have $\sC^{\bullet}_{\fs}(R) \in \cP^{\bullet}$.

Finally, we have
\begin{align*}
\cP^{\bullet} &\ni \sB^{\bullet}_{q}(R) - \sum_{\fn \in \cI_{w}} b_{\fn} \sC^{\bullet}_{\fn}(R_{1}) \\
&= \sum_{\fn = (n_{1}, \fn_{-}) \in \cI_{w}} \left( a_{\fn} [q, \fn] + b_{\fn} [q, \fn] + b_{\fn} [q + n_{1}, \fn_{-}] + b_{\fn} D^{\bullet}_{q, \fn}, \ 0 \right) \\
& \ \ \ \ \ - \sum_{\fn = (n_{1}, \fn_{-}) \in \cI_{w}} b_{\fn} \left( \left( [q + n_{1}, \fn_{-}] + [q, \fn] + D^{\bullet}_{q, \fn} \right), \ - L_{1} [1, (q - 1) *^{\bullet} \fn] \right) \\
&= \sum_{\fn \in \cI_{w}} \left( a_{\fn} [q, \fn], \ b_{\fn} L_{1} \alpha^{\bullet}_{q}(\fn) \right) \\
&= \sBC^{\bullet}_{q}(R),
\end{align*}
where the second equality comes from the definition of $\alpha^{\bullet}_{q}$ given in~\eqref{E:alpha}. Since
\begin{align*}
\sBC^{\bullet, m}_{q} = \sBC^{\bullet}_{q} \circ \cdots \circ \sBC^{\bullet}_{q} \ \ (\textrm{$m$th iterate of $\sBC^{\bullet}_{q}$}),
\end{align*}
it shows that $\sBC^{\bullet, m}_{q}(R) \in \cP^{\bullet}$.
\end{proof}

Let $\fs \in \cI$.
We write $\fs = (s_{1}, \ldots ) = (\fs^{\rT}, q^{\{ m \}}, \fs')$ with $\fs^{\rT} \in \IT$, $m \geq 0$ and $\fs' = (s'_{1}, \ldots)$ with $s'_{1} > q$ or $\fs' = \emptyset$, and set $\Init(\fs) := (\fs^{\rT}, q^{\{ m \}})$ and $\ell_{1} := \dep(\Init(\fs))$.
When $\fs' \neq \emptyset$, we set \[\fs'' := (s'_{1} - q, \fs'_{-}) = (s_{\ell_{1} + 1} - q, \fs'_{-}).\]
We recall $\sUB(\fs)$ given in Definition \ref{def-sU} by
\begin{align*}
\sUB(\fs) := \left\{ \begin{array}{@{}ll}
- [\fs^{\rT}, q^{\{m + 1\}}, \fs''] + L_{1}^{m + 1} [\fs^{\rT}, \alpha_{q}^{\bullet, m + 1}(\fs'')] & \\
\hspace{5.0em} + L_{1}^{m + 1} (\fs^{\rT} \boxplus \alpha_{q}^{\bullet, m + 1}(\fs'')) - [\fs^{\rT}, q^{\{m\}}, D^{\bullet}_{q, \fs''}] & (\fs' \neq \emptyset) \\[0.5em]
L_{1}^{m} [\fs^{\rT}, \alpha_{q}^{\bullet, m}(\emptyset)] + L_{1}^{m} (\fs^{\rT} \boxplus \alpha_{q}^{\bullet, m}(\emptyset)) & (\fs' = \emptyset)
\end{array} \right.
\end{align*}
It is clear that $\sUB(\fs) \in \cH_{\wt(\fs)}$.

\begin{theorem}[Theorem \ref{theorem-algo} (1)] \label{theorem-sLsU}
For each $P \in \cH$, we have $\sLB(\sUB(P)) = \sLB(P)$.
\end{theorem}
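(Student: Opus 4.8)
\textbf{Proof plan for Theorem~\ref{theorem-sLsU} ($=$ Theorem~\ref{theorem-algo}~(1)).}
The plan is to realize the identity $\sLB(\sUB(\fs))=\sLB(\fs)$ as a consequence of a single binary relation obtained by applying the maps $\sB^{\bullet}$, $\sC^{\bullet}$, $\sBC^{\bullet,m}_{q}$ to the base relation $R_{1}=([q],\,-L_{1}[1,q-1])\in\cP^{\bullet}_{q}$ from~\eqref{E:R1}. By $k$-linearity of $\sUB$ and $\sLB$ it suffices to treat a single index $\fs\in\cI$, and write $\fs=(\fs^{\rT},q^{\{m\}},\fs')$ as in~\eqref{E:s^T}. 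When $\fs\in\IT$ (i.e. $m=0$, $\fs'=\emptyset$), Remark~\ref{Rem:Us=s} gives $\sUB(\fs)=\fs$ and there is nothing to prove, so I would split into the two genuine cases $\fs'\neq\emptyset$ and $\fs'=\emptyset$ with $m\geq 1$.

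\textbf{Case $\fs'\neq\emptyset$.} First I would apply $\sBC^{\bullet,m}_{q}$ to $R_{1}$; by Proposition~\ref{prop-BC} this lands in $\cP^{\bullet}_{q+mq}$, and unwinding the definitions it equals $\big([q^{\{m+1\}}],\,-L_{1}^{m+1}\alpha^{\bullet,m+1}_{q}(q-1)\big)$ up to the bookkeeping in $\alpha^{\bullet,m}_{q}$. Next, since $\fs'=(s'_{1},\ldots)$ with $s'_{1}>q$, I would hit this with $\sC^{\bullet}_{\fs''}$ where $\fs''=(s'_{1}-q,\fs'_{-})$: the $q$ in the first slot combines with $s'_{1}-q$ via the $[n_{1}+s_{1},\ldots]$ term of $\sC^{\bullet}$ to produce $s'_{1}$, generating exactly the terms $[q^{\{m\}},\fs']$, $[q^{\{m+1\}},\fs'']$ and the defect term $[q^{\{m\}},D^{\bullet}_{q,\fs''}]$, together with the $\alpha$-side contributions. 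Finally I would apply $\sB^{\bullet}_{\fs^{\rT}}$ (which by the proof of Proposition~\ref{prop-BC} is $\sB^{\bullet}_{s^{\rT}_{1}}\circ\cdots\circ\sB^{\bullet}_{s^{\rT}_{r}}$) to prepend $\fs^{\rT}$; the $\boxplus$ and $D^{\bullet}$ terms in the definition of $\sB^{\bullet}_{\fs}$ produce the $(\fs^{\rT}\boxplus\alpha^{\bullet,m+1}_{q}(\fs''))$ summand appearing in $\sUB(\fs)$. The resulting element lies in $\cP^{\bullet}_{\wt(\fs)}$ by Proposition~\ref{prop-BC}, so $\sLB$ of it vanishes; matching its two components against the definition of $\sUB(\fs)$ shows this says precisely $\sLB(\fs)-\sLB(\sUB(\fs))=0$. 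The case $\fs'=\emptyset$ with $m\geq 1$ is the same argument with the $\fs''$-terms and the $D^{\bullet}_{q,\fs''}$ defect absent, giving the shorter formula in Definition~\ref{def-sU}.

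\textbf{Key input and main obstacle.} The crucial algebraic fact used throughout is Proposition~\ref{P:product of sLbullet}, which guarantees that $\sB^{\bullet}$, $\sC^{\bullet}$ and $\sBC^{\bullet,m}_{q}$ preserve $\cP^{\bullet}$ (this is the content of Proposition~\ref{prop-BC}), together with the vanishing $D^{\Li}_{\fs,\fn}=0$ and Remark~\ref{remark-D-vanish} on when $D^{\zeta}_{\fs,\fn}=0$. I expect the main obstacle to be purely bookkeeping: correctly tracking the iterated operator $\alpha^{\bullet,m}_{q}(P)=[1,(q-1)*^{\bullet}P]$ through the compositions and verifying that the $L_{1}$-powers, the $\boxplus$-terms, and the $D^{\bullet}_{q,\fs''}$ defect terms assemble into exactly the four-term (resp.\ two-term) expression defining $\sUB(\fs)$ — rather than any conceptual difficulty. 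A secondary subtlety is ensuring that all intermediate expressions stay in $\cH_{>0}^{\oplus 2}$ (needed so that $\sB^{\bullet}$, $\sC^{\bullet}$, $\sBC^{\bullet,m}_{q}$ are defined), which follows from the depth inequalities of Proposition~\ref{prop-product-depth} and Corollary~\ref{cor-depth}. Because these computations are lengthy, I would organize the appendix around explicitly displaying $\sDB(R_{1})$ for the relevant composite $\sDB$ (as in the displays~\eqref{E:Appendix EQ1}--\eqref{E:Appendix EQ4} referenced in the outline) and then reading off both coordinates.
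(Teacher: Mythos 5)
Your overall strategy is the paper's: produce a single binary relation in $\cP^{\bullet}_{\wt(\fs)}$ from $R_{1}$ by composing $\sB^{\bullet}$, $\sC^{\bullet}$, $\sBC^{\bullet,m}_{q}$, invoke Proposition~\ref{prop-BC}, and read off $\sLB(\fs)-\sLB(\sUB(\fs))=0$. But in the case $\fs'\neq\emptyset$ you compose the maps in the wrong order, and the step where you identify the output with $\sUB(\fs)$ would fail. You apply $\sBC^{\bullet,m}_{q}$ first and then $\sC^{\bullet}_{\fs''}$; the paper applies $\sC^{\bullet}_{\fs''}$ to $R_{1}$ first, then $\sBC^{\bullet,m}_{q}$, then $\sB^{\bullet}_{\fs^{\rT}}$. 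The order matters because $\sC^{\bullet}_{\fs''}$ does not append $\fs''$ after the index: it adds $s''_{1}$ to the \emph{first} entry and merges the rest via $*^{\bullet}$. With your order the first component of $\sBC^{\bullet,m}_{q}(R_{1})$ is $[q^{\{m+1\}}]$, of depth $m+1$, so in $\sC^{\bullet}_{\fs''}$ one has $\fn=q^{\{m+1\}}$, $n_{1}=q$, $\fn_{-}=q^{\{m\}}$, and the first component of $\sC^{\bullet}_{\fs''}(\sBC^{\bullet,m}_{q}(R_{1}))$ is
\[
[s'_{1},\, q^{\{m\}}*^{\bullet}\fs'_{-}] \;+\; [q,\, q^{\{m\}}*^{\bullet}\fs''] \;+\; D^{\bullet}_{q^{\{m+1\}},\,\fs''},
\]
not the claimed $[q^{\{m\}},\fs']+[q^{\{m+1\}},\fs'']+[q^{\{m\}},D^{\bullet}_{q,\fs''}]$: the prefix $q^{\{m\}}$ gets absorbed into the $*^{\bullet}$-products and expands into many extraneous indices, and $[\fs]$ itself never appears (for $m\geq 1$). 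The element you obtain is still a legitimate member of $\cP^{\bullet}_{\wt(\fs)}$, so it does yield a valid linear relation, but it is a \emph{different} relation, and the concluding match against Definition~\ref{def-sU} does not go through.

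The repair is exactly the paper's ordering. Applying $\sC^{\bullet}_{\fs''}$ to $R_{1}$ first exploits that the first component of $R_{1}$ is $[q]$ of depth one, so $\fn_{-}=\emptyset$ and all $*^{\bullet}$-products collapse, giving on the nose $\sC^{\bullet}_{\fs''}(R_{1})=\bigl([\fs']+[q,\fs'']+D^{\bullet}_{q,\fs''},\ -L_{1}\alpha^{\bullet}_{q}(\fs'')\bigr)$; then $\sBC^{\bullet,m}_{q}$ merely prepends $q^{\{m\}}$ to the first component and iterates $\alpha^{\bullet}_{q}$, and $\sB^{\bullet}_{\fs^{\rT}}$ contributes the $\boxplus$-term. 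Two smaller slips in the same direction: the second component of $\sBC^{\bullet,m}_{q}(R_{1})$ is $-L_{1}^{m+1}\alpha^{\bullet,m+1}_{q}(\emptyset)$, not $-L_{1}^{m+1}\alpha^{\bullet,m+1}_{q}(q-1)$ (these differ, since $\alpha^{\bullet}_{q}(\emptyset)=[1,q-1]$ while $\alpha^{\bullet}_{q}(q-1)=[1,(q-1)*^{\bullet}(q-1)]$); and in the case $\fs'=\emptyset$ one must use $\sBC^{\bullet,m-1}_{q}(R_{1})$, not $\sBC^{\bullet,m}_{q}(R_{1})$, so as to produce $[q^{\{m\}}]=[\fs^{\rT}]^{-1}[\fs]$ rather than $[q^{\{m+1\}}]$ in the first slot.
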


\begin{proof}
We may assume that $P = \fs \in \cI$ as above.
By the definition of $\sU^{\bullet}$, it suffices to show the desired result in the case $\fs \notin \IT$.
We first consider the case that $\fs' \neq \emptyset$. Recall that $R_1$ is given in~\eqref{E:R1} and $\alpha^{\bullet}_{q}$ is defined in~\eqref{E:alpha}. Note that by definition, we have
\begin{align*}
\sBC^{\bullet, m}_{q}\left( (\sC^{\bullet}_{\fs''}(R_{1}))\right)
&= \sBC^{\bullet, m}_{q} \left( [\fs'] + [q, \fs''] + D^{\bullet}_{q, \fs''}, \ - L_{1} \alpha^{\bullet}_{q}(\fs'') \right) \\
&= \left( [q^{\{m\}}, \fs'] + [q^{\{m + 1\}}, \fs''] + [q^{\{m\}}, D^{\bullet}_{q, \fs''}], \ - L_{1}^{m + 1} \alpha^{\bullet, m + 1}_{q}(\fs'') \right).
\end{align*}
If $\fs^{\rT} = \emptyset$, we can express
\begin{equation}\label{E:Appendix EQ1}
\sBC^{\bullet, m}_{q}(\sC^{\bullet}_{\fs''}(R_{1}))
= \left( [\fs] + [\fs^{\rT}, q^{\{m + 1\}}, \fs''] + [\fs^{\rT}, q^{\{m\}}, D^{\bullet}_{q, \fs''}], \ - L_{1}^{m + 1} [\fs^{\rT}, \alpha^{\bullet, m + 1}_{q}(\fs'')] \right).
\end{equation}
If $\fs^{\rT} \neq \emptyset$,   we have

\begin{align}\label{E:Appendix EQ2}
&\sB^{\bullet}_{\fs^{\rT}}(\sBC^{\bullet, m}_{q}(\sC^{\bullet}_{\fs''}(R_{1}))) \\
&= \left( [\fs] + [\fs^{\rT}, q^{\{m + 1\}}, \fs''] + [\fs^{\rT}, q^{\{m\}}, D^{\bullet}_{q, \fs''}] - L_{1}^{m + 1} [\fs^{\rT}, \alpha^{\bullet, m + 1}_{q}(\fs'')] - L_{1}^{m + 1} [\fs^{\rT} \boxplus \alpha^{\bullet, m + 1}_{q}(\fs'')], \ 0 \right), \nonumber
\end{align}
where we use Remark~\ref{remark-D-vanish} and the fact $[\fs^{\rT}_{+},0]=0\in \cH$ from Remark~\ref{Rem: [s,0]=0}.

Next we suppose that $\fs' = \emptyset$.
Then $m \geq 1$ and we have
\begin{align*}
\sBC^{\bullet, m - 1}_{q}(R_{1})
= \left( [q^{\{m\}}], \ L_{1}^{m - 1} \alpha^{\bullet, m - 1}_{q}(- L_{1} [1, q - 1]) \right)
= \left( [q^{\{m\}}], \ - L_{1}^{m} \alpha^{\bullet, m}_{q}(\emptyset) \right).
\end{align*}
If $\fs^{\rT} = \emptyset$ then we have

\begin{align}\label{E:Appendix EQ3}
\sBC^{\bullet, m - 1}_{q}(R_{1})
= \left( [\fs], \ - L_{1}^{m} [\fs^{\rT}, \alpha^{\bullet, m}_{q}(\emptyset)] \right),
\end{align}
and if $\fs^{\rT} \neq \emptyset$ then by Remark~\ref{remark-D-vanish} and  $[\fs^{\rT}_{+},0]=0\in \cH$, we have

\begin{align}\label{E:Appendix EQ4}
\sB^{\bullet}_{\fs^{\rT}}(\sBC^{\bullet, m - 1}_{q}(R_{1}))
= \left( [\fs] - L_{1}^{m} [\fs^{\rT}, \alpha^{\bullet, m}_{q}(\emptyset)] - L_{1}^{m} (\fs^{\rT} \boxplus \alpha^{\bullet, m}_{q}(\emptyset)), \ 0 \right).
\end{align}

Recall from~\eqref{E:R1} that $R_{1}\in \cP_{q}^{\bullet}$. It follows by Theorem~\ref{prop-BC}  that the left hand side (LHS) of each of the equations~\eqref{E:Appendix EQ1}, \eqref{E:Appendix EQ2}, \eqref{E:Appendix EQ3} and \eqref{E:Appendix EQ4} belongs to $\cP_{\wt(\fs)}^{\bullet}$. Thus, in any case of the four equations above we have that
\begin{align*}
0 = \sLB(LHS) = \sLB(RHS) = \sLB(\fs) - \sLB(\sUB(\fs)),
\end{align*}
whence we obtain
\[ \sLB(\fs)=\sLB(\sUB(\fs)) .\]
\end{proof}

We set
\begin{align*}
\sUB_{1}(\fs) &:= \left\{ \begin{array}{ll} - [\fs^{\rT}, q^{\{m + 1\}}, \fs''] + L_{1}^{m + 1} [\fs^{\rT}, \alpha_{q}^{\bullet, m + 1}(\fs'')] & (\fs' \neq \emptyset) \\ L_{1}^{m} [\fs^{\rT}, \alpha_{q}^{\bullet, m}(\emptyset)] & (\fs' = \emptyset) \end{array} \right., \\
\sUB_{2}(\fs) &:= \left\{ \begin{array}{ll} L_{1}^{m + 1} (\fs^{\rT} \boxplus \alpha_{q}^{\bullet, m + 1}(\fs'')) & (\fs' \neq \emptyset) \\ L_{1}^{m} (\fs^{\rT} \boxplus \alpha_{q}^{\bullet, m}(\emptyset)) & (\fs' = \emptyset) \end{array} \right., \\
\sUB_{3}(\fs) &:= \left\{ \begin{array}{ll} - [\fs^{\rT}, q^{\{m\}}, D^{\bullet}_{q, \fs''}] & (\fs' \neq \emptyset) \\ 0 & (\fs' = \emptyset) \end{array} \right..
\end{align*}
It is clear that $\sUB_{i}(\fs) \in \cH_{\wt(\fs)}$ for $1 \leq i \leq 3$ and $\sUB(\fs) = \sUB_{1}(\fs) + \sUB_{2}(\fs) + \sUB_{3}(\fs)$.

\begin{lemma} \label{lemma-123}
Let $\fs \in \cI \setminus \IT$.
Then the following hold:
\begin{enumerate}
\item If $\fn \in \Supp(\sUB_{1}(\fs))$, then $\dep(\fn) > \dep(\fs)$.
\item If $\fn \in \Supp(\sUB_{2}(\fs))$, then $\dep(\fn) \geq \dep(\fs)$ and $\Init(\fn) > \Init(\fs)$ (using lexicographical order).
\item If $\fn \in \Supp(\sUB_{3}(\fs))$, then $\dep(\fn) \geq \dep(\fs) > \ell_{1} := \dep(\Init(\fs))$, \ $\Init(\fn) \geq \Init(\fs)$ and $1 \leq n_{\ell_{1} + 1} < s_{\ell_{1} + 1}$.
\end{enumerate}
\end{lemma}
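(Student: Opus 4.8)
The plan is to prove Lemma~\ref{lemma-123} by unwinding the definitions of $\sUB_{1}$, $\sUB_{2}$, $\sUB_{3}$ and combining them with the depth inequalities for $*^{\bullet}$, $D^{\bullet}_{\fs,\fn}$ and $\alpha^{\bullet,m}_{q}$ established in Proposition~\ref{prop-product-depth} and Corollary~\ref{cor-depth}. Throughout we fix $\fs \in \cI \setminus \IT$ and write $\fs = (\fs^{\rT}, q^{\{m\}}, \fs')$ as in~\eqref{E:s^T}, so $\ell_{1} = \dep(\fs^{\rT}) + m$, and $\fs'' = (s'_{1}-q, \fs'_{-})$ when $\fs' \neq \emptyset$; note $\dep(\fs) = \ell_{1} + \dep(\fs')$ and $\dep(\fs'') = \dep(\fs')$ when $\fs'\neq\emptyset$. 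The basic tool in all three parts is that for a product $[\fn_{0}, \fm_{1} *^{\bullet} \fm_{2}]$ or $\fm_{1} \boxplus \fm_{2}$, the depth of any support term is controlled via Proposition~\ref{prop-product-depth}, together with the observation that prepending a fixed block $[\fs^{\rT}, q^{\{m\}}, -]$ shifts every depth up by exactly $\ell_{1}$ and fixes the initial segment $\Init(\fs) = (\fs^{\rT}, q^{\{m\}})$ of the resulting index.

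For part (1): when $\fs' \neq \emptyset$, $\sUB_{1}(\fs)$ is a combination of $[\fs^{\rT}, q^{\{m+1\}}, \fs'']$, which has depth $\ell_{1} + 1 + \dep(\fs'') = \dep(\fs) + 1 > \dep(\fs)$, and $[\fs^{\rT}, \alpha^{\bullet,m+1}_{q}(\fs'')]$; for the latter, Corollary~\ref{cor-depth}(2) gives $\dep(\fu) \geq \dep(\fs'') + (m+1)$ for each $\fu \in \Supp(\alpha^{\bullet,m+1}_{q}(\fs''))$ (since $\fs'' \neq \emptyset$), hence the support terms have depth $\geq \dep(\fs^{\rT}) + \dep(\fs'') + m + 1 = \dep(\fs) + 1 > \dep(\fs)$. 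When $\fs' = \emptyset$ (so $m \geq 1$ since $\fs \notin \IT$), $\sUB_{1}(\fs) = L_{1}^{m}[\fs^{\rT}, \alpha^{\bullet,m}_{q}(\emptyset)]$ and Corollary~\ref{cor-depth}(2) gives $\dep(\fu) \geq 1 + m$ for $\fu \in \Supp(\alpha^{\bullet,m}_{q}(\emptyset))$, so the support terms have depth $\geq \dep(\fs^{\rT}) + m + 1 = \dep(\fs) + 1 > \dep(\fs)$. For part (2): $\sUB_{2}(\fs) = L_{1}^{m+1}(\fs^{\rT} \boxplus \alpha^{\bullet,m+1}_{q}(\fs''))$ or $L_{1}^{m}(\fs^{\rT} \boxplus \alpha^{\bullet,m}_{q}(\emptyset))$. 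By Definition~\ref{Def:boxplus}, $\fs^{\rT} \boxplus \fm = (\fs^{\rT}_{+}, s^{\rT}_{\dep \fs^{\rT}} + n_{1}, \fm_{-})$ where $\fm = (n_{1}, \fm_{-})$, so the depth of the result is $\dep(\fs^{\rT}) + \dep(\fm) - 1$; applying the depth lower bounds from Corollary~\ref{cor-depth}(2) as above yields $\dep(\fn) \geq \dep(\fs)$. The initial segment claim follows because $\boxplus$ strictly increases the last entry of $\fs^{\rT}$ (adding $n_{1} \geq 1$), so when comparing $\Init(\fn)$ with $\Init(\fs) = (\fs^{\rT}, q^{\{m\}})$ lexicographically, the first $\dep(\fs^{\rT}) - 1$ entries agree, and at position $\dep(\fs^{\rT})$ we get $s^{\rT}_{\dep \fs^{\rT}} + n_{1} > s^{\rT}_{\dep\fs^{\rT}}$; here one must also check that this new entry still makes $\fn$'s initial-$\IT$-segment extend at least that far — which holds since $s^{\rT}_{\dep\fs^{\rT}} + n_{1}$ may exceed $q$, but then $\Init(\fn)$ is simply a strictly larger tuple, so lexicographically $\Init(\fn) > \Init(\fs)$ regardless.

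For part (3): this case only arises when $\fs' \neq \emptyset$, and $\sUB_{3}(\fs) = -[\fs^{\rT}, q^{\{m\}}, D^{\bullet}_{q, \fs''}]$. Since $\fs \notin \IT$ and $\fs' \neq \emptyset$, we have $s'_{1} > q$, so $s_{\ell_1+1} = s'_1 > q$, giving $\dep(\fs) = \ell_1 + \dep(\fs') > \ell_1$. By Corollary~\ref{cor-depth}(1) applied to $D^{\bullet}_{q,\fs''}$ (both $(q)$ and $\fs''$ nonempty), every $\fu \in \Supp(D^{\bullet}_{q,\fs''})$ has $\dep(\fu) \geq \max\{1, \dep(\fs'')\} = \dep(\fs')$, hence the prepended index $\fn = (\fs^{\rT}, q^{\{m\}}, \fu)$ has $\dep(\fn) \geq \ell_1 + \dep(\fs') = \dep(\fs) > \ell_1$, and $\Init(\fn) \geq \Init(\fs)$ since $\fn$ shares the initial block $(\fs^{\rT}, q^{\{m\}})$ with $\fs$ (with equality of initial segments unless $\fu$'s first entry is still $\leq q$, which would only make $\Init(\fn)$ longer, hence larger). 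Finally, from~\eqref{E:DZetaS}, the entry $n_{\ell_1+1}$ of $\fn$ is of the form $q + (s'_1 - q) - j = s'_1 - j = s_{\ell_1+1} - j$ with $1 \leq j < q + (s'_1 - q) = s'_1 = s_{\ell_1+1}$, so $1 \leq n_{\ell_1+1} < s_{\ell_1+1}$. The main obstacle I anticipate is the bookkeeping in part (2) and part (3) around lexicographic comparison of $\Init(\fn)$ and $\Init(\fs)$ when the modified entry at position $\dep(\fs^{\rT})$ or $\ell_1 + 1$ exceeds $q$, which changes where the maximal leading-$\IT$-segment of $\fn$ terminates; this needs a careful case split on whether $s^{\rT}_{\dep\fs^{\rT}} + n_1 \leq q$ (resp.\ whether the new entries stay $\leq q$), but in every case the leading tuple becomes lexicographically larger, so the inequality is preserved. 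All the depth estimates themselves are immediate consequences of the already-established Proposition~\ref{prop-product-depth} and Corollary~\ref{cor-depth}.
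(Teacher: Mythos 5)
Your depth estimates (all three parts) and your treatment of part (3) are sound and match the paper's argument, but your proof of part (2) has a genuine gap at exactly the point you flag as the "main obstacle." You treat the first entry $n_{1}$ of an index in $\Supp(\alpha_{q}^{\bullet,m'}(\cdot))$ as an arbitrary positive integer and then claim that even when $s^{\rT}_{\dep\fs^{\rT}}+n_{1}>q$ one still gets $\Init(\fn)>\Init(\fs)$ "regardless." That fallback is false: if the entry at position $j:=\dep(\fs^{\rT})$ of $\fn$ exceeded $q$, then $\Init(\fn)$ (the maximal leading segment with all entries $\leq q$) would terminate at position $j-1$ and would be a \emph{proper prefix} of $\Init(\fs)=(\fs^{\rT},q^{\{m\}})$, hence lexicographically \emph{smaller}, not larger. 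Were this case actually possible, the lemma — and with it the termination argument in the proof of Theorem~\ref{theorem-sUe} — would fail.

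The missing observation, which is the crux of the paper's proof of (2), is that $\alpha_{q}^{\bullet}(P)=[1,(q-1)*^{\bullet}P]$ prepends the entry $1$, so every index in $\Supp(\alpha_{q}^{\bullet,m'}(P))$ with $m'\geq 1$ has first entry exactly $1$; since the relevant exponent in $\sUB_{2}(\fs)$ is $m+1\geq 1$ (resp.\ $m\geq 1$ when $\fs'=\emptyset$ and $\fs\notin\IT$), one always has $n_{1}=1$. Because $\fs^{\rT}\in\IT$ forces $s^{\rT}_{\dep\fs^{\rT}}\leq q-1$, the new entry is $s^{\rT}_{\dep\fs^{\rT}}+1\leq q$, so $\Init(\fn)$ does extend through position $j$ and the strict lexicographic inequality follows from the comparison at that position. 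You should also dispose of the case $\fs^{\rT}=\emptyset$ in part (2) (there $\sUB_{2}(\fs)=\emptyset\boxplus(\cdots)=0$ by Definition~\ref{Def:boxplus}, so the claim is vacuous); your formula $\fs^{\rT}\boxplus\fm=(\fs^{\rT}_{+},s^{\rT}_{\dep\fs^{\rT}}+n_{1},\fm_{-})$ tacitly assumes $\fs^{\rT}\neq\emptyset$.
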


\begin{proof}
First we note that since $\fs \notin \IT$, if $\fs' = \emptyset$ then $m \geq 1$.
By Corollary \ref{cor-depth}, we have $\dep(\fn) \geq \dep(\fs)$ for all $1 \leq i \leq 3$ and $\fn \in \Supp(\sUB_{i}(\fs))$ and $\dep(\fn) > \dep(\fs)$ for all $\fn \in \Supp(\sUB_{1}(\fs))$.

By the definition of $\alpha_{q}^{\bullet}$, when $\fn \in \sUB_{2}(\fs)$ we can write $\fn = \fs^{\rT} \boxplus [1, \fn_{0}]$ for some $\fn_{0} \in \cI$. We may assume $\fs^{\rT}\neq \emptyset$ since if $\fs^{\rT}=\emptyset$, we have $\sUB_{2}(\fs)=\fs^{\rT}\boxplus \alpha_{q}^{\bullet, \ell}(\fs'')=0\in \cH$. Let $j:=\dep \fs^{\rT}$ and so $s_{j}<q$.
Since $s_{j} + 1 \leq q$, we have $\Init(\fn) = (s_{1}, \ldots, s_{j - 1}, s_{j} + 1, \ldots) > (\fs^{\rT}, q^{\{m\}}) = \Init(\fs)$.

Let $\fn = (n_{1}, \ldots) \in \Supp(\sUB_{3}(\fs))$.
We may assume that $\fs' \neq \emptyset$ and $\bullet = \zeta$. In paritcular, $\dep(\fs) > \ell_{1}$.  Note that any $\fn^{\dagger} \in \Supp(D^{\zeta}_{q, \fs''})$ can be written as $\fn^{\dagger} = (s_{\ell_{1} + 1} - i, \fn^{\dagger}_{-})$ for some $1 \leq i < s_{\ell_{1} + 1}$ and $\fn^{\dagger}_{-} \in \cI$. Then by the definition of $\sUB_3(\fs)$, we have 
$\fn = (\fs^{\rT}, q^{\{m\}}, \fn^{\dagger}) = (\fs^{\rT}, q^{\{m\}}, s_{\ell_{1} + 1} - i, \fn^{\dagger}_{-})$.
Thus $\Init(\fn) = (\fs^{\rT}, q^{\{m\}}, \ldots) \geq (\fs^{\rT}, q^{\{m\}}) = \Init(\fs)$ and $n_{\ell_{1} + 1} = s_{\ell_{1} + 1} - i < s_{\ell_{1} + 1}$.
\end{proof}

\begin{theorem}[Theorem \ref{theorem-algo} (2)] \label{theorem-sUe}
For each $P \in \cH$, there exists $e \geq 0$ such that $\Supp((\sU^{\bullet})^{e}(P)) \subset \IT$, where $(\sU^{\bullet})^{0}$ is defined to be the identity map and for $e\in\ZZ_{>0}$, $(\sU^{\bullet})^{e}$ is defined to be the $e$-th iteration of $\sU^{\bullet}$.
\end{theorem}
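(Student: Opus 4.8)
The plan is to deduce the statement for a general $P=\sum_{\fs}c_{\fs}[\fs]\in\cH$ from the special case of a single generator, and to handle the latter by induction on a well-founded complexity measure built from Lemma~\ref{lemma-123}. For the reduction, I would first record two consequences of Remark~\ref{Rem:Us=s}: $\sU^{\bullet}$ preserves the weight grading, and $\sU^{\bullet}$ fixes every element of $\cH$ whose support lies in $\IT$ (by $k$-linearity, since it fixes each $[\fs]$ with $\fs\in\IT$). Hence once the support of an iterated image lands in $\IT$, it stays in $\IT$ under all further iterations; and if for each $\fs\in\Supp(P)$ there is $e_{\fs}\geq0$ with $\Supp((\sU^{\bullet})^{e_{\fs}}([\fs]))\subset\IT$, then $e:=\max_{\fs\in\Supp(P)}e_{\fs}$ works for $P$. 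So it suffices to prove: for every index $\fs\in\cI$ there is $e\geq0$ with $\Supp((\sU^{\bullet})^{e}([\fs]))\subset\IT$, and we may fix the weight $w:=\wt(\fs)$ throughout, since all iterated images of $[\fs]$ stay in $\cH_{w}$.

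To run the induction I would introduce a measure on $\cI_{w}$. For $\fn\in\cI_{w}$ the tuple $\Init(\fn)=(\fn^{\rT},q^{\{m\}})$ has weight $\leq w$ and all entries in $\{1,\dots,q\}$, so $\Init(\fn)$ ranges over a finite set; fix an enumeration of that set which is strictly decreasing for the lexicographic order and let $\nu(\fn)\in\ZZ_{\geq0}$ be the rank of $\Init(\fn)$ in it. Put
\[
\mu(\fn):=\bigl(\,w-\dep(\fn),\ \nu(\fn),\ \kappa(\fn)\,\bigr)\in\ZZ_{\geq0}^{3},
\]
where $\kappa(\fn)$ is the leading entry of $\fn'$ if $\fn'\neq\emptyset$ and $\kappa(\fn):=0$ otherwise, and order $\ZZ_{\geq0}^{3}$ lexicographically (a well-founded order).

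The crux is the claim that $\mu(\fn)<_{\mathrm{lex}}\mu(\fs)$ for every $\fs\in\cI_{w}\setminus\IT$ and every $\fn\in\Supp(\sU^{\bullet}([\fs]))$, which I would read off from Lemma~\ref{lemma-123} using $\sU^{\bullet}([\fs])=\sU^{\bullet}_{1}([\fs])+\sU^{\bullet}_{2}([\fs])+\sU^{\bullet}_{3}([\fs])$. If $\fn\in\Supp(\sU^{\bullet}_{1}([\fs]))$ then $\dep(\fn)>\dep(\fs)$ by part~(1), so the first coordinate of $\mu$ strictly drops. If $\fn\in\Supp(\sU^{\bullet}_{2}([\fs]))$ then part~(2) gives $\dep(\fn)\geq\dep(\fs)$ and $\Init(\fn)>\Init(\fs)$, so the first coordinate does not increase while $\nu(\fn)<\nu(\fs)$. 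If $\fn\in\Supp(\sU^{\bullet}_{3}([\fs]))$ then part~(3) gives $\dep(\fn)\geq\dep(\fs)$ and $\Init(\fn)\geq\Init(\fs)$; when $\Init(\fn)>\Init(\fs)$ we conclude as before, and when $\Init(\fn)=\Init(\fs)$ we set $\ell_{1}:=\dep(\Init(\fs))=\dep(\Init(\fn))$, note that part~(3) supplies an entry $n_{\ell_{1}+1}$ of $\fn$ with $1\leq n_{\ell_{1}+1}<s_{\ell_{1}+1}$, observe that since $\Init(\fn)$ has length exactly $\ell_{1}$ this $n_{\ell_{1}+1}$ is the leading entry of $\fn'$ (in particular $\fn'\neq\emptyset$ and $n_{\ell_{1}+1}>q$), and conclude $\kappa(\fn)=n_{\ell_{1}+1}<s_{\ell_{1}+1}=\kappa(\fs)$, the last equality because $\sU^{\bullet}_{3}([\fs])\neq0$ forces $\fs'\neq\emptyset$. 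In every case $\mu$ strictly decreases. Granting this, the theorem follows by well-founded induction on $\mu(\fs)$: if $\fs\in\IT$ take $e=0$ (here $\sU^{\bullet}([\fs])=[\fs]$); if $\fs\notin\IT$, the claim gives $\mu(\fn)<\mu(\fs)$ for each $\fn\in\Supp(\sU^{\bullet}([\fs]))$, so the inductive hypothesis yields $e_{\fn}$ with $\Supp((\sU^{\bullet})^{e_{\fn}}([\fn]))\subset\IT$, and $e:=1+\max_{\fn}e_{\fn}$ works since $(\sU^{\bullet})^{e}([\fs])$ is a $k$-linear combination of the $(\sU^{\bullet})^{e-1}([\fn])$, each supported in $\IT$ because $e-1\geq e_{\fn}$ and $\sU^{\bullet}$ fixes anything supported in $\IT$.

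I expect the only delicate point — the main obstacle — to be the $\sU^{\bullet}_{3}$ case: verifying that when $\Init$ is unchanged the strict decrease is genuinely recorded by $\kappa$. This rests on the length of $\Init$ being preserved, so that the entry $n_{\ell_{1}+1}$ furnished by Lemma~\ref{lemma-123}(3) is forced to be the (necessarily $>q$) leading entry of $\fn'$, making $\kappa$ well-defined and strictly smaller. All the other verifications are a mechanical translation of Lemma~\ref{lemma-123} into the language of $\mu$.
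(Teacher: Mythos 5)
Your proposal is correct and follows essentially the same route as the paper: both arguments invoke Lemma~\ref{lemma-123} to show that on each $\fs\notin\IT$ the triple (depth, $\Init$, entry following $\Init$) moves strictly in one direction within a finite ordered set, and conclude that iterating $\sU^{\bullet}$ terminates in $\cH^{\rT}$; you have merely reversed the order so the measure decreases, made the reduction to a single generator and the ``once in $\IT$, stays in $\IT$'' step explicit, and phrased the conclusion as a well-founded induction rather than as the finiteness of an increasing chain.
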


\begin{proof}
We may assume that $P = \fs = (s_{1}, \ldots) \in \cI_{w}$ for some $w \geq 0$.
By Lemma \ref{lemma-123}, for each index $\fn = (n_{1}, \ldots) \in \Supp(\sUB(\fs))$ one of the following conditions holds:
\begin{enumerate}
\item $\fs \in \ITw$ (and hence $\fn = \fs$);
\item $\fs \notin \ITw$, $\dep(\fn) > \dep(\fs)$;
\item $\fs \notin \ITw$, $\dep(\fn) = \dep(\fs)$, \ $\Init(\fn) > \Init(\fs)$ (using lexicographical order);
\item $\fs \notin \ITw$, $\dep(\fn) = \dep(\fs)$, \ $\Init(\fn) = \Init(\fs)$, \ $n_{\dep(\Init(\fn)) + 1} < s_{\dep(\Init(\fs)) + 1}$.
\end{enumerate}
This means that $\fn = \fs \in \ITw$ or
\begin{align*}
(\dep(\fn); \Init(\fn); - n_{\dep(\Init(\fn)) + 1}) > (\dep(\fs); \Init(\fs); - s_{\dep(\Init(\fs)) + 1})
\end{align*}
in $\{ 0, 1, \ldots, w \} \times \Init(\cI_{w}) \times \{ - w, \ldots, - 2, - 1 \}$ with the lexicographical order.
Here, when $\Init(\fs) = \fs$ (resp.\ $\Init(\fn) = \fn$), temporary we put $s_{\dep(\Init(\fs)) + 1} := 1$ (resp.\ $n_{\dep(\Init(\fn)) + 1} := 1$).
Since this totally ordered set is finite, this procedure will stop after a finite number of steps.
\end{proof}

\end{document}